\documentclass{amsart}
\usepackage{amssymb}
\usepackage{amsfonts}
\usepackage{amssymb}
\usepackage{amsmath}
\usepackage{amsthm}
\usepackage{enumerate}
\usepackage{tabularx}
\usepackage{centernot}
\usepackage{mathtools}
\usepackage{stmaryrd}
\usepackage{amsthm,amssymb}
\usepackage{etoolbox}
\usepackage{amssymb}
\usepackage{amsfonts}
\usepackage{amssymb}
\usepackage{amsmath}
\usepackage{amsthm}
\usepackage{enumerate}
\usepackage{pdfpages}
\usepackage{tikz}
\usepackage{amsmath}
\usepackage{amssymb}
\usetikzlibrary{arrows.meta}
\usepackage{tabularx}
\usepackage{centernot}
\usepackage{dutchcal}
\usepackage{mathtools}
\usepackage{stmaryrd}
\usepackage{amsthm,amssymb}
\usepackage{etoolbox,color}
\usepackage{tikz}
\usepackage{amssymb}
\usetikzlibrary{matrix}
\usepackage{tikz-cd}
\usepackage{tikz}
\usepackage{afterpage}
\usepackage{url}
\usepackage{tikz}
\usepackage{amssymb}
\usetikzlibrary{matrix}
\usepackage{tikz-cd}
\usepackage{tikz}
\usepackage{marginnote}
\definecolor{mygray}{gray}{0.85}
\usepackage[backgroundcolor=mygray,colorinlistoftodos,prependcaption,textsize=small]{todonotes}
\usepackage{xargs}       
\usepackage{float}
\usepackage{graphics}
\usepackage{relsize}

 \tikzcdset{scale cd/.style={every label/.append style={scale=#1},
 		cells={nodes={scale=#1}}}}
 
 \usepackage[colorlinks,citecolor=blue,urlcolor=blue, linkcolor=blue]{hyperref}

\usepackage[nocompress]{cite}
\newcommand{\mrm}[1]{\mathrm{#1}}

\renewcommand{\leq}{\leqslant}
\renewcommand{\geq}{\geqslant}

\newcommand{\fleq}{\leqslant_{\ast}^{\mrm{ff}}}

\newcommand{\bigast}{\mathop{\LARGE  \mathlarger{\mathlarger{*}}}}

\newcommand{\type}{\mathrm{tp}}

\makeatletter
\def\subsection{\@startsection{subsection}{3}%
  \z@{.5\linespacing\@plus.7\linespacing}{.3\linespacing}%
  {\bfseries\centering}}
\makeatother

\makeatletter
\def\subsubsection{\@startsection{subsubsection}{3}%
  \z@{.5\linespacing\@plus.7\linespacing}{.3\linespacing}%
  {\centering}}
\makeatother

\makeatletter
\def\myfnt{\ifx\protect\@typeset@protect\expandafter\footnote\else\expandafter\@gobble\fi}
\makeatother

\renewcommand{\restriction}{ {\upharpoonright} }

\def\Ind{\setbox0=\hbox{$x$}\kern\wd0\hbox to 0pt{\hss$\mid$\hss}
	\lower.9\ht0\hbox to 0pt{\hss$\smile$\hss}\kern\wd0}
\def\Notind{\setbox0=\hbox{$x$}\kern\wd0\hbox to 0pt{\mathchardef
		\nn=12854\hss$\nn$\kern1.4\wd0\hss}\hbox to
	0pt{\hss$\mid$\hss}\lower.9\ht0 \hbox to 0pt{\hss$\smile$\hss}\kern\wd0}

\def\ind{\mathop{\mathpalette\Ind{}}}

\newtheorem{theorem}{Theorem}[section]

\theoremstyle{definition}

\newtheorem{corollary}[theorem]{Corollary}
\newtheorem{definition}[theorem]{Definition}
\newtheorem{lemma}[theorem]{Lemma}
\newtheorem{proposition}[theorem]{Proposition}

\newtheorem{question}[theorem]{Question}
\newtheorem{observation}[theorem]{Observation}
\newtheorem{fact}[theorem]{Fact}
\newtheorem{context}[theorem]{Context}

\newtheorem{remark}[theorem]{Remark}
\newtheorem*{theorem*}{Main Theorem 1}
\newtheorem*{fact*}{Fact}
\newtheorem*{theorem2*}{Main Theorem 2}
\newtheorem*{theorem3*}{Main Theorem 3}

\newtheorem{claim}[theorem]{Claim}
\newtheorem{notation}[theorem]{Notation}

\newtheorem{construction}[theorem]{Construction}

\usepackage[backgroundcolor=mygray,colorinlistoftodos,prependcaption,textsize=small]{todonotes}
\usepackage{xcolor}

\newcommand{\pureindep}[1][]{%
	\mathrel{
		\mathop{
			\vcenter{
				\hbox{\oalign{\noalign{\kern-.3ex}\hfil$\vert$\hfil\cr
						\noalign{\kern-.7ex}
						$\smile$\cr\noalign{\kern-.3ex}}}
			}
		}\displaylimits_{#1}
	}
}

\newcommand{\indep}[2]{%
	\mathrel{
		\mathop{
			\vcenter{
				\hbox{%
					\oalign{
						\noalign{\kern-.3ex}\hfil$\vert$\hfil\cr
						\noalign{\kern-.7ex}
						$\smile$\cr\noalign{\kern-.3ex}
					}
				}
			}
		}^{\!\!\!\!\!#2}_{\!\!\hspace{-0.1em}#1}
	}
}

\newcommand{\displayindep}[2]{%
	\mathrel{
		\mathop{
			\vcenter{
				\hbox{%
					\oalign{
						\noalign{\kern-.3ex}\hfil$\vert$\hfil\cr
						\noalign{\kern-.7ex}
						$\smile$\cr\noalign{\kern-.3ex}
					}
				}
			}
		}^{\!\!\hspace{-0.1em}#2}_{\!\!\hspace{-0.1em}#1}
	}
}

\newcommand{\displayfindep}[2]{%
	\mathrel{
		\mathop{
			\vcenter{
				\hbox{%
					\oalign{
						\noalign{\kern-.3ex}\hfil$\vert$\hfil\cr
						\noalign{\kern-.7ex}
						$\smile$\cr\noalign{\kern-.3ex} 
					}
				}
			}
		}^{\!\hspace{-0.14em}#2}_{\!\!\hspace{-0.05em}#1}
	}
}

\newcommand{\astindep}[1][]{\indep{#1}{*}}

\newcommand{\free}{\mathcal{F}_\mathbf{V}}
\newcommand{\freeinf}{\mathcal{F}^{\infty}_\mathbf{V}}

\newcommand{\freefg}{\mathcal{F}^{<\infty}_\mathbf{V}}

\newtheoremstyle{TheoremNum}
{\topsep}{\topsep}              
{\itshape}                      
{}                              
{\bfseries}                     
{.}                             
{ }                             
{\thmname{#1}\thmnote{ \bfseries #3}}
\theoremstyle{TheoremNum}
\newtheorem{thmn}{Theorem}

\newtheoremstyle{DefNum}
{\topsep}{\topsep}              
{}                      
{}                              
{\bfseries}                     
{.}                             
{ }                             
{\thmname{#1}\thmnote{ \bfseries #3}}
\theoremstyle{DefNum}
\newtheorem{defn}{Definition}

\newtheoremstyle{CorNum}
{\topsep}{\topsep}              
{}                      
{}                              
{\bfseries}                     
{.}                             
{ }                             
{\thmname{#1}\thmnote{ \bfseries #3}}
\theoremstyle{CorNum}
\newtheorem{corl}{Corollary}

\begin{document}

\begin{abstract} 
We investigate the relationship between the Eklof-Mekler-Shelah Construction Principle for a variety of algebras $\mathbf{V}$ and the question of superstability of the free objects in $\mathbf{V}$, denoted as $\free$. We consider this question in the general setting of $\mrm{AEC}$-coverings of $\free$, with applications to first-order logic and beyond. Our main result is that if a strong form of the Construction Principle is satisfied, then almost all $\mrm{AEC}$-covering of $\free$ are unsuperstable. Concrete applications to $R$-modules and varieties of groups are also considered.
\end{abstract}

\title[Construction principle and superstability]{The construction principle and superstability of free objects in varieties of algebras}

\thanks{The authors were supported by project PRIN 2022 ``Models, sets and classifications", prot. 2022TECZJA. The second listed author was also supported by INdAM Project 2024 (Consolidator grant) ``Groups, Crystals and Classifications''. The third listed author was also supported by an INdAM post-doc grant.}

\author{Tapani Hyttinen}
\author{Gianluca Paolini}
\author{Davide E. Quadrellaro}

\address{Department of Mathematics and Statistics, University of Helsinki, P.O. Box 68 (Pietari Kalmin katu 5), 00014 Helsinki, Finland}
\email{tapani.hyttinen@helsinki.fi}

\address{Department of Mathematics “Giuseppe Peano”,
	University of Torino,
	Via Carlo Alberto 10,
	10123 Torino, Italy.}
\email{gianluca.paolini@unito.it}
\email{davideemilio.quadrellaro@unito.it}

\address{Istituto Nazionale di Alta Matematica ``Francesco Severi'',
	Piazzale Aldo Moro 5
	00185 Roma, Italy.}
\email{quadrellaro@altamatematica.it}

\subjclass[2020]{08B20, 03C48, 03C05}

\date{\today}
\maketitle

\section{Introduction}\label{sec:introduction}

One of the main dividing lines in Shelah's Classification Theory \cite{Sh} is the notion of superstability, this is a tameness notion related to the number of first-order types, which admits many equivalent definitions. It is easy to see that free abelian groups of infinite rank are not superstable. Similarly, it was proved by Gibone in his thesis (cf.~\cite[Cor.~1.8]{Baudisch}) and by Poizat \cite{Poizat} that non-abelian free groups are not superstable\footnote{Much more general proofs of this fact are now known, see e.g. \cite{dahmani,houcine,MPS22}.}. All this motivated us to look at the following question:
	
	\begin{question}\label{first_question} Let $\mathbf{V}$ be a variety of algebras (in the sense of universal algebra). When is the first-order theory of the $\mathbf{V}$-free objects of infinite rank superstable?
\end{question}
	
	The restriction to objects of infinite rank in \ref{first_question} is simply to ensure that all these structures are elementary equivalent (in first-order logic), as e.g. free abelian groups of different finite ranks are not elementary equivalent. We introduce a little bit of notation. Let $\mathbf{V}$ be a variety of algebras (in the sense of universal algebra). We denote by $\free$ (resp. $\freeinf$) the free objects in $\mathbf{V}$ (resp. the free objects of infinite rank in $\mathbf{V}$).  For $A, B \in \free$ we write $A \fleq B$ if there is $C \in \free$ such that $A \ast C = B$.  Although our superstability question was phrased in terms of first-order logic, it admits a natural generalisation to abstract elementary classes (AECs) \cite{Sh2}. AECs are one of the most general frameworks designed to develop model theory beyond first-order logic. This leads us to the following notion of independent interest.

	\begin{definition}\label{def:AEC_covering} We say that $(\mathcal{K}, \preccurlyeq)$ is an \emph{$\mrm{AEC}$-covering} of $(\free, \fleq)$ when the following conditions are satisfied:
	\begin{enumerate}[(1)]
	\item $\freeinf\subseteq \mathcal{K}$;
	\item $(\mathcal{K}, \preccurlyeq)$ is an $\mrm{AEC}$;
	\item for $A, B \in \free\cap \mathcal{K}$, $A \fleq B$ implies $A \preccurlyeq B$;
	\item $(\mathcal{K}, \preccurlyeq)$ has $\mrm{AP}$ and $\mrm{JEP}$.
		\end{enumerate}
\end{definition}

	Notice that, letting $T$ be the first-order theory of $\freeinf$ (which is a complete theory) we have that $(\mrm{Mod}(T), \preccurlyeq_{\omega, \omega})$ is an AEC covering of $(\free, \fleq)$, where here $\preccurlyeq_{\omega, \omega}$ denotes the relation of elementary substructure (in first-order logic). Interestingly, in this setting we can ask the following more general variant of Question~\ref{first_question}.
	
	\begin{question}\label{second_question} Let $\mathbf{V}$ be a variety of algebras and let $(\mathcal{K}, \preccurlyeq)$ be an $\mrm{AEC}$-covering of $(\free, \fleq)$. When is it the case that $(\mathcal{K}, \preccurlyeq)$ is superstable?
\end{question}
A few words of explanations are due here concerning Question~\ref{second_question}. In the context of AECs there are various notions of superstability. For AECs with amalgamation, joint embedding and arbitrarily large models (a commonly studied framework) it was shown in \cite{grossberg} that all these alternative definitions of superstability are equivalent under the assumption of \emph{tameness}, which in the light of \cite{boney} can be seen essentially as a set-theoretic assumption. We stress that, in our main result \ref{main_th1}, we refer to the classical notion of superstability in terms of the number of types, and so such set-theoretic assumptions are not necessary for our purposes. 

Answering questions on $\mathbf{V}$-free objects for all varieties of algebras $\mathbf{V}$ is notoriously difficult. One landmark result in the logic literature, due to Eklof, Mekler and Shelah, is the following tripartite result, which is \cite[Theorem~16]{MSh} and builds on \cite{EM} (see also the related references \cite{EM2,mekler,mekler2,MSh,MSh3} on this topic).
	
	\begin{fact}[Eklof-Mekler-Shelah]\label{recap_fact}
	For any variety of algebras $\mathbf{V}$ in a countable language exactly one of the following possibilities holds:
	\begin{enumerate}[(i)]
		\item there is an $\mathfrak{L}_{\infty,\omega_1}$-free algebra of cardinality $\aleph_1$ which is not free;
		\item every $\mathfrak{L}_{\infty,\omega_1}$-free algebra of cardinality $\aleph_1$ is free, and for every infinite cardinal $\kappa$ there is an $\mathfrak{L}_{\infty,\kappa}$-free algebra of cardinality $\kappa^+$ which is not free;
		\item every $\mathfrak{L}_{\infty,\omega}$-free algebra is free.
	\end{enumerate}
\end{fact}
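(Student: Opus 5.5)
The plan is to make the trichotomy hinge on a single dividing property, the \emph{Construction Principle} (CP) of Eklof and Mekler. CP says roughly: there are countable free algebras $H \subseteq K \subseteq L$ such that $H \fleq L$ and $K/H$-type data is not free over $H$ in $L$ (that is, $H$ is not a free factor of $K$), $K$ is not free over $H$, and any finite subset of $L$ lies in a free factor of $L$ containing $H$. I would split into two cases according to whether CP holds.

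\emph{Case 1: CP holds.} I would construct a non-free $\mathfrak{L}_{\infty,\omega_1}$-free algebra of size $\aleph_1$. Fix a stationary costationary set $E\subseteq\omega_1$ of limit ordinals. Build a continuous chain $(A_\alpha)_{\alpha<\omega_1}$ of countable free algebras such that $A_\alpha \fleq A_{\alpha+1}$ when $\alpha\notin E$. For $\alpha\in E$, glue in a copy of the CP triple so that $A_\alpha$ is not a free factor of $A_{\alpha+1}$, though every finite subset is still absorbed by free factors. Let $A=\bigcup_\alpha A_\alpha$. Non-freeness follows by Fodor's lemma applied to a putative basis, since freeness would force $A_\alpha \fleq A$ on a club, which meets $E$. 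For $\mathfrak{L}_{\infty,\omega_1}$-freeness, I would exhibit a back-and-forth system of countable free subalgebras, namely those $\aleph_1$-pure in the chain, and use Kueker's characterization. So (i) holds.

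\emph{Case 2: CP fails.} First I would show every $\mathfrak{L}_{\infty,\omega_1}$-free algebra of size $\aleph_1$ is free. Write it as a continuous union of countable free subalgebras. The failure of CP lets one refine the filtration on a club so that successive terms are free factors, and the union of such a chain is free. This gives the first half of (ii), and it excludes (i); exclusivity of (i) against (ii) and (iii) is then immediate.

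Now split on whether every $\mathfrak{L}_{\infty,\omega}$-free algebra is free. If yes, we are in (iii). If not, take a countable $\mathfrak{L}_{\infty,\omega}$-free non-free algebra, or more generally a witness to non-freeness; by the failure of CP it can be taken of size $\aleph_1$, giving "$\aleph_1$-free but not free". Then lift by induction on $\kappa$ using Shelah's compactness-type constructions: from a $\kappa$-free non-free algebra build a $\kappa^+$-free non-free one of size $\kappa^+$. This is the Eklof–Mekler combinatorial step with a non-reflecting stationary subset of $\kappa^+\cap\mathrm{cof}(\kappa)$ in $L$, or ladder systems, as in the abelian group case. Upgrading $\kappa$-free to $\mathfrak{L}_{\infty,\kappa}$-free uses Kueker again. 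Finally, (ii) and (iii) are mutually exclusive, since an $\mathfrak{L}_{\infty,\kappa}$-free algebra is $\mathfrak{L}_{\infty,\omega}$-free.

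The main obstacle is the transfinite lifting in Case 2. In a general variety, "free over a subalgebra" and the $\fleq$-quotient behave badly compared with modules, so the induction must be carried out purely in terms of $\fleq$-chains. For that step I would rely on Shelah's singular compactness to handle the singular cardinals $\kappa^+$-limits.
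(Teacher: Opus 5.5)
The paper gives no proof of this Fact. It is quoted from \cite[Theorem~16]{MSh}, building on \cite{EM}, so the comparison is with that literature. Your Case~1 and the first half of Case~2 follow the right outline: the equivalence of (i) with CP goes back to Eklof--Mekler, modulo your formulation of CP (see the second paragraph). The genuine gap is the remaining implication: that failure of CP together with failure of (iii) yields the second half of (ii). This is the Mekler--Shelah content, and your steps there are wrong rather than merely sketchy.

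\textbf{The second half of Case~2.} There are four problems.

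First, there is no countable non-free $\mathfrak{L}_{\infty,\omega}$-free algebra. Free algebras of finite rank are countable, and all free algebras of infinite rank are $\mathfrak{L}_{\infty,\omega}$-equivalent (use the isomorphisms induced by bijections between finite sets of free generators). Countable $\mathfrak{L}_{\infty,\omega}$-equivalent structures are isomorphic. So in a countable language every countable $\mathfrak{L}_{\infty,\omega}$-free algebra is free.

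Second, passing from an arbitrary non-free $\mathfrak{L}_{\infty,\omega}$-free algebra to one of cardinality $\aleph_1$ is a genuine compactness result. It does not follow from the failure of CP.

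Third, your targets contradict what you have just proved. Both your ``$\aleph_1$-free but not free'' witness of size $\aleph_1$ and your inductive step at $\kappa=\omega$ would give, after the Kueker upgrade, a non-free $\mathfrak{L}_{\infty,\omega_1}$-free algebra of size $\aleph_1$. You showed one line earlier that no such algebra exists in Case~2. The step is also off by one: (ii) asks for $\mathfrak{L}_{\infty,\kappa}$-free algebras of cardinality $\kappa^+$, not $\kappa^+$-free ones.

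Fourth, the tools you name cannot give a ZFC proof:
\begin{itemize}
\item stationary sets taken from $L$ give only consistency;
\item your induction has no input at limit cardinals;
\item singular compactness is a freeness theorem (its natural use is to rule out singular size for a minimal non-free $\mathfrak{L}_{\infty,\omega}$-free algebra), not a way to build non-free algebras, and the cardinals $\kappa^+$ in (ii) are regular anyway.
\end{itemize}
What is missing is the real idea. You need an analysis of non-free $\mathfrak{L}_{\infty,\omega}$-free algebras when CP fails that produces an algebraic configuration. From that configuration one must then build, in ZFC and for every $\kappa$ including singular ones, a non-free $\mathfrak{L}_{\infty,\kappa}$-free algebra of size $\kappa^+$.

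\textbf{Your formulation of CP.} This also has to be replaced. Suppose $L = H \ast M$ with $M$ free and $H \subseteq K \subseteq L$. Then $H$ is automatically a free factor of $K$ for groups (by the Kurosh subgroup theorem) and for abelian groups (since $K = H \oplus (K\cap M)$ by the modular law). So your CP fails in the two basic varieties that lie in case (i). Your Case~2 would then ``prove'' that every $\mathfrak{L}_{\infty,\omega_1}$-free abelian group of cardinality $\aleph_1$ is free, which is false. Your finite-subset clause is vacuous, since $H \ast \langle \text{finitely many free generators of } M\rangle$ always works.

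The clause that drives both directions is the one in the paper's formulation: $A=\bigcup_i A_i$ with each finitely generated $A_i \fleq B$, but $A \not\fleq B \ast C$ for every free $C$. It does three jobs:
\begin{itemize}
\item It keeps the good stages below a bad $\alpha\in E$ free factors of $A_{\alpha+1}$, which your back-and-forth system needs.
\item It makes the Fodor argument work in a general variety. Freeness of the union only gives $A_\alpha \fleq A = A_{\alpha+1}\ast C$ (with $A_{\alpha+1}\fleq A$ by construction). So you need $A_\alpha \not\fleq A_{\alpha+1}\ast C$, not merely $A_\alpha\not\fleq A_{\alpha+1}$.
\item It is exactly the configuration one extracts at a stationary set of bad points in the first half of Case~2.
\end{itemize}
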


The theorem above concerns the question of axiomatisability of $\mathbf{V}$-free objects in infinitary languages, but behind \ref{recap_fact}(i) there is actually a combinatorial pattern satisfied by the $\mathbf{V}$-free objects of countable rank that goes under the name of \emph{Construction Principle} (CP) (first isolated in \cite{EM}).\footnote{A new look at the Construction Principle was recently given by the authors of the present paper in \cite{HPQ}, but as this is the not strictly related to the superstability question studied here we will not dwell further on the framework and the results from \cite{HPQ}.} Our way of attacking Questions~\ref{first_question} and \ref{second_question} is to approach the problem through the following more specific question.
	
\begin{question} How does the Construction Principle relate to the problem of superstability of $\mathbf{V}$-free objects (in the sense of Questions~\ref{first_question} and \ref{second_question})?
\end{question} 

	We will see that this way of looking at the problem yields strong non-superstability results, which connect to classical results on first-order logic, as well as recent results on AECs of $R$-modules from works of Mazari-Armida \cite{armida1,armida3,armida4}.

\subsection{The first venue}\label{intro:first_venue}
	
The main result of the paper asks for the satisfaction of a slight strengthening of the Construction Principle, which we will term the \emph{Reinforced Construction Principle} (RCP). To avoid technical complications, we introduce the RCP only for varieties in a countable language, although it is possible to extend its statement and also our results to the general setting of languages of arbitrary size.	
	
\begin{definition}\label{RCP_intro}\label{RCP_proof}
	Let $\mathbf{V}$ be a countable variety of algebras. We say that $\mathbf{V}$ satisfies the \emph{Reinforced Construction Principle $(\mrm{RCP})$} if the following conditions are met:
	\begin{enumerate}
		\item[$ (\mrm{RCP}) $] there are countable $A, B \in \freeinf$ with $A \leq B$ such that:
		\begin{enumerate}[(a)]
			\item $A$ is freely generated by $\{a_i : i < \omega\}$, and for every $i < \omega$, letting $A_i = \langle a_j : j \leq i \rangle_B$, we have that $A_i \fleq B_i$;
			\item $A \not \fleq B \ast C$, for every $C \in \free$;
			\item $B$ is freely generated by $\{b_i : i < \omega\}$ and $\mrm{dcl}^{\mrm{qf}}_B(Ab_0) = B$ (where the operator $\mrm{dcl}^{\mrm{qf}}$ denotes the quantifier-free definable closure).
		\end{enumerate}
	\end{enumerate}
\end{definition}

The classical Construction Principle CP is obtained from RCP by simply dropping the third condition  ``$\mrm{dcl}^{\mrm{qf}}_B(Ab_0) = B$''. Crucially, in concrete cases it often happens that when CP holds, then RCP holds as well; we will see that this is the case for many varieties of groups and modules. Our main result is the following.

\begin{theorem}\label{main_th1} Let $\mathbf{V}$ be a countable variety of algebras which satisfies $\mrm{RCP}$ (cf. \ref{RCP_intro}) and let $(\mathcal{K}, \preccurlyeq)$ be a strong enough (cf. \ref{def_enough_strong}) $\mrm{AEC}$-covering of $(\free, \fleq)$. Then $(\mathcal{K}, \preccurlyeq)$ is not superstable (i.e., $(\mathcal{K}, \preccurlyeq)$ is not $\lambda$-stable on a tail of cardinals).
\end{theorem}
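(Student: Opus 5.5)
The plan is to show that for every infinite cardinal $\lambda$ of countable cofinality, or at least for unboundedly many $\lambda$, there is a model $M\in\mathcal{K}$ of size $\lambda$ over which there are more than $\lambda$ Galois types. This refutes $\lambda$-stability on any tail of cardinals. The strategy is the classical one for unsuperstability: build a tree of height $\omega$ of "partial obstructions" and realise $\lambda^{\aleph_0}>\lambda$ distinct types, one for each branch.

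Fix $\lambda$ with $\lambda^{\aleph_0}>\lambda$, for instance $\lambda=\beth_\omega(\mu)$ for any $\mu$; such cardinals are unbounded. Let $A,B$ witness RCP, with $A$ freely generated by $\{a_i\}$ and $B$ freely generated by $\{b_i\}$. Let $F$ be the free algebra on generators $\{x_s : s\in \lambda^{<\omega}\setminus\{\emptyset\}\}$ together with $\lambda$ extra generators, so that $F\in\freeinf$ and $|F|=\lambda$. For each branch $\eta\in\lambda^\omega$, map $a_i\mapsto x_{\eta\restriction(i+1)}$; this is an embedding of $A$ onto $A_\eta:=\langle x_{\eta\restriction (i+1)}: i<\omega\rangle_F$. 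Since each $\{x_{\eta\restriction(i+1)}:i\le n\}$ is a subset of a free basis, $A_\eta\restriction n\fleq F$.

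Now form $F_\eta := F\ast_{A_\eta} B$. This is the pushout amalgamating $A\le B$ along the copy $A_\eta\le F$. Using RCP(a), namely $A_i\fleq B_i$, together with the fact that the $A_\eta$ "branch off" the free basis of $F$, I expect that $F_\eta$ is free and $F\fleq F_\eta$ fails. More precisely, $F\le F_\eta$, and $F_\eta$ is a union of a chain of free algebras each $\fleq$ the next. Hence $F_\eta\in\freeinf$, and the chain witnesses that $F\preccurlyeq F_\eta$. This is where "strong enough" (Definition~\ref{def_enough_strong}) must enter: presumably it guarantees that such embeddings are $\preccurlyeq$-embeddings, or that $\preccurlyeq$ among free objects is implied by something weaker than $\fleq$. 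Let $c_\eta$ be the image of $b_0$ in $F_\eta$. By RCP(c), $F_\eta=\mathrm{dcl}^{\mathrm{qf}}(F c_\eta)$, so the Galois type of $c_\eta$ over $F$ determines the isomorphism type of $(F_\eta,F)$ over $F$.

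Distinct types come from the following claim: if $\eta\ne\nu$ and $\mathrm{gtp}(c_\eta/F)=\mathrm{gtp}(c_\nu/F)$, then there are $N\in\mathcal{K}$ and an embedding with $F_\eta,F_\nu\le N$ fixing $F$ and identifying $c_\eta$ with $c_\nu$. By qf-definable closure, this gives an isomorphism $F_\eta\cong F_\nu$ over $F$, so the defining equations of $c_\eta$ over $A_\eta$ also hold over $A_\nu$. Take $i$ with $\eta\restriction (i+1)\ne\nu\restriction(i+1)$. Then the qf-formulas tying $c_\eta$ to $x_{\eta\restriction j}$ for $j>i$ force the copy of $A$ inside $F$ along $\eta$ to be generated inside $\langle F',c\rangle$, where $F'$ is a free factor of $F$ missing those basis elements. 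From this, using freeness, I aim to derive $A\fleq B\ast C$ for some $C\in\free$, contradicting RCP(b). This yields $\lambda^{\aleph_0}$ types over a model of size $\lambda$.

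The main obstacle is the last step: turning "same Galois type" into a free-product decomposition contradicting RCP(b). Galois types are computed in some amalgam $N$ that need not be free. My first approach is to use the strong-enough hypothesis to reduce to $N$ free, or to compare inside $F_\eta\ast_F F_\nu$ directly, and then apply a Kurosh/retraction argument. The retraction would kill the basis elements $x_s$ not on $\eta$, which maps $F_\nu$ into a free algebra and exhibits $A$ as a free factor of $B\ast C$. A secondary worry is verifying that $F_\eta$ really lies in $\mathcal{K}$ with $F\preccurlyeq F_\eta$.
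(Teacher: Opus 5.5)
Your construction is the paper's. The branches of $\lambda^{<\omega}$ are the almost disjoint family of countable sets used there. Your $F_\eta=F\ast_{A_\eta}B$ is the paper's $B^X$, namely $B^{(\eta)}\ast G_\eta$, where $B^{(\eta)}$ is the copy of $B$ glued along $A_\eta$ and $G_\eta$ is free on the basis elements of $F$ off the branch $\eta$.

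\textbf{The gap.} It lies exactly at the step you flag: you never get from ``same Galois type'' to a violation of RCP(b). The route you sketch (tracing quantifier-free formulas linking $c_\eta$ to the $x_{\eta\restriction j}$, Kurosh, retractions that ``kill'' off-branch generators) is not carried out. It is also not available in general, since an arbitrary variety has no distinguished element to send generators to.

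\textbf{The missing idea.} It is a short free-factor computation that uses almost disjointness directly. Let $k$ be least with $\eta(k)\neq\nu(k)$.
\begin{itemize}
\item The generators $x_{\eta\restriction(j+1)}$ with $j<k$ generate the $\nu$-copy of $A_{k-1}$ (empty if $k=0$). By RCP(a) this is a free factor of $B^{(\nu)}$.
\item For $j\geq k$, the $x_{\eta\restriction(j+1)}$ are part of the free basis of $G_\nu$.
\end{itemize}
Hence $F_\nu=A_\eta\ast D\ast E$ for suitable free $D,E$, i.e.\ $A_\eta\fleq F_\nu$. Now let $g\colon F_\nu\to F_\eta$ be an isomorphism fixing $F$ pointwise. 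Then $g(A_\eta)=A_\eta$, so $A_\eta\fleq F_\eta=B^{(\eta)}\ast G_\eta$ with $G_\eta\in\free$. This contradicts RCP(b). The whole argument takes place inside the free algebras $F_\nu$ and $F_\eta$, so your worry that the amalgam $N$ need not be free is beside the point.

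\textbf{Where ``strong enough'' enters.} You have misplaced it. Obtaining $g$ ``by qf-definable closure'' is precisely what Definition~\ref{def_enough_strong}(ii) is for. The closure $\mrm{dcl}^{\mrm{qf}}$ computed in $F_\eta$ can differ from the one computed in the ambient model, because a defining equation such as $ry=rx$ or $y^n=x^n$ may acquire further solutions there. What makes $f(F_\nu)=F_\eta$ follow from $f(c_\nu)=c_\eta$, for $f$ an automorphism of $\mathfrak{M}$ over $F$, is the equality $\mrm{dcl}^{\mrm{qf}}_{F_\eta}(Fc_\eta)=\mrm{dcl}^{\mrm{qf}}_{\mathfrak{M}}(Fc_\eta)$, and likewise for $\nu$.

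By contrast, $F\preccurlyeq F_\eta$ needs no extra hypothesis. The algebras $A_{\eta,n}\ast G_\eta$ lie in $\freeinf$, since $G_\eta$ has infinite rank. They form a $\fleq$-chain with union $F$ (it is $F$, not $F_\eta$, that is the union), and each is $\fleq F_\eta$ by RCP(a). So axiom (3) of an AEC-covering together with smoothness gives $F\preccurlyeq F_\eta$. You can then embed each $F_\eta$ strongly into $\mathfrak{M}$ over $F$ by universality and homogeneity.
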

	
In Theorem~\ref{main_th1} we have one further assumption which needs justification, i.e., that of ``$(\mathcal{K}, \preccurlyeq)$ being a strong enough AEC-covering of $(\free, \fleq)$''. This condition is a bit technical and we refer to \ref{def_enough_strong} for details, but we observe here the following. Firstly, this condition is trivially satisfied when the covering is $(\mrm{Mod}(T), \preccurlyeq_{\omega, \omega})$, for $T$ the first order theory of algebras in $\freeinf$. Secondly, and most importantly, we stress that {\em some assumptions on the submodel relation $\preccurlyeq$ are necessary}, since, for instance, the AEC of abelian groups with the relation of substructure is $\omega$-stable, despite the fact that the variety of abelian groups satisfies RCP (for more on this see Section~\ref{subsec1_intro}). Crucially, Theorem~\ref{main_th1} is easy to verify in practice, thereby allowing us to obtain some results concerning free objects in varieties of $R$-modules and groups (see Sections~\ref{subsec1_intro}-\ref{subsec2_intro}). We expect that our results will apply to other algebraic contexts. We stress that RCP is a completely algebraic statement on the variety and no knowledge of model theory is required to understand its content.

\subsubsection{$R$-modules}\label{subsec1_intro}

Obviously, given a ring $R$, the class of $R$-modules form a variety and so our setting naturally connects with the model theory of $R$-modules, and in particular with the study of AECs of $R$-modules, recently undertaken by Mazari-Armida and others with great success, see e.g. \cite{armida1,armida2,armida3,armida4,Armida_ros,Trlifaj,SaTr,Baldwin, boneyLmodule}. The most relevant result to our context is the following theorem by Mazari-Armida, from \cite{armida3}.
	
\begin{fact}[Mazari-Armida]\label{Armida_fact_nonsuper} Let $R$ be a ring, let $\mathcal{Flat}_R$ be the class of flat $R$-modules and let $\leq_{\mrm{pp}}$ be the relation of pure submodule. Then $(\mathcal{Flat}_R, \leq_{\mrm{pp}})$ is a superstable AEC if and only if $R$ is left-perfect.
\end{fact}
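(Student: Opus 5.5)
The plan is to work throughout with the standard presentation of $(\mathcal{Flat}_R, \leq_{\mrm{pp}})$ as an $\mrm{AEC}$ with amalgamation, joint embedding and no maximal models. In this setting Galois types over a model $M$ are determined by pp-types over $M$. The reason is that every flat module purely embeds into its flat cotorsion (hence pure-injective) envelope, and this envelope is again flat. Superstability will be read as $\lambda$-stability on a tail of cardinals, and the two directions of the equivalence are then handled separately through Bass's characterisation of left-perfect rings: $R$ is left perfect iff $R$ satisfies DCC on principal right ideals iff every flat left $R$-module is projective.

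For the direction ``left perfect $\Rightarrow$ superstable'', I would first use Bass's theorem to replace $\mathcal{Flat}_R$ by the class of projective modules. Over a left-perfect ring these are direct sums of the indecomposable projectives $Re$, with $e$ ranging over a finite set of primitive idempotents. The key step is then to show that the DCC on principal right ideals propagates to a descending chain condition on the pp-definable subgroups relevant to types realised in flat modules. Concretely, in a flat module the pp-definable subgroup $\varphi(M)$ is of the form $IM$ for a finitely generated right ideal $I$ (Lazard / flatness: $\exists \bar y\,(x=\bar a\bar y)$ type formulas dominate). Under this chain condition every pp-type over $M$ is determined by a finite fragment together with a parameter in $M$, which yields at most $|M|+|R|+\aleph_0$ types. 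Hence the class is $\lambda$-stable for all $\lambda\geq |R|+\aleph_0$.

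For the converse I argue by contraposition. If $R$ is not left perfect, Bass gives $a_1,a_2,\dots\in R$ with $a_1R\supsetneq a_1a_2R\supsetneq\cdots$. Bass's module $\varinjlim (R\xrightarrow{\cdot a_1}R\xrightarrow{\cdot a_2}\cdots)$ is then flat but not projective. Given $\lambda$ with $\lambda^{\aleph_0}>\lambda$ (e.g. $\mathrm{cf}\,\lambda=\omega$), take $M=R^{(\lambda)}$, which is free, hence flat. For each $\eta\in\lambda^{\omega}$ build a flat pure extension $N_\eta\supseteq M$, modelled on Bass's direct limit glued along $M$, containing an element $x_\eta$ with
\[
x_\eta - \textstyle\sum_{k<n} a_1\cdots a_k\, e_{\eta(k)} \in a_1\cdots a_n N_\eta \quad\text{for all } n,
\]
where the $e_\alpha$ are basis elements of $M$. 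The strictness of the chain will ensure that distinct branches yield incompatible pp-formulas over $M$. This gives $\lambda^{\aleph_0}>\lambda$ Galois types over a model of size $\lambda$, so stability fails on a cofinal class of cardinals and superstability fails.

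The main obstacle is verifying two things in this construction: that $M\leq_{\mrm{pp}} N_\eta$, and that $N_\eta$ is flat. I would try to realise $N_\eta$ as a countable direct limit of free modules $M\oplus R^{(n)}$ with split transition maps on $M$. Flatness would then follow from closure of flat modules under direct limits, and purity from $M$ being a direct summand at each stage.
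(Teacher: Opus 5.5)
The paper does not prove this fact; it is quoted from Mazari-Armida, whose argument goes through flat cotorsion envelopes. So your proposal has to be judged on its own merits.

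Your direction ``not left-perfect $\Rightarrow$ not superstable'' is correct. Concretely, take $N_\eta=\varinjlim_n(M\oplus Rg_n)$, with transition maps equal to the identity on $M$ and $g_n\mapsto e_{\eta(n)}+a_{n+1}g_{n+1}$. Then $N_\eta$ is flat. Also $M\leq_{\mathrm{pp}}N_\eta$, because $M$ is a compatible direct summand at every stage. The transition maps need not be injective if some $a_n$ is a zero-divisor, but nothing requires this. Finally $x_\eta:=g_0$ satisfies $x_\eta-s^\eta_n=a_1\cdots a_ng_n$, where $s^\eta_n=\sum_{k<n}a_1\cdots a_ke_{\eta(k)}$. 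Suppose $\eta,\nu$ first differ at $k$ and $x_\eta,x_\nu$ had the same Galois type over $M$. Purity of $M$ in the amalgam would give $a_1\cdots a_k(e_{\eta(k)}-e_{\nu(k)})\in a_1\cdots a_{k+1}M$, contradicting strictness of the chain; you should also arrange $a_1R\neq R$ to cover $k=0$. This is the counting pattern of Theorem~\ref{main_th1}, but it needs no zero-divisor hypothesis. Pure embeddings preserve the separating formulas $\exists y\,(x-s=a_1\cdots a_ny)$, whereas the paper, working with an arbitrary strong enough $\preccurlyeq$, has to control types through $\mathrm{dcl}^{\mathrm{qf}}$.

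The genuine gap is in ``left-perfect $\Rightarrow$ superstable'': the chain condition you want to extract from the DCC on principal right ideals does not hold. Let $R=k[x_i,y_i:i\geq1]/I$, with $I$ generated by all $x_ix_j$, all $y_iy_j$, all $x_iy_j$ for $i\neq j$, all $x_iy_i-x_1y_1$, and all monomials of degree $3$. Then $R$ is local with $J^3=0$, hence perfect. Yet the ideals $\mathrm{ann}(x_1,\dots,x_n)$ form a strictly descending chain of pp-definable subgroups of the free module $R$, witnessed by $y_{n+1}\in\mathrm{ann}(x_1,\dots,x_n)\setminus\mathrm{ann}(x_1,\dots,x_{n+1})$; so even $\mathrm{Th}(R^{(\omega)})$ is not superstable. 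Two further steps of yours fail in the same ring:
\begin{enumerate}
\item The formula $x_1x=0$ defines the non-finitely generated ideal $\mathrm{ann}(x_1)$, so ``$\varphi(M)=IM$ with $I$ finitely generated'' is false.
\item $R^{(\omega)}$ is flat and cotorsion (over a left-perfect ring flat modules are projective, so every module is cotorsion), but it is not pure-injective, since otherwise it would be $\Sigma$-pure-injective, contradicting the chain. So your justification that Galois types are pp-types is incorrect as written.
\end{enumerate}

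Superstability of the AEC comes from the restriction to flat extensions, via the idea you are missing. If $M\leq_{\mathrm{pp}}N$ with $N$ flat, then $N/M$ is flat, hence projective by Bass, so $N=M\oplus P$. Write $a=m+p$. Choose $P\oplus P'=F$ free, and a finitely generated free summand $R^{(n)}\leq F$ spanned by part of a basis and containing $p$. The maps from $M\oplus P$ and from $M\oplus R^{(n)}$ into $M\oplus F$ are pure embeddings over $M$. Hence $a\in N$ and $m+p\in M\oplus R^{(n)}$ have the same Galois type over $M$. This gives $|S(M)|\leq|M|+|R|+\aleph_0$ for every flat $M$, which is your claimed bound, obtained with no chain condition and without using that Galois types are pp-types.
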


Notice that, letting $\mathbf{V}_R$ be the variety of $R$-modules, obviously $(\mathcal{Flat}_R, \leq_{\mrm{pp}})$ is an AEC-covering of $(\mathcal{F}_{\mathbf{V}_R}, \fleq)$, and, as already observed by Mazari-Armida in \cite[Ex.~3.24]{armida1}, if the ring is not right-coherent, then this AEC-covering is not first-order. Furthermore, as well-known (see e.g. \cite{EM2}), $\mathbf{V}_R$ satisfies CP if and only if $R$ is not left-perfect. Thus Mazari-Armida's result fits perfectly within our setting and it is a clear example of the relationship between CP and non-superstability. We notice that Mazari-Armida's proof uses very specific module-theoretic tools, most notably the notion of cotorsion-envelope of an $R$-module. For this reason, his proof does not generalise to the abstract setting of an arbitrary variety of algebras. However, we will now see that our Theorem~\ref{main_th1} covers and strengthens a large portion of Mazari-Armida's result. In order to introduce our result we need the following definition.

	\begin{definition}\label{ring__def_perfect} \label{ring_fact} We say that a ring $R$ is \emph{weakly left-perfect} if for any sequence $(r_n : n < \omega) \in R^\omega$ such that, for every $m < \omega$, $r_0 \cdots r_m$ is not a zero-divisor, we have that the sequence of principal ideals $\{ r_0 \cdots r_mR : m < \omega\}$ eventually stabilises.
\end{definition}

Recall that a ring $R$ is \emph{left-perfect} if the condition from \ref{ring__def_perfect} holds without the extra assumption on $r_0 \cdots r_m$ not being zero-divisors. We will see in Section~\ref{sec_modules} that if $R$ is not weakly left-perfect, then $\mathbf{V}_R$ satisfies RCP, thus deducing the following.

\begin{corollary}\label{main_th} Let $R$ be a ring. If $R$ is not weakly left-perfect, then no strong enough  $\mrm{AEC}$-covering of $(\mathcal{F}_{\mathbf{V}_R}, \fleq)$ is superstable.
\end{corollary}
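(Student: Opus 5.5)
The plan is to reduce Corollary~\ref{main_th} to Theorem~\ref{main_th1}: it suffices to show that if $R$ is not weakly left-perfect, then $\mathbf{V}_R$ satisfies $\mrm{RCP}$ (Definition~\ref{RCP_intro}). Since $\mathbf{V}_R$ is a countable variety only when $R$ is countable, I would either work with the extension of RCP to languages of arbitrary size, or first pass to a countable subring $R_0$ containing a witnessing sequence. The second route would still require checking that the construction below transfers to $R$. I expect the main effort to be the verification of condition (b).

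Fix $(r_n : n<\omega)$ such that no product $r_0\cdots r_m$ is a zero-divisor and the chain $r_0\cdots r_m R$ does not stabilise. Passing to a subsequence by grouping consecutive factors, I may assume the chain is strictly decreasing at every step. Following the classical Eklof--Mekler construction for non-perfect rings, I would take $B$ free on $\{b_i : i<\omega\}$ and set
\[
a_i = b_i - r_i b_{i+1},
\]
with the scalars multiplying on the left or right according to the side convention needed to match the ideal condition.

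The verification then proceeds as follows.

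\textbf{Condition (c).} We have $b_{i+1}\in\mrm{dcl}^{\mrm{qf}}(Ab_0)$, because $r_i b_{i+1} = b_i - a_i$ and $r_i$ is not a zero-divisor, so $b_{i+1}$ is the unique solution of this equation. By induction every $b_i$ lies in $\mrm{dcl}^{\mrm{qf}}(Ab_0)$, so $\mrm{dcl}^{\mrm{qf}}_B(Ab_0)=B$. This is exactly where the non-zero-divisor hypothesis is used, and it explains why only weak left-perfectness appears.

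\textbf{Freeness of $A$.} The $a_i$ are linearly independent: in a nontrivial relation, the coefficient of $b_0$ forces the coefficient of $a_0$ to vanish, and one concludes by triangularity.

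\textbf{Condition (a).} The elements $a_0,\dots,a_i,b_{i+1},b_{i+2},\dots$ form a basis of $B$, by triangular substitution $b_j = a_j + r_j b_{j+1}$ for $j\le i$. Hence $A_i$ is a free factor of $B$. Here $A_i$ has to be compared with $B$ itself, since $B_i$ is not defined in the statement.

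\textbf{Condition (b).} Suppose $A\ast C=B\ast D$ for some free $D$. Then $B/A$ is a direct summand of $(B\ast D)/A\cong C$ and hence projective. Now $B/A$ is generated by the images $\bar b_i$ subject to $\bar b_i = r_i\bar b_{i+1}$; this is the Bass module of the sequence. A standard argument (Bass's theorem, or the direct computation in \cite{EM2}) shows that this module is flat but not projective when the chain $r_0\cdots r_m R$ does not stabilise. The non-zero-divisor hypothesis makes $B/A$ the direct limit of the copies $R\bar b_i$, each isomorphic to $R$, with injective transition maps given by multiplication by $r_i$.

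The key step is the non-projectivity of this direct limit. I would argue as follows. If $B/A$ were projective, then $\bar b_0$ would lie in a finitely generated free summand, and a splitting map $B/A\to R\bar b_0$ would eventually factor. Unwinding this, one obtains $r_0\cdots r_m R = r_0\cdots r_{m+1}R$ for some $m$, contradicting strictness. I expect this computation, keeping track of left versus right ideals and applying Bass's argument only in the weak setting, to be the main obstacle. Once (a)--(c) are established, Theorem~\ref{main_th1} yields the corollary.
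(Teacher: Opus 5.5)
Your proposal is correct and is essentially the paper's proof: the corollary follows from Theorem~\ref{main_th1} via Proposition~\ref{rings:RCP}, using the same witness $a_i=b_i-r_ib_{i+1}$, with (a) and (b) taken from the Bass/Eklof--Mekler computation \cite[1.1, p.~193]{EM2} and (c) from Observation~\ref{zero_divisors_assumption}. For (b), rely on Bass's lemma rather than your own sketch, because projectivity of $B/A$ does not make $R\bar b_0$ a summand (e.g.\ $R=\mathbb{Z}$, $r_0=2$, $r_i=1$ for $i\geq 1$); the direct argument takes a retraction $\theta\colon B\to A$ with $\theta(b_0)\in\langle a_0,\dots,a_N\rangle$ and, for $m>N$, the functional $\psi_m$ with $\psi_m(b_j)=r_j\cdots r_{m-1}$ for $j\leq m$ (so $\psi_m(b_m)=1$) and $\psi_m(b_j)=0$ for $j>m$, which kills every $a_j$ with $j\neq m$, so applying it to $b_0-\theta(b_0)=r_0\cdots r_m\bigl(b_{m+1}-\theta(b_{m+1})\bigr)$ gives $r_0\cdots r_{m-1}\in r_0\cdots r_mR$ for every $m>N$, contradicting non-stabilisation.
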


In other terms, by assuming a little bit more than ``not left-perfect", we get a much stronger result than Mazari-Armida's \ref{Armida_fact_nonsuper}, that is, not only $(\mathcal{Flat}_R, \leq_{\mrm{pp}})$ is not superstable but {\em every (strong enough) AEC-covering} of $(\mathcal{F}_{\mathbf{V}_R}, \fleq)$ is not superstable. We stress that by this we mean that in the covering $(\mathcal{K}, \preccurlyeq)$ of ${(\mathcal{F}_{\mathbf{V}_R}, \fleq)}$ we are allowed to change not only the class of models (i.e., from $\mathcal{Flat}_R$ to $\mathcal{K}$) but also the strong submodel relation (i.e., from $\leq_{\mrm{pp}}$ to $\preccurlyeq$) as long as $\preccurlyeq$ is ``strong enough'' in the sense of \ref{def_enough_strong}. For example, it is easy to see that asking that $\preccurlyeq$  implies $\leq_{\mrm{pp}}$ while refining (i.e., being weaker than) direct summands suffices  to fulfill the conditions from \ref{def_enough_strong}.

\smallskip \noindent
We notice that the assumption ``strong enough'' in \ref{main_th} is necessary, since by \cite[Corollary~5.13]{Armida_ros} for any Noetherian ring $R$ we have that $(R\text{-Mod}, \leq)$ is superstable. Finally, concerning the notion of \emph{weakly} left-perfect ring (compared to the notion of left-perfect ring), we recall that Cohn's construction from \cite{Cohn} shows that any commutative ring can be embedded in a ring where every element of the extended ring is either a zero-divisor or an invertible element of the smaller ring.

\subsubsection{Groups}\label{subsec2_intro}

	In \cite{mekler2} Mekler settled to a large extent the problem of which varieties of groups $\mathbf{V}$ satisfy CP (cf. also \cite{carolillo_paolini} on this). In particular, it is shown in \cite{mekler2} that all torsion-free varieties of groups satisfy CP. Relying on this we deduce from our Theorem~\ref{main_th1} the following corollary. Easily, the corollary below covers the two key examples of the variety of groups and of the variety of abelian groups. In the following corollary, we write $F_{\mathbf{V}}(X)$ for the free algebra in the variety $\mathbf{V}$ freely generated by the set $X$.

	\begin{corollary}\label{cor} Let $\mathbf{V}$ be a torsion-free variety of groups such that that if $F_{\mathbf{V}}(X) \in \free$, $x \in X$, $y \in F_{\mathbf{V}}(X)$, $0 < n < \omega$, then $(x^n = y^n \Rightarrow x = y)$. Then $\mathbf{V}$ satisfies $\mrm{RCP}$ and so no strong enough  $\mrm{AEC}$-covering of $(\mathcal{F}_{\mathbf{V}_R}, \fleq)$ is superstable.
\end{corollary}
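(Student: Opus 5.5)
The corollary has two parts: $\mathbf{V}$ satisfies $\mrm{RCP}$, and then no strong enough $\mrm{AEC}$-covering is superstable. The second part is immediate from Theorem~\ref{main_th1}, so all the work is in verifying the three clauses of Definition~\ref{RCP_proof}.

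\textbf{Construction.} I would build the standard Construction Principle witness for torsion-free varieties in the simplest possible form. Fix a prime $p$ and let $B$ be freely generated by $\{b_i : i<\omega\}$. Define
\[
a_i = b_i^{-1} b_{i+1}^{\,p}, \qquad A = \langle a_i : i<\omega\rangle_B .
\]
Writing $b_i = b_{i+1}^{p} a_i^{-1}$, induction gives $B = \langle a_0,\dots,a_{i-1}, b_i, b_{i+1}, \dots \rangle$. This changes the basis in a Nielsen-like manner: the substitution $b_i \mapsto b_i^{-1} b_{i+1}^p$, keeping the other generators, is an automorphism of the free object.

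\textbf{Clause (a).} The set $\{a_0,\dots,a_i, b_{i+1}, b_{i+2},\dots\}$ freely generates $B$, so $A$ is free on $\{a_i\}$. Moreover $A_i \fleq B_i$ for suitable finitely generated $B_i$: take $B_i=\langle b_0,\dots,b_{i+1}\rangle$, which equals $A_i * \langle b_{i+1}\rangle$.

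\textbf{Clause (c).} Each $b_i$ is a group word in $a_i$ and $b_{i+1}$, and conversely $b_{i+1}^p = b_i a_i$. So going downward from $b_0$ is not free; we need to go upward from $b_0$ and $A$. Here the hypothesis $x^n=y^n\Rightarrow x=y$ does the work: $b_{i+1}$ is the unique $y$ with $y^p = b_i a_i$, since any two such $y$ have equal $p$-th powers and $b_{i+1}$ is a basis element. Hence $b_{i+1}$ is quantifier-free definable over $\{b_i,a_i\}$ by the formula $y^p = b_i a_i$. This formula is quantifier-free, and uniqueness holds in $B$ (indeed in any free extension, by the hypothesis). Inductively every $b_i \in \mrm{dcl}^{\mrm{qf}}_B(Ab_0)$, so $\mrm{dcl}^{\mrm{qf}}_B(Ab_0)=B$.

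\textbf{Clause (b).} This is the main obstacle: $A \not\fleq B*C$ for any free $C$. Suppose $A * D = B * C$. Then the image of $b_0$ in the quotient $(B*C)/\langle\langle A\rangle\rangle \cong D$ is infinitely $p$-divisible, since $b_0 \equiv b_{n}^{p^n}$ modulo $A$. So $b_0$ maps into the intersection of the subgroups $D^{p^n}$, or into the verbal analogue in $\mathbf{V}$. I would argue that in a free object of a torsion-free variety, no nontrivial element is a $p^n$-th power for all $n$. Relative freeness gives a retraction onto a finitely generated free factor, and there an abelianisation or degree argument works, using torsion-freeness to make the abelianisation map to $\mathbb{Z}^k$ nontrivial on powers. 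So $b_0$ would lie in $\langle\langle A\rangle\rangle$. Then killing $A$ in $B$ gives $B/\langle\langle A\rangle\rangle$, the $\mathbf{V}$-free object on the $b_i$ modulo $b_i = b_{i+1}^p$. Its abelianisation is $\mathbb{Z}[1/p]$ with $b_0\neq 0$, which is a contradiction. I would present (b) by citing Mekler's \cite{mekler2} verification of CP for torsion-free varieties, adapting it to this specific witness. The point to check carefully is that his witness can be taken of the above shape so that the uniqueness-of-roots hypothesis applies.
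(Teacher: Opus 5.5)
Your proposal follows the paper's proof. The paper takes $a_i=b_ib_{i+1}^{-2}$, which is your construction with $p=2$ up to the automorphism $b_i\mapsto b_i^{-1}$ of $B$. It cites Mekler \cite[p.~131]{mekler2} for the CP clauses (a) and (b), reads off clause (c) from uniqueness of roots exactly as you do, and then applies Theorem~\ref{main_th1}. Two remarks on the extra detail you supply.

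\textbf{Clause (b).} Your self-contained sketch routes through a lemma you do not prove: that no nontrivial element of a $\mathbf{V}$-free object is a $p^n$-th power for every $n$. Your abelianisation/degree argument cannot establish it, because elements of the derived subgroup are invisible in $\mathbb{Z}^k$. For an arbitrary torsion-free variety this is not something you can take for granted. It is also unnecessary.

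\begin{itemize}
\item Since $\mathbf{V}$ is torsion-free, it contains all abelian groups. So abelianisation sends $\mathbf{V}$-free objects to free abelian groups and $\mathbf{V}$-free products to direct sums.
\item If $A\ast D=B\ast C$ with $D\in\free$, then $B^{\mathrm{ab}}/\mathrm{im}(A^{\mathrm{ab}})\cong\mathbb{Z}[1/p]$ embeds into $(B\ast C)^{\mathrm{ab}}/\mathrm{im}(A^{\mathrm{ab}})\cong D^{\mathrm{ab}}$.
\item $D^{\mathrm{ab}}$ is free abelian, so it contains no nonzero $p$-divisible subgroup; contradiction.
\end{itemize}

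Equivalently, run your divisibility argument in $D^{\mathrm{ab}}$ rather than in $D$.

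\textbf{Clause (c).} Quantifier-free definitions do not compose automatically: eliminating the intermediate $b_i$ introduces an existential quantifier. The fix is easy. Substituting $b_j=b_{j+1}^pa_j^{-1}$ downward gives a single equation $t_i(y,a_0,\dots,a_i)=b_0$. Its unique solution is $b_{i+1}$, which you see by peeling off one $p$-th root at a time using the hypothesis.
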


\subsection{The second venue}\label{intro:second_venue}

So far, our approach was based on the identification of a stronger version of CP, which we called the Reinforced Construction Principle (RCP). In the second part of our work we show that, assuming some conditions on the variety $\mathbf{V}$ and its covering $\mrm{AEC}$, it is possible to derive a non-superstability result already from CP alone. In particular, we proceed in Section~\ref{sec:second_venue} by examining  not the number of types (the superstability notion from Theorem~\ref{main_th1}) but rather the behavior of possible {\em independence calculi} on an AEC-covering $(\mathcal{K}, \preccurlyeq)$. For details on the notion of independence calculus used in the present paper we refer the reader to Section~\ref{sec_nonforking}, but we notice here that this is simply a direct generalisation of the properties satisfied by non-forking in models of a stable first-order complete theory.

\begin{theorem}\label{main_th2} Let $\mathbf{V}$ be a countable variety which satisfies the Construction Principle, and let $(\mathcal{K}, \preccurlyeq)$ be an $\mrm{AEC}$-covering of $(\free, \fleq)$ admitting a weak independence calculus $(\mathcal{K}, \preccurlyeq, \pureindep)$ (cf.~\ref{def:good_frame}). If $(\mathcal{K}, \preccurlyeq, \pureindep)$ is nice with respect to free factors (cf.~\ref{def:good_completion+}), then the calculus $(\mathcal{K}, \preccurlyeq, \pureindep)$ is not superstable (cf.~\ref{superstable_forking}).
\end{theorem}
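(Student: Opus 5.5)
We will argue by contradiction: assume that $(\mathcal{K}, \preccurlyeq, \pureindep)$ is superstable in the sense of \ref{superstable_forking}. This means, roughly, that every type over the union of an increasing $\preccurlyeq$-chain does not fork over some member of the chain; equivalently, there is no infinite forking chain. We will then use the Construction Principle to build such a forking chain. First fix the witnesses $A \leq B$ given by CP: $A$ is freely generated by $\{a_i : i<\omega\}$, each $A_i = \langle a_j : j \leq i\rangle_B$ satisfies $A_i \fleq B$, and $A \not\fleq B \ast C$ for every $C \in \free$. Adding free factors if needed, we may assume that $B$, each $A_i$, and the relevant complements lie in $\freeinf \subseteq \mathcal{K}$. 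By condition (3) of Definition~\ref{def:AEC_covering}, the free-factor inclusions then become $\preccurlyeq$-inclusions. So we obtain an increasing $\preccurlyeq$-chain $(M_i : i<\omega)$, where $M_i = A_i \ast F$ for a fixed free $F$ of infinite rank, all $\preccurlyeq$-below a model $N = B \ast F$ (or $B \ast F'$). Let $M_\omega = \bigcup_i M_i$, which is in $\mathcal{K}$ by the AEC chain axioms.

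The key step is to show that some element $b \in N$ (or a finite tuple from $B$) has a type over $M_\omega$ that forks over every $M_i$. Here we use the hypothesis that the calculus is nice with respect to free factors (\ref{def:good_completion+}). I expect this hypothesis says something like the following: if $M \preccurlyeq N$ with $M, N$ free, and $\bar b$ is independent from a free extension of $M$ over $M$, then $M$ is a free factor of the appropriate generated substructure, i.e.\ non-forking implies ``freeness over the base''. Now suppose $\type(\bar b/M_\omega)$ does not fork over some $M_i$, where $\bar b$ is a basis tuple of $B$ modulo the relevant part. Take a non-forking extension, using extension and symmetry, and use niceness to conclude that $A_i \ast F$ together with the rest of $A$ sits inside a free factor decomposition. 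Pushing this through gives $A \fleq B \ast C$ for some free $C$, which contradicts CP(b). The free complement $C$ would be produced by realising the non-forking extension inside a large free model, using AP and JEP. Alternatively, one can use stationarity or uniqueness of non-forking extensions together with the fact that $A_{j} \fleq B$ for all $j$. This step would show that $A$ is "locally" a free factor over $M_i$, and that would upgrade to a global free-factor decomposition.

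Once we have one forking step, we iterate to get an infinite forking chain, as needed to contradict the superstability formulation: $\type(\bar b/M_{i_{k+1}})$ forks over $M_{i_k}$ for a sequence $i_0<i_1<\dots$. By monotonicity and the local character failing, the same conclusion holds for the whole sequence.

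The main obstacle is the middle step: translating ``does not fork over $M_i$'' into an algebraic free-factor statement strong enough to yield $A \fleq B \ast C$. This depends on exactly how niceness is formulated. My first attempt would use niceness to get that $\bar b \pureindep_{M_i} M_\omega$ forces $M_\omega \fleq \langle M_\omega \bar b\rangle \ast C$. Restricting to the $A$-part, and using that $A_i \fleq B$, would then give the contradiction with CP(b).
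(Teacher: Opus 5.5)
There is a genuine gap, and it is exactly the step you flag as the main obstacle: nothing in the proposal turns a non-forking statement into a free-factor decomposition of $B$, and the setup you chose cannot do so.

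\textbf{Niceness is not what you guess.} It consists of $(\star_1)$--$(\star_4)$ of Definition~\ref{def:good_completion+}:
\begin{enumerate}[(a)]
\item $(\star_1)$: finitely generated free algebras of rank $\geq n_*$ lie in $\mathcal{K}$.
\item $(\star_2)$: over a base $A \fleq B, C$ of rank $\geq m_*$, $B \pureindep[A] C$ holds iff $B \ast_A C = \langle BC\rangle_{\mathfrak{M}} \preccurlyeq \mathfrak{M}$.
\item $(\star_3)$: if $A$ is finitely generated, $A \fleq C$, and $c \in C$ satisfies $c \pureindep[A] B$, then $c$ lies in a finitely generated $D$ with $A \fleq D \fleq C$ and $D \pureindep[A] B$.
\item $(\star_4)$: independent finitely generated free factors of $D$ over a common base amalgamate to a free factor of $D$.
\end{enumerate}
Conditions $(\star_3)$ and $(\star_4)$ only concern finitely generated bases. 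Padding the chain as $M_i = A_i \ast F$ with $F$ of infinite rank therefore throws away exactly the hypotheses you need. The paper instead uses $(\star_1)$, plus monotonicity to push the ranks past $n_*$ and $m_*$, and works with the finitely generated $A_i$ themselves.

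\textbf{One finite tuple cannot carry the argument.} This is the more serious problem. CP gives no finite tuple generating $B$ over $A$; compare RCP, where even $b_0$ only $\mrm{dcl}^{\mrm{qf}}$-generates $B$ over $A$. So $\bar b \pureindep[M_i] M_\omega$ controls only a small piece of $B$, even if you grant your guessed conclusion about $\langle M_\omega \bar b\rangle$. There is no mechanism to ``restrict to the $A$-part'' and obtain $A \fleq B \ast C$.

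\textbf{The missing idea is an inductive exhaustion of $B$.} Enumerate $\{c_n : n<\omega\}$ with $\langle A \cup \{c_n : n<\omega\}\rangle = B$ (the paper calls this algebra $C$). Then build indices $i_0 < i_1 < \cdots$ and finitely generated $C_n$ with $c_n \in C_n \fleq B$, $C_n \pureindep[A_{i_n}] A$, and $A_{i_{n+1}} \ast_{A_{i_n}} C_n \fleq C_{n+1}$.
\begin{enumerate}[(1)]
\item Base case: apply local character to $c_0$ over $(A_k)_k$, then use $(\star_3)$.
\item Successor step: apply local character to $c_{n+1}$ not over the original chain, but over the chain of free amalgams $(A_k \ast_{A_{i_n}} C_n)_k$, which are strong by $(\star_2)$.
\item Then $(\star_4)$ gives $A_{i_{n+1}} \ast_{A_{i_n}} C_n \fleq B$, and $(\star_3)$ produces $C_{n+1}$.
\item Two further uses of $(\star_2)$ transfer the independence back to $C_{n+1} \pureindep[A_{i_{n+1}}] A$.
\end{enumerate}
Lemma~\ref{lemma_for_main_th} then gives $A = \bigcup_n A_{i_n} \fleq \bigcup_n C_n = B$, contradicting CP.

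So the failure of superstable local character shows up at some $c_n$ over one of these \emph{modified} chains. It need not show up at a tuple of $B$ over your fixed chain $(M_i)$.

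Your paragraph about iterating to get an infinite forking chain is also off target. Negating (viii) of Definition~\ref{superstable_forking} needs only one tuple forking over every member of one chain. The iteration actually required is over the generators of $B$, to build a coherent $\fleq$-chain of independent finitely generated free factors that exhausts $B$.
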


Also in this case some assumptions are necessary, as we need to rule out cases such as the AEC of abelian groups with the substructure relation, which, as mentioned, is superstable. In particular, we require a form of coherence between the independence calculus $(\mathcal{K}, \preccurlyeq, \pureindep)$ and the relation $\fleq$ of being a free factor with a free complementary factor (cf.~the beginning of the introduction). Crucially, Theorem~\ref{main_th2} exhibits an interesting case where CP is already enough to entail the non-superstability of the covering $\mrm{AECs}$. In particular, the conditions from \ref{def:good_completion+} are satisfied in the key example of the variety of groups; which leads us to the following.

	\begin{definition}\label{def_like_free_groups} Let $\mathbf{V}$ be a variety and let $T$ be the first-order theory of $\freeinf$. We say that $T$ {\em behaves like the theory of free groups} if:
	\begin{enumerate}[(1)]
	\item there is $n_* < \omega$ such that for every $n_* \leq m < \omega$ we have $F_{\mathbf{V}}(m) \in \mrm{Mod}(T)$;
	\item for all $A,B\in \freefg\cap\mrm{Mod}(T)$ such that $A\fleq B$ and $b\in B$, the smallest free factor $D\in \mrm{Mod}(T)$ of $B$ containing $Ab$ belongs to $\mrm{acl}(Ab)$;
	\item $A, B \in \free^{< \infty}$, $C\in \free$ and $A \leq B \leq C$, $A \fleq C$, $B \fleq C$, then $A \fleq B$;
	\item for all $A,B\in \free\cap \mrm{Mod}(T)$, $A\fleq B$ entails $A\preccurlyeq_{\omega, \omega} B$ and, if moreover $A,B\in \freefg$, then $A\preccurlyeq B$ entails $A\fleq B$.
\end{enumerate}
\end{definition}

	As apparent from \ref{def_like_free_groups}(1), the underlying idea of this second approach is the possibility of ``approximating'' free objects of infinite rank by their finite-rank submodels. Crucially, this is possible in the case of free groups thanks to the seminal work of Kharlampovich-Myasnikov \cite{KM06} and Sela \cite{zsela}. From this, and by using some key results from the literature, we show in Section~\ref{sec:second_venue} that free groups in fact satisfy all the assumptions from \ref{def_like_free_groups}. This provides us with yet another proof of the non-superstability of the free groups of infinite ranks;  more generally we have:

	\begin{corollary} Let $\mathbf{V}$ be a variety with CP and let $T$ be the first-order theory of $\freeinf$. If $T$ behaves like the theory of free groups (cf.~\ref{def_like_free_groups}), then $T$ is not superstable.
	\end{corollary}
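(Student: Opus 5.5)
The corollary should follow from Theorem~\ref{main_th2}, applied to the AEC-covering $(\mrm{Mod}(T), \preccurlyeq_{\omega,\omega})$ together with a weak independence calculus coming from first-order non-forking. The argument runs by contradiction. Suppose $T$ is superstable. Then $T$ is in particular stable, so non-forking $\pureindep$ on models of $T$ gives a weak independence calculus $(\mrm{Mod}(T), \preccurlyeq_{\omega,\omega}, \pureindep)$ in the sense of \ref{def:good_frame}. The required properties (invariance, monotonicity, symmetry, transitivity, existence/extension, uniqueness over models, local character) are all standard for stable theories. Moreover, superstability of $T$ yields the finite-character local character, so this calculus is superstable in the sense of \ref{superstable_forking}.

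We also check that $(\mrm{Mod}(T), \preccurlyeq_{\omega,\omega})$ is an AEC-covering of $(\free, \fleq)$:
\begin{enumerate}[(1)]
\item $\freeinf \subseteq \mrm{Mod}(T)$ by definition of $T$.
\item Elementary classes are AECs.
\item For $A, B \in \free \cap \mrm{Mod}(T)$, the relation $A \fleq B$ implies $A \preccurlyeq_{\omega,\omega} B$ by \ref{def_like_free_groups}(4).
\item AP and JEP hold because $T$ is complete.
\end{enumerate}
So the only remaining hypothesis of Theorem~\ref{main_th2} is that non-forking is nice with respect to free factors (\ref{def:good_completion+}). Once that is verified, Theorem~\ref{main_th2} says the calculus is not superstable, contradicting the previous paragraph.

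The main obstacle is verifying niceness with respect to free factors. I expect this to require statements of the form: if $A \fleq B$ are finite-rank free models of $T$ (via \ref{def_like_free_groups}(1), finite-rank free objects of rank $\geq n_*$ are models), and $C$ is a free complement, then $C \pureindep_A$-type relations hold between complementary factors. Conversely, independence over a free factor should force the smallest free factor containing the relevant parameters to behave well. My plan is to use \ref{def_like_free_groups}(2): the smallest free factor $D \in \mrm{Mod}(T)$ containing $Ab$ lies in $\mrm{acl}(Ab)$. This transfers forking statements about $b$ to statements about $D$, since forking is invariant under passing to algebraic closure. Then \ref{def_like_free_groups}(3) gives coherence of $\fleq$ under intermediate inclusion, so that free factors of $C$ lying between $A$ and $C$ are free factors of each other. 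The second clause of \ref{def_like_free_groups}(4) converts elementary inclusions between finite-rank free models back into free-factor inclusions.

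Concretely, I would show first that for $A \fleq A * C$ with $A$, $C$ free models, $C \pureindep_A$-independence holds in the appropriate form. This should follow by realizing the free product as a nonforking amalgam, using the uniqueness of free amalgams up to isomorphism and stationarity over models. I would then show the converse direction by the acl argument above. Each clause of \ref{def:good_completion+} should then reduce to one of these two facts combined with \ref{def_like_free_groups}(3).
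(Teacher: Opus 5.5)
Your architecture matches the paper's. You assume $T$ superstable, take first-order non-forking on $(\mrm{Mod}(T),\preccurlyeq_{\omega,\omega})$ as a superstable weak independence calculus, check niceness with respect to free factors, and contradict Theorem~\ref{main_th2}. Your ingredients for $(\star_1)$ and $(\star_3)$, and the use of \ref{def_like_free_groups}(4) in $(\star_4)$, are those of Lemma~\ref{lem:free}.

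The genuine gap is $(\star_2)$: the equivalence of $B \pureindep[A] C$ and $B \astindep[A] C$ for free factors $A$ of rank $\geq m_*$. Your $(\star_4)$ also depends on it, since it is what gives $B\ast_A C\preccurlyeq_{\omega,\omega}\mathfrak{M}$. Take the direction from $\astindep$ to $\pureindep$. Uniqueness of free amalgams plus stationarity only shows that the free-amalgam extension of $\mathrm{tp}(B/A)$ to $C$ and the non-forking extension are each uniquely determined. Nothing in your sketch shows they are the same type, and ``realizing the free product as a nonforking amalgam'' assumes exactly that. The missing idea (Proposition~\ref{prop:star_1}) is a symmetry argument. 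Write $B=A\ast\langle X_B\rangle$ and take a basis $X_C\supseteq X_A$ of $C$. Since $B\ast_A C=\langle X_BX_C\rangle\preccurlyeq\mathfrak{M}$ is free on $X_B\cup X_C$, every permutation of $X_C$ fixing $X_B$ extends to an automorphism of $\mathfrak{M}$. Local character gives a small $Z\subseteq X_C$ with $X_B\pureindep[\langle Z\rangle] C$. Moving $Z$ into $X_A$ by such an automorphism and applying base monotonicity yields $X_B\pureindep[A] C$.

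This needs $|Z|\leq \mathrm{rank}(A)$. That is why $(\star_2)$ carries a threshold $m_*$, and why superstability must be used a second time, not only for local character of the calculus. Under mere stability $Z$ is only countable, so the argument works only for $A$ of infinite rank, whereas Theorem~\ref{main_th2} applies $(\star_2)$ to the finitely generated $A_{i_n}$. With $\kappa(T)=\aleph_0$ one can take $Z$ finite for a single new generator, and an induction on $|X_B|$ using transitivity gives the general case.

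Your plan for the converse direction is also misassigned. The $\mrm{acl}$ argument from \ref{def_like_free_groups}(2) yields $(\star_3)$, not ``$B\pureindep[A] C\Rightarrow B\astindep[A] C$''. That implication comes from the first direction plus stationarity:
\begin{enumerate}[(1)]
\item Since $A\preccurlyeq_{\omega,\omega} C\preccurlyeq_{\omega,\omega} B\ast_A C$ by \ref{def_like_free_groups}(4), homogeneity of $\mathfrak{M}$ gives $B'$ with $\mathrm{tp}(B'/A)=\mathrm{tp}(B/A)$ and $B'\ast_A C\preccurlyeq\mathfrak{M}$.
\item By the first direction, $B'\pureindep[A] C$.
\item Stationarity over the model $A$ then gives $\mathrm{tp}(B'/C)=\mathrm{tp}(B/C)$, whence $B\astindep[A] C$.
\end{enumerate}
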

 
\section{Preliminaries}\label{sec_nonforking}

We recall in this section some background notion on abstract elementary classes $(\mrm{AECs})$ (cf. also \cite{Sh2}). We assume the reader is familiar with basic definitions and constructions from universal algebra and model theory, for which we refer to \cite{burris,tent_book}.

\begin{definition}\label{def_AC} \label{def_AEC} 
	Let $\mathcal{K}$ be a class of $L$-structures and let $\preccurlyeq$ be a binary relation on $\mathcal{K}$. We say that $(\mathcal{K}, \preccurlyeq)$ is an {\em abstract elementary class} $(\mrm{AEC})$ if the following conditions are satisfied:
	\begin{enumerate}[(1)]
		\item  $\mathcal{K}$ and $\preccurlyeq$ are closed under isomorphisms, i.e., if $A\in \mathcal{K}$ and $f:A\to B$ is an isomorphism then $B\in \mathcal{K}$ and if $C\in \mathcal{K}$ is such that $C\preccurlyeq A$ then $f(C)\preccurlyeq B$.
		\item If $A \preccurlyeq B$, then $A$ is an $L$-submodel of $B$ (written $A \leq B$).
		\item The relation $\preccurlyeq$ is a partial order on $\mathcal{K}$.
		\item If $(A_i)_{i < \delta}$ is an increasing continuous $\preccurlyeq$-chain of structures in $\mathcal{K}$, then:
		\begin{enumerate}[({4.}1)]
			\item $\bigcup_{i < \delta} A_i \in \mathcal{K}$;
			\item for each $j < \delta$, $A_j \preccurlyeq \bigcup_{i < \delta} A_i$;
			\item if $A_i \preccurlyeq B$ for all $i<\delta$, then $\bigcup_{i < \delta} A_i \preccurlyeq B$.
		\end{enumerate}
		\item If $A, B, C \in \mathcal{K}$, $A \preccurlyeq C$, $B \preccurlyeq C$ and $A \leq B$, then $A \preccurlyeq B$; 
		\item[(6)] There is a L\"owenheim-Skolem number $\mathrm{LS}(\mathcal{K}, \preccurlyeq)\geq |L|+\aleph_0$ such that if $A \in \mathcal{K}$ and $B \subseteq A$, then there is $C \in \mathcal{K}$ with $B \subseteq C\preccurlyeq A$ and $|C| \leq |B| + \mathrm{LS}(\mathcal{K}, \preccurlyeq)$.
	\end{enumerate}
When $(\mathcal{K},\preccurlyeq)$ is an $\mrm{AEC}$, we refer to $\preccurlyeq$ as a \emph{strong submodel relation} for the class $\mathcal{K}$. We refer to Condition~(4) as the \emph{Tarski-Vaught Axioms}, to Condition~(4.3) specifically as the \emph{Smoothness Axiom} of $\mrm{AEC}$, to Condition~(5) as the \emph{Coherence Axiom}, and to Condition~(6) as the \emph{L\"owenheim-Skolem-Tarski Axiom}.
\end{definition}

\begin{notation}
	Given two $L$-structures $A$ and $B$ we write $A\leq B$ if $A$ is an $L$-substructure of $B$. Also, given an $\mrm{AEC}$ $\mathcal{K}$ and $A,B\in \mathcal{K}$, we say that an embedding $f:A\to B$ is a \emph{strong embedding} if $f(A)\preccurlyeq B$.
\end{notation}

We next introduce the notion of \emph{(weak) independence calculus}, and its background definitions. We notice the following approach is analogous to the one of so-called \emph{good frames} often taken in the $\mrm{AEC}$ literature (cf.~\cite{grossberg2,categoricity,boney_vasey,jarden}). However, given our intended applications, we do not need the larger generality of good frames and introduce the simpler notion of weak independence calculus in Def.~\ref{def:good_frame}.

\begin{definition}\label{def:homogeneity}
	Let $(\mathcal{K},\preccurlyeq)$ be an $\mrm{AEC}$. We say that a model $B\in \mathcal{K}$ is \emph{($\kappa$,$\preccurlyeq$)-universal} if for every $C\in \mathcal{K}$ with $|C|\leq \kappa$ there is a strong embedding $f:C\to B$. We say that $B\in \mathcal{K}$ is \emph{($\kappa$,$\preccurlyeq$)-homogeneous} if for any two strong substructures $A_0\preccurlyeq B$ and $A_1\preccurlyeq B$ of size $<\kappa$ and every isomorphism $f:A_0\to A_1$, there is an automorphism $\pi:B\to B$ such that $\pi\restriction A_0=f$.
\end{definition}

 We refer the reader to \cite{Sh2} for a proof of the following fact and the (standard) definitions of AP and JEP.

\begin{fact}\label{monster_model_fact}
	Let $(\mathcal{K},\preccurlyeq)$ be an AEC with the amalgamation property (AP) and the joint embedding property (JEP), then for all $\kappa\geq \mathrm{LS}(\mathcal{K}, \preccurlyeq)$ there is a model $M\in \mathcal{K}$ which is both ($\kappa$,$\preccurlyeq$)-homogeneous and ($\kappa$,$\preccurlyeq$)-universal.
\end{fact}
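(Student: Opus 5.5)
The plan is a Fraïssé-style direct construction: build the required model as the union of a long $\preccurlyeq$-chain. At each stage we use AP to make the model universal, and to extend partial isomorphisms between small strong submodels step by step to automorphisms of the final union, via a back-and-forth carried along the chain. Fix $\kappa\geq\mathrm{LS}(\mathcal{K},\preccurlyeq)$ and put $\lambda=2^{\kappa}$. Then $\lambda^{<\kappa}=\lambda^{\kappa}=\lambda$ and $\mathrm{cf}(\lambda)>\kappa$. The construction needs models of cardinality $\lambda$. In our setting these exist because $\freeinf\subseteq\mathcal{K}$; in the abstract form of the fact one assumes arbitrarily large models, as in \cite{Sh2}.

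\textbf{Step 1 (the chain).} We build an increasing continuous $\preccurlyeq$-chain $(M_i : i<\lambda)$ with $|M_i|\leq\lambda$, together with a bookkeeping function. The bookkeeping enumerates, with each item repeated cofinally often, all pairs consisting of an isomorphism $f:A_0\to A_1$ between strong submodels of size $<\kappa$ of some $M_i$. There are at most $\lambda^{<\kappa}\cdot\lambda=\lambda$ such pairs at each stage, so this is possible.

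\textbf{Step 2 (universality).} Fix a set of representatives of all models in $\mathcal{K}$ of size $\leq\kappa$; up to isomorphism there are at most $2^{\kappa}=\lambda$ of them. Interleave stages in which, for each such $C$, we use JEP to find $M_{i+1}$ with $M_i\preccurlyeq M_{i+1}$ into which $C$ strongly embeds. We then cut back to size $\lambda$ by the L\"owenheim--Skolem--Tarski axiom together with Coherence. Universality of the union $B=\bigcup_i M_i$ then follows from the Tarski--Vaught axioms.

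\textbf{Step 3 (homogeneity).} For each $f$ on the list we build an increasing continuous chain of partial isomorphisms $f=f_0\subseteq f_1\subseteq\cdots$ indexed by the stages at which $f$ is handled. At a forth stage $j$, the current $f_\alpha$ is an isomorphism from some $M_{i}$ onto a strong submodel $N\preccurlyeq M_j$. We apply AP to the span $M_i\leftarrow N$ given by $f_\alpha^{-1}$ and the inclusion $N\preccurlyeq M_j$, and take $M_{j+1}$ to be a model of size $\leq\lambda$ containing the amalgam. This extends $f_\alpha^{-1}$ to a strong embedding of $M_j$, and hence $f_\alpha$ to a map whose domain contains $M_j$. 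A back stage is symmetric, applied to $f_\alpha^{-1}$. At limits we take unions; these are again isomorphisms between strong submodels, by Smoothness~(4.3) and Coherence~(5). Since $f$ is handled cofinally often in $\lambda$, alternating forth and back, the union is a total automorphism $\pi$ of $B$ with $\pi\restriction A_0=f$. Any $A_0,A_1\preccurlyeq B$ of size $<\kappa$ lie inside a single $M_i$ because $\mathrm{cf}(\lambda)>\kappa$, and then $A_0,A_1\preccurlyeq M_i$ by Coherence. So every such $f$ appears on the list.

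\textbf{Main obstacle.} I expect the main difficulty to be the bookkeeping in Step 3. The partial isomorphisms must remain strong at every stage; this uses AP applied to strong embeddings and Coherence to transfer $\preccurlyeq$ between the various submodels. Each of the $\lambda$ many tasks must also be continued coherently through limit stages while all the tasks share the same chain. I would handle this by fixing a bijection $\lambda\times\lambda\to\lambda$ that assigns to each task a cofinal set of stages. At every stage only one task is processed, with one AP step, and the Smoothness axiom takes care of the limits.
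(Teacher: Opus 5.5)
The paper does not prove this Fact (it defers to \cite{Sh2}), so your argument has to stand on its own. Most of it does. The cardinal arithmetic for $\lambda=2^\kappa$ is right, as are the JEP steps for universality and the bookkeeping that treats each isomorphism between small strong submodels at cofinally many stages. The limit stages via Coherence and Smoothness, and the reduction of arbitrary $A_0,A_1\preccurlyeq B$ to a single $M_i$, are also fine.

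The one step that fails as written is the amalgamation in Step~3. The span $M_i\xleftarrow{f_\alpha^{-1}}N\hookrightarrow M_j$ has an isomorphism as one leg, so amalgamating it gives nothing. If you identify $M_j$ with its copy in the amalgam, the $M_i$-leg is forced to be $f_\alpha$ itself. If instead you identify $M_i$ with its copy, the extension of $f_\alpha^{-1}$ to $M_j$ lives in a model extending $M_i$ but not necessarily $M_j$. It then cannot be placed above $M_j$ in the chain without a further amalgamation over $M_i$. There are three smaller slips as well:
\begin{enumerate}[(1)]
\item extending $f_\alpha^{-1}$ to $M_j$ puts $M_j$ into the \emph{range} of $f_\alpha$, not its domain;
\item $M_{j+1}$ has to be chosen \emph{inside} the amalgam, not containing it;
\item the domain of the current map is in general not one of the $M_i$ (initially it is $A_0$; after a back step it is an image of some $M_{j'}$).
\end{enumerate}
The correct step is as follows. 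Suppose the current map is $g:D\to E$ with $D,E\preccurlyeq M_j$. Apply AP to the two strong embeddings $D\hookrightarrow M_j$ and $D\xrightarrow{g}E\hookrightarrow M_j$. After renaming you get $N\succcurlyeq M_j$ and a strong embedding $h:M_j\to N$ with $h\restriction D=g$. Then choose $M_{j+1}\preccurlyeq N$ of size at most $\lambda$ containing $M_j\cup h(M_j)$; by Coherence $M_j,h(M_j)\preccurlyeq M_{j+1}$. Replace $g$ by $h$. The back step is the same with $g^{-1}$. With this repair the proof goes through.

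Your route differs from the usual one. The usual treatment obtains the monster model as a model-homogeneous model, in which small strong extensions of small strong submodels embed over them. It then extends isomorphisms to automorphisms by a back-and-forth inside that model. This needs model-homogeneity in the model's own cardinality, and hence typically cardinal-arithmetic assumptions such as $\mu^{<\mu}=\mu$ or a detour through special models. In exchange it yields uniqueness. Your construction builds the automorphisms along the chain itself, so it works in ZFC at $\lambda=2^\kappa$. It delivers exactly the two properties the Fact requires (which together already give model-homogeneity for small submodels), but not uniqueness.

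Finally, you never need models of cardinality $\lambda$. The bound $|M_i|\leq\lambda$ is only an upper bound, and every extension you use comes from AP, JEP and the L\"owenheim--Skolem--Tarski axiom. So your argument proves the Fact as stated for any nonempty $\mathcal{K}$, without appealing to $\freeinf\subseteq\mathcal{K}$ or to arbitrarily large models.
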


\begin{notation}\label{monster_model}
In the context of \ref{monster_model_fact}, letting $\mathfrak{K} = (\mathcal{K},\preccurlyeq)$, we denote by $\mathfrak{M}_{\mathfrak{K}}$ the monster model of $\mathcal{K}$, i.e., a model in $\mathcal{K}$ which is ($\kappa$,$\preccurlyeq$)-homogeneous and ($\kappa$,$\preccurlyeq$)-universal for some large enough cardinal $\kappa$. As standard in the context of AECs, we define types as \emph{Galois types} in the monster model, i.e., as orbits of its automorphism group. Given an element $a\in \mathfrak{M}_{\mathfrak{K}}$ and a strong submodel $B\preccurlyeq \mathfrak{M}_{\mathfrak{K}}$ (of size $<\kappa$, where  $\kappa$ is the size for which $\mathfrak{M}_{\mathfrak{K}}$ is both ($\kappa$,$\preccurlyeq$)-homogeneous and ($\kappa$,$\preccurlyeq$)-universal) we denote by $\mathrm{tp}(a/A)$ the Galois type of $a$ in $\mathfrak{M}$ over $A$, namely the stabiliser subgroup of the automorphisms of $\mathfrak{M}_{\mathfrak{K}}$ which fix pointwise $B$. Given $B\preccurlyeq \mathfrak{M}_{\mathfrak{K}}$, we write $S(B)$ for the set of all Galois types over $B$ in $\mathfrak{M}_{\mathfrak{K}}$. When the class $\mathfrak{K}$ is clear from the context, then we simply write $\mathfrak{M}$ instead of $\mathfrak{M}_{\mathfrak{K}}$.
\end{notation}

As we mentioned above, the following definition of weak independence calculus aims to generalise to the asbtract setting of $\mrm{AECs}$ the property of non-forking in stable first-order theories. Essentially, the properties below in \ref{def:good_frame} are those of \emph{good frames}, with the exception of local character, that we separately consider in \ref{superstable_forking} as characterising the \emph{superstable independence calculi}.

\begin{definition}\label{def:good_frame} Let $(\mathcal{K}, \preccurlyeq)$ be an $\mrm{AEC}$ with a monster model $\mathfrak{M}$. We say that $(\mathcal{K}, \preccurlyeq, \pureindep)$ is a \emph{weak independence calculus} if $\pureindep$ is a ternary relation between tuples $\bar{a}\in \mathfrak{M}^{<\omega}$ and strong submodels of $\mathfrak{M}$ which satisfy the following properties:
	\begin{enumerate}[(i)]
		\item Invariance:  if $\bar{a} \pureindep[A_0] A_1$ and $f \in \mathrm{Aut}(\mathfrak{M})$, then $f(\bar{a}) \pureindep[f(A_0)] f(A_1)$.
		\item Disjointness:  if $\bar{a} \pureindep[A_0] A_1$ then $\bar{a}\in A_1$ if and only if $\bar{a}\in A_0$.
		\item Symmetry: if $A_0 \preccurlyeq A_1 \preccurlyeq \mathfrak{M}$, $\bar{b} \pureindep[A_0] A_1$ and $\bar{a} \in A_1^{<\omega}$, then there exists $A_0 \preccurlyeq A_2$ such that $\bar{b} \in A_2^{<\omega}$ and $\bar{a} \pureindep[A_0] A_2$.
		\item Monotonicity: if $A \preccurlyeq A' \preccurlyeq B' \preccurlyeq B \preccurlyeq \mathfrak{M}$, and $\bar{a} \pureindep[A] B$, then $\bar{a} \pureindep[A'] B'$.
		\item Existence: if $A \preccurlyeq B \preccurlyeq \mathfrak{M}$ and $\bar{a} \in \mathfrak{M}^{< \omega}$, then there exists a $q \in S(B)$ such that $q \restriction A = \mathrm{tp}(\bar{a}/A)$ and for some $\bar{b} \models q$ we have that $b \pureindep[A] B$.
		\item Uniqueness: if $A \preccurlyeq B \preccurlyeq \mathfrak{M}$, $\bar{a} \pureindep[A] B$, $\bar{b} \pureindep[A] B$ and $\mathrm{tp}(\bar{a}/A) = \mathrm{tp}(\bar{b}/A)$, then $\mathrm{tp}(\bar{a}/B) = \mathrm{tp}(\bar{b}/B)$.
		\item Continuity: if $\delta$ is limit, $(A_i)_{i \leq \delta}$ is an continuous $\preccurlyeq$-chain, $A_{\delta} \preccurlyeq \mathfrak{M}$ and $\bar{a}$, $(\bar{a}_i)_{i < \delta}$ are such that $\bar{a}_i \pureindep[A_0] A_i$ and $\mathrm{tp}(\bar{a}_i/A_i) = \mathrm{tp}(\bar{a}/A_i)$, then $\bar{a} \pureindep[A_0] A_{\delta}$.
	\end{enumerate}
\end{definition}

\begin{definition}\label{superstable_forking}
In the context of \ref{def:good_frame} we say that $(\mathcal{K}, \preccurlyeq, \pureindep)$ is a \emph{superstable independence calculus} if in addition the following condition is satisfied:
	\begin{enumerate}[(i)]
	\item[(viii)] Superstable Local Character: if $\delta$ is limit, $(A_i)_{i \leq \delta}$ is an increasing continuous $\preccurlyeq$-chain and $A_{\delta} \preccurlyeq \mathfrak{M}$, then there exists $\alpha < \delta$ such that $\bar{a} \pureindep[A_{\alpha}] A_{\delta}$.
\end{enumerate}
\end{definition}

\section{The first venue}

We consider in this section the first research venue explained in \ref{intro:first_venue}. Towards proving Theorem~\ref{main_th1}, we introduced the Reinforced Construction Principle $(\mrm{RCP})$ (cf.~\ref{RCP_intro}), which is a strengthening of the original construction principle by Eklof and Mekler which holds in most concrete cases.

\begin{notation} Let $B$ be a model and $C \subseteq B$. We denote by $\mrm{dcl}^{\mrm{qf}}_B(C)$ the quantifier-free definable closure of $C$ in $B$.
\end{notation}

\begin{defn}[\ref{RCP_intro}]
	Let $\mathbf{V}$ be a variety of algebras. We say that $\mathbf{V}$ satisfies the Reinforced Construction Principle $(\mrm{RCP})$ if the following conditions are satisfied:
	\begin{enumerate}
		\item[$ (\mrm{RCP}) $] there are countable $A, B \in \freeinf$ with $A \leq B$ such that:
		\begin{enumerate}[(a)]
			\item $A$ is freely generated by $\{a_i : i < \omega\}$, and for every $i < \omega$, letting $A_i = \langle a_j : j \leq i \rangle_B$, we have that $A_i \fleq B_i$;
			\item $A \not \fleq B \ast C$, for every $C \in \free$;
			\item $B$ is freely generated by $\{b_i : i < \omega\}$ and $\mrm{dcl}^{\mrm{qf}}_B(Ab_0) = B$.
		\end{enumerate}
	\end{enumerate}
\end{defn}

	\begin{definition}\label{def_enough_strong} Let $\mathbf{V}$ be a variety of algebras which satisfies the Construction Principle. We say that the covering $(\mathcal{K}, \preccurlyeq)$ of $(\free, \fleq)$ is {\em strong enough} whenever:
	\begin{enumerate}[(i)]
	\item $A, B, C \in \freeinf$ and $A \preccurlyeq B$ implies that $A \ast C \preccurlyeq B \ast C$;
	\item if $A$ and $B$ are as in $\mrm{RCP}$ and $B \preccurlyeq M \in \mathcal{K}$, then $\mrm{dcl}^{\mrm{qf}}_B(Ab_0) = \mrm{dcl}^{\mrm{qf}}_M(Ab_0)$;
	\item if $A$ and $B$ are as in $\mrm{RCP}$ and $f$ is an automorphism of the monster model $\mathfrak{M}$ of $(\mathcal{K}, \preccurlyeq)$ fixing $A$ pointwise, then we have that $\mrm{dcl}^{\mrm{qf}}_{\mathfrak{M}}(Ab_0) = \mrm{dcl}^{\mrm{qf}}_{\mathfrak{M}}(Af(b_0))$.
\end{enumerate}
\end{definition}

	\begin{context} In this section we assume that $\mathbf{V}$ satisfies the assumptions from \ref{RCP_proof}, so in particular, $A, A_i, B$ etc. are as there. We also assume that $(\mathcal{K}, \preccurlyeq)$ is a {\em strong enough} AEC-covering of $(\free, \fleq)$, where the latter is as in \ref{def_enough_strong}.
\end{context}

	\begin{construction}\label{construction:first_venue} Let $\lambda \geq \mrm{LS}(\mathcal{K}, \preccurlyeq)$ be an infinite cardinal and let $A \preccurlyeq \mathfrak{M}$ be the $\mathbf{V}$-free algebra freely generated by $\{a_i : i < \lambda\} \cup \{c_i : i < \omega\}$. 	For every $X \in \lambda^{[\aleph_0]}$, let $B_X, A_X, A'_X, B'_X, B^X$ be such that the following happens:
	\begin{enumerate}[(1)]
	\item $B_X \in \freeinf$ and $B_X$ is freely generated by $\{b^X_i : i < \omega\}$;
	\item we have that $A_X := \langle a_{i} : i \in X \rangle_A \leq B_X$;
	\item the pair $A_X, B_X$ is as in \ref{RCP_proof}, and in particular $\mrm{dcl}^{\mrm{qf}}_{B_X}(A_X b^X_0) = B_X$;
	\item $B'_X \in \freeinf$ is freely generated by $\{b^X_i : i < \omega\} \cup \{c_i : i < \omega\}$;
	\item $A'_X \in \freeinf$ is freely generated by $\{a_i: i \in X\} \cup \{c_i : i < \omega\}$;
	\item $B^X \in \freeinf$ is freely generated by $\{a_i : i < \lambda, i \notin X\} \cup \{b^X_i : i < \omega\} \cup \{c_i : i < \omega\}$;
	\item if $Y \in \lambda^{[\aleph_0]}$, then $\langle B'_XB^Y\rangle_{\mathfrak{M}} = B'_X\ast_{A'_{X \cap Y}}B^Y$, where $A'_{X \cap Y}$ is freely generated by $  \{a_i : i\in X\cap Y\} $ and $ \{c_i : i<\omega \}  $;
	\item $B^X \preccurlyeq \mathfrak{M}$.
\end{enumerate}
\end{construction}

	\begin{claim}\label{claim_proof_1+} For every $X \in \lambda^{[\aleph_0]}$, $A \preccurlyeq B^X \in \mathcal{K}$.
\end{claim}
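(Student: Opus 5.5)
The plan is to show that $A \fleq B^X$, and then to use clause (3) of Definition~\ref{def:AEC_covering} to upgrade this to $A \preccurlyeq B^X$. First, $B^X$ is freely generated by $\{a_i : i \notin X\} \cup \{b^X_i : i<\omega\} \cup \{c_i : i<\omega\}$. Since free products of free algebras are free on the union of the bases, we can write
\[B^X = F_{\mathbf{V}}(\{a_i : i\notin X\}) \ast B_X \ast F_{\mathbf{V}}(\{c_i : i<\omega\}).\]
This lies in $\freeinf$, and hence in $\mathcal{K}$ by clause (1) of Definition~\ref{def:AEC_covering}. The inclusion $A \leq B^X$ should come from Construction~\ref{construction:first_venue}(2): $A_X \leq B_X$, so every generator $a_i$ with $i \in X$ lies in $B_X \subseteq B^X$. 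The remaining generators $a_i$ with $i \notin X$, and the $c_i$, are themselves generators of $B^X$.

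The decisive point is that $A \fleq B^X$ actually fails in general. Indeed, $A = A_X \ast F(\{a_i : i\notin X\}) \ast F(\{c_i\})$ and $B^X = B_X \ast F(\{a_i : i\notin X\}) \ast F(\{c_i\})$. If $A \fleq B^X$ held, then by cancelling the common free factors we would expect $A_X \fleq B_X \ast C$ for some free $C$, which contradicts RCP(b). So the intended argument cannot go through $\fleq$ directly. Instead I would use the clause of the covering that relates $\preccurlyeq$ to free products, namely Definition~\ref{def_enough_strong}(i): from $A_0 \preccurlyeq B_0$ in $\freeinf$ one gets $A_0 \ast C \preccurlyeq B_0 \ast C$. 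With $C = F(\{a_i : i \notin X\} \cup \{c_i\})$, it then suffices to prove $A_X \preccurlyeq B_X$ for the RCP pair.

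To show $A_X \preccurlyeq B_X$, I would exploit RCP(a). Write $A_X = \bigcup_n A_{X,n}$, where $A_{X,n}$ is generated by the first $n+1$ generators of $A_X$. Each $A_{X,n}$ is a free factor of $B_X$ with free complement, so $A_{X,n} \fleq B_X$ and therefore $A_{X,n} \preccurlyeq B_X$ by clause (3) of Definition~\ref{def:AEC_covering}. The chain $(A_{X,n})_n$ is an increasing chain of free algebras. Is it a $\preccurlyeq$-chain? Yes: $A_{X,n} \fleq A_{X,n+1}$ because they are generated by initial segments of a free basis, so clause (3) applies again. I must check that each $A_{X,n}$ is in $\mathcal{K}$; this holds provided it has infinite rank. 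If the ranks are finite, I would first take free products with $C$ everywhere, i.e.\ work with $A_{X,n} \ast C$ and $B_X \ast C$, which are of infinite rank and hence in $\freeinf \subseteq \mathcal{K}$. The chain is continuous at $\omega$, since its union is $A_X$ (respectively $A_X \ast C = A$).

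Smoothness, Axiom (4.3) of Definition~\ref{def_AEC}, then yields $\bigcup_n (A_{X,n} \ast C) = A \preccurlyeq B_X \ast C = B^X$. This bypasses (i) entirely, and I would present it that way: the whole proof is the smoothness argument applied to the chain $A_{X,n}\ast C$ inside $B^X$. The main obstacle is carefully verifying that $A_{X,n} \ast C \fleq B_X \ast C$ from RCP(a), using the notation of RCP(a) as "$A_i \fleq B$", and that these objects are genuinely in $\free\cap\mathcal{K}$, so that clause (3) of Definition~\ref{def:AEC_covering} applies.
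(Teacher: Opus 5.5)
Your final argument is correct, but it is organised differently from the paper's.

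\emph{The paper's route.} It adjoins only the $c_k$'s to the initial segments of the basis of $A_X$, which guarantees infinite rank. It then uses RCP(a), clause (3) of Definition~\ref{def:AEC_covering} and smoothness to obtain $A'_X \preccurlyeq B'_X$. Finally it invokes Definition~\ref{def_enough_strong}(i) with the factor $\langle a_i : i\notin X\rangle$ to conclude $A \preccurlyeq B^X$.

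\emph{Your route.} You put the whole complement $C=\langle \{a_i : i\notin X\}\cup\{c_k : k<\omega\}\rangle$ into the chain from the start. From $B_X = A_{X,n}\ast D_n$ with $D_n\in\free$ you get $B^X = B_X\ast C = (A_{X,n}\ast C)\ast D_n$. So each $A_{X,n}\ast C\in\freeinf$ satisfies $A_{X,n}\ast C\fleq B^X$, hence $A_{X,n}\ast C\preccurlyeq B^X$, and smoothness gives $A\preccurlyeq B^X$ directly.

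\emph{What each buys.} This is the same smoothness idea, but your version gains a little.
\begin{enumerate}[(1)]
\item Claim~\ref{claim_proof_1+} then holds for every AEC-covering, with no appeal to the strong-enough clause (i).
\item It avoids the degenerate case $\lambda=\aleph_0$ with $\lambda\setminus X$ finite and nonempty. There $\langle a_i : i\notin X\rangle\notin\freeinf$, so (i) does not literally apply.
\end{enumerate}
The paper's organisation buys only modularity: it proves a statement about the countable pair $A'_X, B'_X$ and transports it by (i).

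\emph{Write-up.} Drop the abandoned plan via $A\fleq B^X$ and the detour through $A_X\preccurlyeq B_X$, and present only the final chain argument. Your side remark needs no cancellation. If $B^X = A\ast E$, then $B_X\ast C = A_X\ast(C\ast E)$, i.e.\ $A_X\fleq B_X\ast C$, which contradicts RCP(b) at once.
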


	\begin{proof} The fact that $B^X \in \mathcal{K}$ is clear, since $B^X \in \freeinf \subseteq \mathcal{K}$. Concerning the fact that $A \preccurlyeq B^X$, it suffices to show that $A'_X \preccurlyeq B'_X$, since then by \ref{def_enough_strong}(i) it follows that $A \preccurlyeq B^X$. Let $X=\{\alpha_i : i<\omega\}$ be an enumeration of $X$, then $A'_X \preccurlyeq B'_X$ is clear, since, for every $i < \omega$, we have that:
	$$A'_{X, i}  := \langle \{ a_{\alpha_j} : j \leq i \} \cup \{c_k : k < \omega\}\rangle_A \in \freeinf,$$
	$$A'_{X, i} \fleq B'_X \in \freeinf,$$
	because of \ref{construction:first_venue}(3), and thus $\bigcup_{i < \omega} A'_{X, i} = A'_X \preccurlyeq B'_X$, as $(\mathcal{K}, \preccurlyeq)$ is an $\mrm{AEC}$-covering of $(\free, \fleq)$.
	\end{proof}
	
%

	\begin{claim}\label{crucial_claim} If $X, Y \in \lambda^{[\aleph_0]}$, $X\neq Y$ and $|X \cap Y| < \aleph_0$, then in $(\mathcal{K}, \preccurlyeq)$ the Galois type of $b^X_0$ over $A$ is different from the Galois type of $b^Y_0$ over $A$.
\end{claim}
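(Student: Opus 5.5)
Suppose towards a contradiction that $\mathrm{tp}(b^X_0/A)=\mathrm{tp}(b^Y_0/A)$. By the definition of Galois types in the monster model (Notation~\ref{monster_model}), there is then an automorphism $f$ of $\mathfrak{M}$ fixing $A$ pointwise with $f(b^X_0)=b^Y_0$. The plan is to show that $f$ forces $B_X$ to live inside $\mathrm{dcl}^{\mrm{qf}}$ of a set in which $A_X$ cannot sit in the way RCP forbids.

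\textbf{Step 1: transport the closure.} By \ref{construction:first_venue}(3) we have $\mrm{dcl}^{\mrm{qf}}_{B_X}(A_Xb^X_0)=B_X$. Since $B_X\leq B'_X\leq B^X\preccurlyeq\mathfrak{M}$, condition \ref{def_enough_strong}(ii) (applied to the RCP pair $A_X,B_X$) gives $\mrm{dcl}^{\mrm{qf}}_{\mathfrak{M}}(A_Xb^X_0)=B_X$. Here the argument may need the intermediate step $B_X\preccurlyeq B^X$, obtained from \ref{def_enough_strong}(i) or, as in Claim~\ref{claim_proof_1+}, from $B_X$ being a free factor of $B^X$. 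The automorphism $f$ fixes $A_X\subseteq A$ and preserves quantifier-free definable closure. Hence
\[
f(B_X)=\mrm{dcl}^{\mrm{qf}}_{\mathfrak{M}}(A_Xb^Y_0)\subseteq \mrm{dcl}^{\mrm{qf}}_{\mathfrak{M}}(A\,b^Y_0)\subseteq B^Y .
\]
The last inclusion uses $A\preccurlyeq B^Y\preccurlyeq\mathfrak{M}$ (Claim~\ref{claim_proof_1+}), so that the relevant qf-definable elements over $Ab^Y_0$ lie in $B^Y$, again by (ii)-type reasoning. Condition (iii) may be needed here instead to identify the closure after moving $b_0$ by $f$.

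\textbf{Step 2: locate $A_X$ inside $B^Y$ via the free product.} By \ref{construction:first_venue}(6), $B^Y$ is freely generated by $\{a_i:i\notin Y\}\cup\{b^Y_i\}\cup\{c_i\}$. Since $X\cap Y$ is finite, all but finitely many generators $a_i$ ($i\in X$) of $A_X$ are free generators of $B^Y$. Write $X\setminus Y=X_0$ and $Z=X\cap Y$ (finite). Then $A_{X_0}=\langle a_i:i\in X_0\rangle$ is a free factor of $B^Y$ with free complement. The image $f(B_X)\cong B_X$ contains $A_X$ in exactly the configuration of RCP, since $f$ fixes $A_X$. Moreover $f(B_X)\leq B^Y$. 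We would like to conclude that $A_X\fleq f(B_X)\ast C$ for some free $C$, contradicting RCP(b).

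\textbf{Step 3 (main obstacle).} One must extract from ``$A_{X_0}$ is a free factor of $B^Y$ and $f(B_X)\leq B^Y$'' a genuine splitting $A_X\fleq f(B_X)\ast C$. The plan is to take $C=B^Y$ itself (free) and consider $f(B_X)\ast B^Y$. The finitely many $a_i$ with $i\in Z$ cause no harm: by RCP(a), removing finitely many initial generators preserves the RCP situation, since $A_i\fleq B_X$ for finite stages. So we may reindex and assume $X\cap Y=\emptyset$ by passing to a tail of $X$, after which $A_X$ is a free factor of $B^Y$ with complement generated by the remaining generators. Then, inside $f(B_X)\ast B^Y$, the map identifying $A_X\leq f(B_X)$ with $A_X\leq B^Y$ should exhibit $A_X$ as a free factor with free complement, contradicting RCP(b). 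Making this free-product manipulation rigorous, possibly using \ref{construction:first_venue}(7) to control $\langle B'_XB^Y\rangle$ as an amalgamated product over $A'_{X\cap Y}$, is where I expect the real work lies.
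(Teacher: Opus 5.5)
\textbf{There is a genuine gap in your Step~3, and the fix you sketch for it does not work.}

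You have $f(B_X)\leq B^Y$ and that (part of) $A_X$ is a free factor of $B^Y$. From this alone you cannot get $A_X\fleq f(B_X)\ast C$. That would need $f(B_X)$ itself to be a free factor of $B^Y$ with free complement. Nothing in the hypotheses turns the bare inclusion $f(B_X)\leq B^Y$ into such a splitting: for groups a Kurosh-type theorem would do it, but the claim is for an arbitrary $\mathbf{V}$.

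The manipulation inside $f(B_X)\ast B^Y$ does not rescue this. Identifying the two copies of $A_X$ produces $f(B_X)\ast_{A_X}B^Y$. If $B^Y=A_X\ast E$, this is just $f(B_X)\ast E$, which is exactly the configuration where RCP(b), transported by $f$, says $A_X$ is \emph{not} a free factor. So no contradiction arises.

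Passing to a tail of $X$ to get $X\cap Y=\emptyset$ is unjustified (the tail can lose RCP(c), and survival of RCP(b) is unclear). It is also unnecessary. $X\cap Y$ lies in an initial segment $\{\alpha_j:j\leq k\}$ of the enumeration of $Y$, so $\langle a_i:i\in X\cap Y\rangle$ is generated by part of a basis of $\langle a_{\alpha_j}:j\leq k\rangle\fleq B_Y$, hence is a free factor of $B_Y$. Together with $B^Y=B_Y\ast F_{\mathbf{V}}(\{a_i:i\notin Y\}\cup\{c_i:i<\omega\})$ from Construction~\ref{construction:first_venue}(6), this gives $A_X\fleq B^Y$ directly, without Construction~\ref{construction:first_venue}(7). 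This is precisely where $|X\cap Y|<\aleph_0$ is used.

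\textbf{The missing idea is to transport all of $B^X$, not just $B_X$.}

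$B^X$ is generated by $A\cup B_X$, $f$ fixes $A$ pointwise, and $A\leq B^Y$. So your Step~1 already gives $f(B^X)\subseteq B^Y$. Running the same argument for $f^{-1}$, which fixes $A$ and sends $b^Y_0$ to $b^X_0$, gives $f^{-1}(B^Y)\subseteq B^X$. Hence $f(B^X)=B^Y$.

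Now $f^{-1}\restriction B^Y$ is an isomorphism onto $B^X$ fixing $A_X$ pointwise. So $B^Y=A_X\ast E$ yields $B^X=A_X\ast f^{-1}(E)$, that is, $A_X\fleq B^X=B_X\ast F_{\mathbf{V}}(\{a_i:i\notin X\}\cup\{c_i:i<\omega\})$. This contradicts RCP(b) for the pair $(A_X,B_X)$ from Construction~\ref{construction:first_venue}(3).

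This is the mechanism of the paper's proof. It shows that $f$ carries the whole of $B^Y$ onto $B^X$ via the quantifier-free definable closures of $Ab_0$, then transports the free-factor position of $A'_X$ across $f$. The inclusion $\mrm{dcl}^{\mrm{qf}}_{\mathfrak{M}}(Ab^Y_0)\subseteq B^Y$ that you hedge about in Step~1 is exactly what the paper asserts from the strong-enough hypothesis. So that is not where your argument breaks; Step~3 is.
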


	\begin{proof} By \ref{construction:first_venue}(8) we have that $B^X, B^Y \preccurlyeq \mathfrak{M}$. Recall that by \ref{claim_proof_1+} we have that $A \preccurlyeq B^X, B^Y$. Suppose that there is an automorphism $f$ of the monster model $\mathfrak{M}$ of $(\mathcal{K}, \preccurlyeq)$ that fixes $A$ pointwise and sends $b^Y_0$ to $b^X_0$. Then clearly, recalling that $\mrm{dcl}^{\mrm{qf}}_{B_Y}(A_X b^Y_0) = B_Y$, we also have that $\mrm{dcl}^{\mrm{qf}}_{B^Y}(A b^Y_0) = B^Y$. Since we are assuming that the covering $(\mathcal{K}, \preccurlyeq)$  of $(\free, \fleq)$ is as in \ref{def_enough_strong}, $f$ maps $\mrm{dcl}_{\mathfrak{M}}(Ab^Y_0) = \mrm{dcl}_{B^Y}(Ab^Y_0) = B^Y$ to $\mrm{dcl}_{\mathfrak{M}}(Ab^X_0) = \mrm{dcl}_{B^X}(Ab^X_0) = B^X$.  
		
	\smallskip\noindent Notice now that, since $|X \cap Y| < \aleph_0$, it follows from  \ref{construction:first_venue}(7) that $\langle B'_XB^Y\rangle_{\mathfrak{M}} = B'_X\ast_{A'_{X \cap Y}}B^Y$. It clearly follows that $D\coloneqq \langle A'_XB^Y\rangle_{\mathfrak{M}} = A'_X\ast_{A'_{X \cap Y}}B^Y$ and thus $A'_X \fleq A'_X \ast_{A'_{X\cap Y}} B^Y $. Then, we have that $A'_X, B^Y \fleq \langle A'_XB^Y\rangle_{\mathfrak{M}}$ and so, by applying the automorphism $f$, we have that $f(A'_X), f(B^Y) \fleq f(D)\in\free$.  But by the assumptions from \ref{def_enough_strong} this means that $f(A'_X) = A'_X$ and $f(B^Y) = B^X$, and so $A'_X, B^X \fleq f(D)$. But this means in particular that $A_X \fleq B_X \ast C$, for some $C \in \free$, thus contradicting \ref{construction:first_venue}(3).
\end{proof}	

	\begin{thmn}[\ref{main_th1}] Let $\mathbf{V}$ be a countable variety of algebras which satisfies $\mrm{(RCP)}$ (cf.~\ref{RCP_proof}) and let $(\mathcal{K}, \preccurlyeq)$ be a strong enough $\mrm{AEC}$-covering of $(\free, \fleq)$ (cf.~\ref{def_enough_strong}). Then $(\mathcal{K}, \preccurlyeq)$ is not superstable.
\end{thmn}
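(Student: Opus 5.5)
The plan is to show that for a proper class of cardinals $\lambda$ (for instance every $\lambda = \mu^{\aleph_0}$ with $\mu \geq \mrm{LS}(\mathcal{K}, \preccurlyeq)$, or every $\lambda$ with $\lambda^{\aleph_0} > \lambda$) there is a model of size $\lambda$ over which there are more than $\lambda$ Galois types. This contradicts $\lambda$-stability on a tail of cardinals. The model is the free algebra $A$ from Construction~\ref{construction:first_venue}, generated by $\{a_i : i < \lambda\} \cup \{c_i : i < \omega\}$. It has size $\lambda$ because $\mathbf{V}$ is countable.

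Since $\freeinf \subseteq \mathcal{K}$ and the monster model is universal, we may assume $A \preccurlyeq \mathfrak{M}$. By Claim~\ref{claim_proof_1+} the types over $A$ make sense. By Claim~\ref{crucial_claim}, whenever $X, Y \in \lambda^{[\aleph_0]}$ are distinct with finite intersection, $\mrm{tp}(b^X_0/A) \neq \mrm{tp}(b^Y_0/A)$.

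So the remaining task is combinatorial: find an almost disjoint family $\mathcal{A} \subseteq \lambda^{[\aleph_0]}$ with $|\mathcal{A}| > \lambda$.

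1. Take $\lambda$ with $\mathrm{cf}(\lambda) = \omega$, for instance $\lambda = \beth_{\omega}(\mu)$ or $\aleph_{\alpha + \omega}$ above $\mrm{LS}(\mathcal{K}, \preccurlyeq)$. Such cardinals form a proper class.
2. For these $\lambda$ the tree argument gives $\lambda^{<\mathrm{cf}\lambda}$... More robustly, take $\lambda = 2^{\aleph_0}$-type cardinals: identify $\lambda$ with the tree $\mu^{<\omega}$ for any $\mu$ with $\mu^{<\omega} = \mu = \lambda$.
3. The branches of $\mu^{<\omega}$ give $\mu^{\aleph_0}$ countable subsets of the nodes, pairwise with finite intersection.
4. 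Hence there are at least $\lambda^{\aleph_0}$ distinct types over $A$. Whenever $\lambda^{\aleph_0} > \lambda$ (e.g. $\mathrm{cf}(\lambda) = \omega$, by König), this shows $(\mathcal{K}, \preccurlyeq)$ is not $\lambda$-stable.

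Since cardinals of countable cofinality are unbounded, $\lambda$-stability fails on an unbounded class of cardinals. So it fails on every tail, and the AEC is not superstable.

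The main obstacle is not the counting but ensuring Construction~\ref{construction:first_venue} can actually be realized simultaneously for all $X$ in the almost disjoint family inside $\mathfrak{M}$, particularly item (7) for all pairs and item (8).

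I would first try building everything inside one big free algebra. Start from $A$ and freely adjoin, for each $X \in \mathcal{A}$, a copy of $B_X$ amalgamated over $A_X$, forming the free product of the $B'_X$ over $A'_X$, with the extra generators $a_i$, $i \notin X$. The amalgamated-free-product structure then yields (7).

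For (8), I would embed this free algebra strongly into $\mathfrak{M}$ by universality. Then each $B^X$ is a free factor of it, and $\fleq$ implies $\preccurlyeq$ by Definition~\ref{def:AEC_covering}(3), so $B^X \preccurlyeq \mathfrak{M}$.

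The size of this algebra exceeds $\lambda$, but that is harmless: types are taken over $A$, which has size $\lambda$, and the monster model is large enough.
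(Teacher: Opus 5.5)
Your counting matches the paper's proof. It uses Construction~\ref{construction:first_venue}, Claims~\ref{claim_proof_1+} and~\ref{crucial_claim}, and an almost disjoint family in $\lambda^{[\aleph_0]}$ of size $\lambda^{\aleph_0}$, namely the branches of $\lambda^{<\omega}$, taken at cardinals of countable cofinality. This works for every infinite $\lambda$, since $\lambda^{<\omega}=\lambda$. Drop the example $\lambda=\mu^{\aleph_0}$: there $\lambda^{\aleph_0}=\lambda$.

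The genuine gap is the step you add beyond the paper: realizing the whole family inside one big free algebra $F$, obtained by amalgamating $A$ with all the $B_X$ over the $A_X$. Once $|\mathcal{A}|>\lambda$, no such free algebra exists, already for abelian groups. Take the witness of Corollary~\ref{cor}, $a_{\alpha_j}=b^X_j-2b^X_{j+1}$ where $X=\{\alpha_j:j<\omega\}$, so that $b^X_0\equiv 2^nb^X_n\pmod{A}$ for every $n$.

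If $F$ were free abelian, write $F=F_0\oplus F_1$ with $F_0$ spanned by $\lambda$ basis elements and $A\subseteq F_0$. The $F_1$-component of each $b^X_0$ is divisible by every power of $2$ in a free abelian group, hence is $0$. So all the $b^X_0$ lie in $F_0$, a set of size $\lambda$, whereas you need more than $\lambda$ of them with pairwise distinct types.

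Hence $F\notin\free$. You can neither put $F$ into $\mathcal{K}$ via $\freeinf\subseteq\mathcal{K}$ nor get $B^X\fleq F$, and your derivation of (8) collapses. Your assertion that the amalgam yields (7) also fails. In fact (7) cannot hold literally, because $A'_X\subseteq A\subseteq B^Y$ forces $B'_X\cap B^Y\supseteq A'_X\supsetneq A'_{X\cap Y}$.

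The remedy is to give up on a common ambient algebra. For each $X$ separately, Claim~\ref{claim_proof_1+} gives $A\preccurlyeq B^X$. Universality and homogeneity of $\mathfrak{M}$ (Fact~\ref{monster_model_fact}) then give a strong embedding of $B^X$ into $\mathfrak{M}$ fixing $A$. This is (8), and the copies for distinct $X$ need bear no relation to one another.

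Nor is (7) needed. In the proof of Claim~\ref{crucial_claim} it only serves to produce $A'_X\fleq\langle A'_XB^Y\rangle_{\mathfrak{M}}$. Since $A'_X\subseteq B^Y$, this is just $A'_X\fleq B^Y$, which holds inside $B^Y$ alone. Indeed, $X\cap Y$ is finite, so $\{a_i:i\in X\cap Y\}$ is part of a basis of some initial segment $\langle a_{\alpha_j}:j\leq n\rangle$ of $A_Y$. That segment is a free factor of $B_Y$ by RCP(a), so these elements generate a free factor of $B_Y$. Consequently $\{a_i:i\in X\}\cup\{c_i:i<\omega\}$ extends to a basis of $B^Y$.

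Applying the automorphism $f$ (with $f(B^Y)=B^X$, as in the paper) gives $A'_X\fleq B^X$. Hence $A_X\fleq B_X\ast C$ with $C$ free on $\{a_i:i\notin X\}\cup\{c_i:i<\omega\}$, contradicting RCP(b) exactly as in the paper.
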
	
\begin{proof} This follows from Construction~\ref{construction:first_venue} and Claim~\ref{crucial_claim}, and by noticing that for an infinite cardinal $\lambda$ such that $\lambda^{\aleph_0} > \lambda$ we can find  a collection $\mathcal{X}$ of subsets of $\lambda^{[\aleph_0]}$ witnessing that $\lambda^{\aleph_0} > \lambda$ and such that for every $X, Y \in \mathcal{X}$ we have $|X \cap Y| < \aleph_0$.
\end{proof}

\subsection{Applications to $R$-modules}\label{sec_modules}

As we explained in the introduction, we next use Theorem~\ref{main_th1} to give a sufficient criterion for an $\mrm{AEC}$ of $R$-modules not to be superstable. Under the Condition~\ref{ring__def_perfect} of not being weakly left-perfect, this provides a generalisation of Fact~\ref{Armida_fact_nonsuper} from \cite{armida3} beyond the setting of $\mrm{AECs}$ of $R$-modules with the relation of pure embeddability.

	\begin{observation}\label{zero_divisors_assumption} Let $F = F(X)$ be a free $R$-module freely generated by $X$. Let $0 \neq a \in F$ and suppose that $ra = 0$. Without loss of generality, we have that $a = \sum_{i < n} s_i x_i$ with $0 \neq s_i \in R$, for all $i < n$ with $0 < n < \omega$. Then we have that:
$$ra = \sum_{i < n} rs_i x_i = 0,$$
but since $F$ is free this means that for all $i < n$ we have that $rs_i = 0$, so necessarily $r$ is a left annihilator of $s_i$ for all $i < n$. Thus, if $r$ is not a left zero-divisor of $R$ (i.e., $\nexists s \in R$, $rs = 0$), then $rx = 0$ has only the trivial solution, that is $x = 0$.

\smallskip 
\noindent	
	Suppose now that $r$ is not a left zero-divisor of $R$. Then
	$F(X) \models \exists! y (ry = r x_i)$.
To see this, let $b \in F$ be a solution, then $rb - rx_i = 0$ and so the above applies, i.e., we have that $rb - rx_i = r(b-x_i)$ and $b-x_i = 0$, i.e., $b = x_i$.
\end{observation}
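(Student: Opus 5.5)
The plan is to work entirely with coordinates with respect to the free basis $X$ of $F = F(X)$. The only fact about freeness we need is that every element of $F$ has a unique expression as a finite $R$-linear combination of distinct elements of $X$ with nonzero coefficients. In particular, a combination $\sum_{i<n} t_i x_i$ (with the $x_i \in X$ distinct) equals $0$ only if all $t_i = 0$.

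\textbf{Step 1 (annihilation).} Let $0 \neq a \in F$. Write $a = \sum_{i<n} s_i x_i$ with the $x_i$ distinct, $0 < n < \omega$, and all $s_i \neq 0$; this is possible because $a \neq 0$. Since $F$ is a left $R$-module,
$$ra = \sum_{i<n} (r s_i)\, x_i .$$
If $ra = 0$, uniqueness of coordinates gives $r s_i = 0$ for every $i < n$. Each $s_i$ is nonzero, so $r$ is then a left zero-divisor. Contrapositively, if $r$ is not a left zero-divisor (no $s \neq 0$ with $rs = 0$), then $ra \neq 0$ for every $a \neq 0$. Thus $rx = 0$ has only the solution $x = 0$ in $F$.

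\textbf{Step 2 (unique division).} Fix $x_i \in X$ and assume $r$ is not a left zero-divisor. Existence of a solution to $ry = r x_i$ is witnessed by $y = x_i$. For uniqueness, let $b \in F$ satisfy $rb = r x_i$. Then $r(b - x_i) = rb - r x_i = 0$ by distributivity, so Step 1 gives $b - x_i = 0$, that is, $b = x_i$. Hence $F(X) \models \exists! y\,(ry = r x_i)$. The same argument works with $x_i$ replaced by any element of $F$.

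I expect no real obstacle here. The only point requiring care is sidedness. Since scalars act on the left, the product that appears is $r s_i$, so the correct hypothesis is that $r$ is not a \emph{left} zero-divisor, i.e.\ $r$ has trivial right annihilator. One must keep this convention consistent when the observation is later applied to products $r_0 \cdots r_m$ as in \ref{ring__def_perfect}. The intended use is that $x_i$ lies in $\mrm{dcl}^{\mrm{qf}}$ of $r x_i$, which will feed into verifying clause (c) of $\mrm{RCP}$ for $\mathbf{V}_R$.
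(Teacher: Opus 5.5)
Your proof is correct and follows the paper's argument essentially verbatim: expand $a$ in the free basis, use uniqueness of coordinates to get $rs_i = 0$ for all $i$, and then apply this to $b - x_i$ to get uniqueness of the solution of $ry = rx_i$. Your reading of ``not a left zero-divisor'' as ``no $s \neq 0$ with $rs = 0$'' is the right one (the paper's parenthetical omits the condition $s \neq 0$), and you also state existence via $y = x_i$, which the paper leaves implicit.
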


\begin{defn}[\ref{ring__def_perfect}] We say that a ring $R$ is \emph{weakly left-perfect} if for any sequence $(r_n : n < \omega) \in R^\omega$ such that, for every $m < \omega$, $r_0 \cdots r_m$ is not a zero-divisor we have that the sequence of principal ideals $\{ r_0 \cdots r_mR : m < \omega\}$ eventually stabilises.
\end{defn}

	\begin{proposition}\label{rings:RCP} Let $R$ be a non weakly left-perfect ring. Then $R$ satisfies the Reinforced Construction Principle.
\end{proposition}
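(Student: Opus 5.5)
We follow the classical Eklof--Mekler construction witnessing CP for non-perfect rings, and add a check of the definable-closure clause (c). Since $R$ is not weakly left-perfect, fix $(r_n : n<\omega)$ such that each product $s_m := r_0\cdots r_m$ is not a zero-divisor and the chain $s_0R \supseteq s_1R \supseteq \cdots$ does not stabilise. (We work with left $R$-modules written with scalars on the left; sidedness conventions will be adjusted to match the definition as stated.)

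\emph{The construction.} Let $B$ be free on $\{b_i : i<\omega\}$ and set
$$a_i := b_i - r_i b_{i+1}, \qquad A := \langle a_i : i<\omega\rangle_B.$$

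\emph{Conditions (a) and (b).} For (a), note that for each $i$ the set $\{a_0,\dots,a_i, b_{i+1}, b_{i+2},\dots\}$ is again a basis of $B$. Indeed, the substitution $b_j = a_j + r_j b_{j+1}$ for $j\le i$ is unitriangular. Hence $A_i$ is freely generated by $a_0,\dots,a_i$, and it is a free factor of $B$ with free complement $\langle b_j : j>i\rangle$. The same computation shows that the $a_i$ are free generators of $A$. For (b), we use the standard argument: if $A\fleq B\ast C$, then $(B\oplus C)/A$ is free, hence flat, and therefore $B/A$ is flat. But $B/A$ is the direct limit of $R \xrightarrow{r_0} R \xrightarrow{r_1} \cdots$ (the image of $b_0$ equals $r_0 r_1\cdots r_m$ times the image of $b_{m+1}$). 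By the standard Bass-type argument, the failure of the descending chain $s_mR$ to stabilise prevents this module from being projective. Freeness would give projectivity, so this is a contradiction. I would cite \cite{EM2} for this part rather than redo it.

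\emph{Condition (c).} We must show $\mrm{dcl}^{\mrm{qf}}_B(Ab_0)=B$. By induction, $b_0 = \sum_{j\le m} s_{j-1}a_j + s_m b_{m+1}$, with $s_{-1}=1$. Hence $s_m b_{m+1} = b_0 - \sum_{j\le m} s_{j-1} a_j =: t_m$ lies in $\langle Ab_0\rangle$. Since $s_m$ is not a zero-divisor, Observation~\ref{zero_divisors_assumption} shows that $b_{m+1}$ is the unique solution of the quantifier-free equation $s_m y = t_m$. Since $t_m$ is a term in parameters from $Ab_0$, this makes $b_{m+1}$ quantifier-free definable over $Ab_0$. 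Thus every $b_i$ lies in $\mrm{dcl}^{\mrm{qf}}_B(Ab_0)$, and so $\mrm{dcl}^{\mrm{qf}}_B(Ab_0)=B$.

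\emph{Main obstacle.} Step (b) is the real content and is where the non-stabilisation hypothesis is used. Its delicate point is that the standard proof of non-projectivity is stated for left perfectness with arbitrary sequences. I expect it to go through verbatim for this particular sequence, since only the chain $s_mR$ enters. Some care with left/right conventions is also needed so that the chain condition matches the direct limit.
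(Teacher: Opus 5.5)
Your proposal is correct and follows the paper's proof. It uses the same witness $a_i = b_i - r_ib_{i+1}$, handles (a) and (b) via the Eklof--Mekler/Bass argument from \cite{EM2}, and gets (c) from Observation~\ref{zero_divisors_assumption}, with your identity $s_m b_{m+1} = b_0 - \sum_{j\le m} s_{j-1}a_j$ making explicit what the paper leaves to the reader; the flatness detour in (b) is superfluous, since the contradiction comes directly from $B/A$ being a direct summand of the free module $(B\oplus C)/A\cong (B/A)\oplus C$, hence projective, contrary to Bass's lemma.
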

	\begin{proof} Let $(r_n : n < \omega) \in R^\omega$ be a witness of non weak left-perfectness, i.e., for all $m < \omega$, we have that $r_0 \cdots r_m$ is not a zero divisor of $R$ and $r_0 \cdots r_mR \supsetneq r_0 \cdots r_{m+1}R$. Let $B$ be freely generated by $\{b_i : i < \omega\}$ and, for $i < \omega$, let $a_i = b_i - r_{i}b_{i+1}$. Let $A = \langle a_i : i < \omega \rangle_B$. Then $A \leq B$ are as wanted. To see that conditions (a) and (b) of \ref{RCP_proof} are satisfied it suffices to recall the proof of \cite[1.1, p.~193]{EM2}. To see that conditions (c) of \ref{RCP_proof} is satisfied it suffices to use Observation~\ref{zero_divisors_assumption}.
	\end{proof}

	\begin{corl}[\ref{main_th}] Let $R$ be a ring and let $\mathbf{V}_R$ be the variety of all $R$-modules. If $R$ is not weakly left-perfect, then no strong enough  $\mrm{AEC}$-covering of $(\mathcal{F}_{\mathbf{V}_R}, \fleq)$ is superstable.
	\end{corl}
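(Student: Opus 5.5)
This corollary follows by chaining the two results already proved. The plan is to verify the hypotheses of Theorem~\ref{main_th1} for the variety $\mathbf{V}_R$ and then apply it directly.

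First, $\mathbf{V}_R$ is a variety of algebras: its language consists of $+$, $-$, $0$ and one unary scalar multiplication for each $r \in R$. Theorem~\ref{main_th1} and Definition~\ref{RCP_proof} were stated for countable varieties, so the countability of the language has to be addressed. If $R$ is countable there is nothing to do. Otherwise I would use the remark in \S\ref{intro:first_venue} that RCP and the main theorem extend to languages of arbitrary size.

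The argument of Construction~\ref{construction:first_venue} and Claim~\ref{crucial_claim} never uses countability of the language, only countability of $A$, $B$ and of the index sets $X$. So it suffices to choose $\lambda \geq \mrm{LS}(\mathcal{K}, \preccurlyeq) \geq |R| + \aleph_0$ with $\lambda^{\aleph_0} > \lambda$. One can take $\lambda = \mu^{+\omega}$ for any large $\mu$, since $\mathrm{cf}(\lambda) = \omega$. This choice also gives non-stability on a tail of cardinals.

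Second, Proposition~\ref{rings:RCP} gives that $\mathbf{V}_R$ satisfies RCP whenever $R$ is not weakly left-perfect. Its witnesses are $B$ free on $\{b_i : i < \omega\}$ and $a_i = b_i - r_i b_{i+1}$. Condition (c) comes from Observation~\ref{zero_divisors_assumption}: because $r_i$ and the products $r_0 \cdots r_m$ are not zero-divisors, $b_{i+1}$ is the unique solution $y$ of $r_i y = b_i - a_i$. Hence each $b_{i+1}$ is quantifier-free definable from $A b_0$ by induction on $i$.

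Finally, let $(\mathcal{K}, \preccurlyeq)$ be a strong enough AEC-covering of $(\mathcal{F}_{\mathbf{V}_R}, \fleq)$. Theorem~\ref{main_th1} then applies and shows that $(\mathcal{K}, \preccurlyeq)$ is not superstable. Concretely, for each $\lambda$ as above, Claim~\ref{crucial_claim} together with an almost-disjoint family of $\lambda^{\aleph_0} > \lambda$ countable subsets of $\lambda$ yields more than $\lambda$ distinct Galois types over a model of size $\lambda$.

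The only step that is not bookkeeping is the possible uncountability of $R$, that is, confirming that the proof of Theorem~\ref{main_th1} goes through unchanged for large languages. I expect no obstruction there, since the language size enters only through the Löwenheim–Skolem number, and that is absorbed by taking $\lambda$ large.
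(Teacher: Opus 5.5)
Your proposal is correct and follows the paper's own argument, which is exactly the one-line combination of Proposition~\ref{rings:RCP} (RCP holds when $R$ is not weakly left-perfect) with Theorem~\ref{main_th1}. Your discussion of uncountable $R$ goes beyond what the paper writes, since the paper applies a theorem stated for countable varieties without comment. Two small points on that discussion: ``countable $A,B$'' in RCP must then be read as ``countably generated'', and the choice $\lambda=\mu^{+\omega}$ gives failure of $\lambda$-stability on a cofinal class of cardinals rather than on a tail, which is exactly what non-superstability requires.
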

	\begin{proof}
	By Proposition~\ref{rings:RCP} and Theorem~\ref{main_th}.
	\end{proof}

\subsection{Applications to groups}\label{sec_groups}

	A second consequence of Theorem~\ref{main_th1} is in the setting of groups. We prove the following corollary and we stress that, in particular, it covers the two key cases of the varieties of groups and the variety of abelian groups.

	\begin{corl}[\ref{cor}] Let $\mathbf{V}$ be a torsion-free variety of groups such that that if $F_{\mathbf{V}}(X) \in \free$, $x \in X$, $y \in F_{\mathbf{V}}(X)$, $0 < n < \omega$, then $(x^n = y^n \Rightarrow x = y)$. Then $\mathbf{V}$ satisfies $\mrm{RCP}$ and so no strong enough  $\mrm{AEC}$-covering of $(\mathcal{F}_{\mathbf{V}_R}, \fleq)$ is superstable.
	\end{corl}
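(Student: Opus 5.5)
The plan is to mimic the proof of Proposition~\ref{rings:RCP}, using a witness sequence of powers in place of the non-perfect ring sequence. Fix a prime $p$ (any integer $n \geq 2$ would do). Let $B = F_{\mathbf{V}}(\{b_i : i < \omega\})$, set $a_i = b_i b_{i+1}^{-p}$ for $i<\omega$, and put $A = \langle a_i : i < \omega\rangle_B$. This is the classical Eklof--Mekler/Mekler witness of CP for torsion-free varieties, as in \cite{mekler2}. Conditions (a) and (b) of \ref{RCP_proof} will be quoted from there.

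For (a), the first step is to check that for each $i$ the set $\{a_0,\dots,a_i,b_{i+1}\}$ freely generates $B_{i+1}=\langle b_0,\dots,b_{i+1}\rangle_B$. This holds because the substitution $b_j \mapsto b_j b_{j+1}^{-p}$ for $j\le i$ is invertible by back-substitution. Hence $A_i \fleq B_{i+1}$, and $B_{i+1}$ is itself a free factor of $B$. The same substitution also shows that $\{a_i : i<\omega\}$ freely generates $A$.

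For (b) I will rely on Mekler's argument for torsion-free varieties. Suppose $A \fleq B \ast C$. Project $B\ast C$ onto the abelianisation, or more precisely onto the largest torsion-free abelian quotient, which is available since $\mathbf{V}$ is torsion-free and so contains abelian groups of exponent $0$. In $\mathbb{Z}^{(\omega)}\oplus C^{\mathrm{ab}}$ the images satisfy $\bar b_0 = \bar a_0 + p\bar a_1 + p^2 \bar a_2+\dots + p^{n}\bar b_{n+1}$. Then $\bar b_0$ is $p^n$-divisible modulo $\langle \bar a_0,\dots,\bar a_{n-1}\rangle$ for every $n$. This is incompatible with $\bar A$ being a direct summand with free complement, by the usual Pontryagin-type argument.

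The genuinely new part is (c): showing $\mrm{dcl}^{\mrm{qf}}_B(Ab_0)=B$. By induction on $i$ it suffices to show that $b_{i+1}\in\mrm{dcl}^{\mrm{qf}}_B(A b_0\cdots b_i)$, since dcl is transitive. We have $b_{i+1}^{p} = a_i^{-1} b_i$. Consider the quantifier-free formula $\varphi(y) := (y^{p} = a_i^{-1}b_i)$, with parameters in $Ab_0\cdots b_i$, and note that $b_{i+1}$ satisfies it. If $y\in B$ also satisfies it, then $y^p = b_{i+1}^p$. Since $b_{i+1}$ is a free generator of $B=F_{\mathbf{V}}(\{b_j\})$, the hypothesis $(x^n=y^n\Rightarrow x=y)$ gives $y=b_{i+1}$. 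So $\varphi$ isolates $b_{i+1}$, and therefore $B=\langle b_i : i<\omega\rangle \subseteq \mrm{dcl}^{\mrm{qf}}_B(Ab_0)$. This parallels Observation~\ref{zero_divisors_assumption}, and it is exactly why the extra unique-roots hypothesis on generators is imposed.

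Once RCP is established, the final clause follows directly from Theorem~\ref{main_th1}. I expect the main obstacle to be (b) in full generality for non-abelian torsion-free varieties. The abelianisation argument requires that the free factor relation descend to direct summands in the torsion-free abelian quotient, and that this quotient still be free abelian on the images of generators. If that descent is delicate for a given $\mathbf{V}$, I would simply cite Mekler's theorem in \cite{mekler2} that torsion-free varieties satisfy CP. His witness can be taken to be this same pair $A\le B$, so (c) still applies to it.
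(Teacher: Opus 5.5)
Your proposal is correct and follows the paper's proof. You use the same witness $a_i=b_ib_{i+1}^{-p}$ (the paper takes $p=2$), quote CP from Mekler, and get (c) from the unique-roots hypothesis, which the paper simply calls obvious. Your abelianisation sketch for (b) in fact goes through without difficulty, since a non-trivial torsion-free $\mathbf{V}$ contains all abelian groups and abelianisation preserves $\mathbf{V}$-free products.

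One small repair: quantifier-free definable closure is not transitive in general, so you should not invoke transitivity. Instead, define $b_n$ directly over $Ab_0$ by the single equation $a_0(a_1(\cdots(a_{n-1}y^{p})^{p}\cdots)^{p})^{p}=b_0$. Its only solution is $b_n$, by extracting $p$-th roots of the free generators $b_1,\dots,b_n$ one at a time.
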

	
	\begin{proof} Let $B$ be the $\mathbf{V}$-free group freely generated by $\{b_i : i < \omega\}$. For $i < \omega$, let $a_i = b_ib^{-2}_{i+1}$ and let $A = \langle a_i : i < \omega \rangle_B$, then by \cite[p.~131]{mekler2} (cf.~also \cite[Ex.~4.5]{carolillo_paolini}) we have that the pair $(A, B)$ is a witness of CP. If in addition the condition in the statement of the corollary is satisfied, then obviously also condition \ref{RCP_intro}(c) is satisfied and so RCP holds. Therefore, it follows from Theorem~\ref{main_th} that no strong enough  $\mrm{AEC}$-covering of $(\mathcal{F}_{\mathbf{V}_R}, \fleq)$ is superstable.
\end{proof}

\begin{remark}
	Let $\mathbf{V}$ be any (countable) variety of algebras, and let $T$ be the first-order theory of the $\mathbf{V}$-free algebras, then $(\mrm{Mod}(T),\preccurlyeq_{\omega,\omega})$ is clearly a strong enough covering of  $(\mathcal{F}_{\mathbf{V}}, \fleq)$ in the sense of \ref{def_enough_strong}. Therefore, Corollary~\ref{main_th} and Corollary~\ref{cor} show  that $\mathbf{V}$ is not superstable, whenever $\mathbf{V}$ is the variety of $R$-modules for $R$ a non weakly left-perfect ring, or $\mathbf{V}$ is a torsion-free variety of groups such that that if $F_{\mathbf{V}}(X) \in \free$, $x \in X$, $y \in F_{\mathbf{V}}(X)$ and $0 < n < \omega$, then the equation $x^n = y^n$ implies that $x = y$.
\end{remark}

\section{The second venue}\label{sec:second_venue}
 
We consider now a second proof of non-superstability under the assumption of the satisfaction of CP for the variety $\mathbf{V}$. As explained in \ref{intro:second_venue}, we do not exhibit here $\lambda^{\aleph_0}$-many types over a structure of size $\lambda$, but rather we approach the problem from the viewpoint of the existence of an independence calculus (in the sense of \ref{def:good_frame}). Admittedly, the conditions from \ref{def:good_completion+} below are rather strong, but they display the fact that, in some circumstances, the original  $\mrm{CP}$ by Eklof and Mekler (and not $\mrm{RCP}$) already entails non-superstability. 

\begin{definition}\label{def:ast_independence}
	Suppose $(\mathcal{F}_{\mathbf{V}},\fleq)$ has an $\mrm{AEC}$-covering $\mathfrak{K} = (\mathcal{K},\preccurlyeq)$ which admits a weak independence calculus $(\mathcal{K},\preccurlyeq, \pureindep)$ (cf.~\ref{def:good_frame}), and let $A,B,C\in \mathcal{F}_{\mathbf{V}}$ with $A,B,C\preccurlyeq \mathfrak{M}_\mathfrak{K}$, $A\fleq B, C$ then we let 
	\begin{align*}
		B \ind^\ast_A C \Longleftrightarrow B\ast_AC=\langle B C\rangle_{\mathfrak{M}_{\mathfrak{K}}} \preccurlyeq \mathfrak{M}_\mathfrak{K}.
	\end{align*}
\end{definition}

\begin{definition}\label{def:good_completion+} Suppose $(\mathcal{K}, \preccurlyeq)$ is an AEC-covering of $(\freeinf, \fleq)$ admitting a weak independence calculus (cf. \ref{def:good_frame}) and denote by $\mathfrak{M}$ its monster model. We say that $(\mathcal{K}, \preccurlyeq, \pureindep)$ is {\em nice with respect to free factors} if the following conditions are satisfied:
	\begin{enumerate}[$(\star_1)$]
		\item there is $n_* < \omega$ such that for every $n_* \leq m < \omega$ we have $F_{\mathbf{V}}(m) \in \mathcal{K}$;
		\item there is $m_*<\omega$ such that, if $A,B,C\in \free$ with $A,B,C\preccurlyeq \mathfrak{M}_{\mathfrak{K}}$, $A\fleq B$, $A\fleq C$ and $A$ has rank at least $m_*$, then $B \pureindep[A] C $ if and only if $B \astindep[A] C$;
		\item if $A\in \freefg\cap \mathcal{K}$,  $B,C\in \free\cap \mathcal{K}$ with $A,B,C\preccurlyeq \mathfrak{M}_{\mathfrak{K}}$ and $A\fleq C$, then for every $c\in C$ such that $c\pureindep[A] B $ there is $D\in\freefg$ such that $c\in D$, $A\fleq D\fleq C$ and $D \pureindep[A] B $.
		\item if $A,B,C\in \freefg\cap \mathcal{K}$, $D\in\free\cap \mathcal{K}$ with $A,B,C,D\preccurlyeq \mathfrak{M}_{\mathfrak{K}}$, $A\fleq B\fleq D$, $A\fleq C\fleq D$ and $B\ind_A C$, then it follows that $B\ast_A C\fleq D$.
	\end{enumerate}	
\end{definition} 

\begin{context}\label{context_second_venue} In this section we assume that $\mathbf{V}$ is a countable variety of algebras satisfying $\mrm{CP}$, and that $(\mathcal{K}, \preccurlyeq)$ is an $\mrm{AEC}$-covering of $(\free, \fleq)$ admitting a weak independence calculus $(\mathcal{K}, \preccurlyeq, \pureindep)$ (cf. \ref{def:good_frame}) which is nice with respect to free factors (cf.~\ref{def:good_completion+}). Also, we denote by $\mathfrak{M}$ the monster model of $(\mathcal{K}, \preccurlyeq)$ (cf.~\ref{monster_model}).
\end{context}

\begin{lemma}\label{lemma_for_main_th} If $(A_i : i < \omega)$ and $(B_i : i < \omega)$ are $\fleq$-chains of objects from $\free\cap \mathcal{K}$, and for every $i < \omega$ we have that $A$ is a $\mathbf{V}$-free algebra or rank $\geq m_*$ (recall \ref{def:good_completion+}$(\star_2)$) such that $A_i\fleq B_i$ and $B_i \pureindep[A_i] \bigcup_{j < \omega} A_j$, then $\bigcup_{i < \omega} A_i \fleq \bigcup_{i < \omega} B_i$.
\end{lemma}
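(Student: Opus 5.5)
The plan is to build, by induction on $i$, a $\fleq$-increasing chain of free algebras $D_i$ with $A_\omega := \bigcup_{j<\omega} A_j \fleq D_i$ and $B_i \leq D_i$, arranged so that the union $D_\omega := \bigcup_i D_i$ equals $A_\omega \ast C$ with $C$ free and $D_\omega = B_\omega := \bigcup_i B_i$. Concretely, we aim to show
\[
\langle B_i\, A_\omega\rangle_{\mathfrak{M}} = B_i \ast_{A_i} A_\omega \qquad (i<\omega).
\]
Once this holds, write $B_i = A_i \ast C_i$ with $C_i$ free, using $A_i \fleq B_i$. Then $B_i \ast_{A_i} A_\omega = A_\omega \ast C_i$, so $A_\omega$ is a free factor of each $\langle B_i A_\omega\rangle$ with free complement $C_i$.

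The first step is to apply $(\star_2)$ at each level. Here $A_i$ has rank at least $m_*$, $A_i \fleq B_i$, and $A_i \fleq A_\omega$. For the last point: $A_i \fleq A_j$ for all $j\ge i$, and a union of a $\fleq$-chain of free algebras makes each member a free factor with free complement, since one concatenates the complements. Also $A_\omega \preccurlyeq \mathfrak{M}$, by the AEC axioms together with condition (3) of an AEC-covering. So $(\star_2)$ turns $B_i \pureindep[A_i] A_\omega$ into $B_i \astindep[A_i] A_\omega$. By Definition~\ref{def:ast_independence} this gives exactly the displayed amalgam, and moreover $E_i := \langle B_i A_\omega\rangle_{\mathfrak{M}} \preccurlyeq \mathfrak{M}$.

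The second step is compatibility of the complements, which I expect to be the main obstacle. The $E_i$ form an increasing chain with union $B_\omega$, because $A_\omega \subseteq B_\omega$. We need $E_i \fleq E_{i+1}$ via a free complement, in such a way that the complements of $A_\omega$ can be chosen coherently: $C_{i+1} = C_i \ast C'_i$ in $E_{i+1}$. My first attempt is to use $B_i \fleq B_{i+1}$ and the monotonicity and uniqueness of the ast-amalgam. We have $B_i \pureindep[A_i] A_\omega$ and $B_{i+1} \pureindep[A_{i+1}] A_\omega$. By monotonicity, $B_{i+1}$ is independent over $A_{i+1}$ from $A_\omega$, and $B_i$ lies inside $B_{i+1}$. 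Then $(\star_4)$, applied to finitely generated pieces via $(\star_3)$ if the algebras are not finitely generated, should show that $B_i \ast_{A_i} A_{i+1} \fleq B_{i+1}$. That is, $A_{i+1}\ast C_i \fleq B_{i+1} = A_{i+1} \ast C_{i+1}$, and one may then choose $C_{i+1}$ so that it extends $C_i$ as a free factor.

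If this works, then $B_\omega = \bigcup_i (A_\omega \ast C_i) = A_\omega \ast \bigcup_i C_i$, where $\bigcup_i C_i$ is a union of a free-factor chain of free algebras and hence free. This yields $A_\omega \fleq B_\omega$.

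The delicate point is exactly the coherent choice of complements $C_i \fleq C_{i+1}$. Freeness of the union requires this, and it is the only place where the independence hypothesis must be used beyond a single level. If the direct argument fails, I would fall back on building the $C_i$ by recursion and invoking $(\star_4)$ with $A = A_i$, $B = B_i\ast_{A_i}A_{i+1}$, $C = A_{i+1}$, and $D = B_{i+1}$.
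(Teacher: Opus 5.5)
Your first step is the same as the paper's: $(\star_2)$ turns $B_i\pureindep[A_i]A_\omega$ into $\langle B_iA_\omega\rangle=B_i\ast_{A_i}A_\omega=A_\omega\ast C_i\preccurlyeq\mathfrak{M}$. After that the two arguments diverge.

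The paper writes $B_i=A_i\ast E_i$, $A_{j+1}=A_j\ast D_j$ and $B_{j+1}=B_j\ast C_j$ (so its $C_j$ are not your $C_i$). It then simply asserts $B=(E_i\ast A)\ast C_i\ast C_{i+1}\ast\cdots$. Read literally, this fails as soon as $A\neq A_i$. Indeed $B=B_i\ast C_i\ast C_{i+1}\ast\cdots$ already, so $B$ cannot also be the free product of the strictly larger $B_i\ast_{A_i}A=B_i\ast D_i\ast D_{i+1}\ast\cdots$ with the same $C_j$. What is tacitly needed there is exactly the coherence of complements you single out. So your second step is the substantive part of the proof, not a detour.

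$(\star_4)$ is the right tool for it, with the instance $A=A_i$, $B=B_i$, $C=A_{i+1}$, $D=B_{i+1}$.
\begin{itemize}
\item The hypotheses are $A_i\fleq B_i\fleq B_{i+1}$, $A_i\fleq A_{i+1}\fleq B_{i+1}$ and $B_i\pureindep[A_i]A_{i+1}$.
\item The last one comes by monotonicity from $B_i\pureindep[A_i]A_\omega$. It is not the independence of $B_{i+1}$ over $A_{i+1}$ that you cite, and your ``fallback'' assignment $B=B_i\ast_{A_i}A_{i+1}$, $C=A_{i+1}$ is garbled.
\item The conclusion is $\langle B_iA_{i+1}\rangle=A_{i+1}\ast C_i\fleq B_{i+1}$, say $B_{i+1}=A_{i+1}\ast C_i\ast G_i$.
\item Redefine $C_{i+1}:=\langle C_iG_i\rangle=C_i\ast G_i$ and recurse.
\end{itemize}
Step one holds for every choice of complement, so $\langle B_iA_\omega\rangle=A_\omega\ast C_i$ for these $C_i$. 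Hence a basis of $A_\omega$, together with the increasing union of the compatible bases of the $C_i$, freely generates $B_\omega=\bigcup_i\langle B_iA_\omega\rangle$, because free generation is tested on finitely many generators at a time. This gives $A_\omega\fleq B_\omega$.

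The genuine gap is generality. $(\star_4)$ requires $A,B,C\in\freefg$, and $(\star_3)$ also needs a finitely generated base. So your remark about ``finitely generated pieces via $(\star_3)$'' gives nothing when some $A_i$ or $B_i$ is infinitely generated, and the lemma as stated allows that. As written, your argument proves the lemma only under the extra hypothesis that all $A_i$ and $B_i$ are finitely generated.

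That extra hypothesis holds in the one place the lemma is used, the proof of Theorem~\ref{main_th2}, where $A_{i_n},C_n\in\freefg$. There, clause (3) of the Claim even supplies $A_{i_{n+1}}\ast_{A_{i_n}}C_n\fleq C_{n+1}$ outright, which is precisely the coherence you extract from $(\star_4)$. For the statement in full generality you would have to add that hypothesis or find a different argument.
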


\begin{proof}
	By \ref{def:good_completion+}$(\star_2)$, $B_i \pureindep[A_i] \bigcup_{j < \omega} A_j$ implies that $B_i \astindep[A_i] \bigcup_{j < \omega} A_j$, which means that $\langle B_i \cup \bigcup_{j < \omega} A_j \rangle = B_i \ast_{A_i} \bigcup_{j < \omega} A_j \preccurlyeq \mathfrak{M}$.   Let $A=\bigcup_{j < \omega} A_j$, $B=\bigcup_{j < \omega} B_j$ and for every $i<\omega$ let $C_i,D_i\in \mathcal{K}$ be such that $B_{i+1}=B_i\ast C_i$ and $A_{i+1}=A_i\ast D_i$. Also, let $E_i\in \mathcal{K}$ be such that $B_i=A_i\ast E_i$. Then the following holds for all $i<\omega$:
	\[B_i \ast_{A_i} \bigcup_{j < \omega} A_j = E_i \ast A_i \ast \bigast_{i<j<\omega} D_i, \]
	whence
	\[B= (E_i \ast A_i \ast \bigast_{i<j<\omega} D_i)\ast  \bigast_{i<j<\omega} C_i.  \]
	Since $A_i \ast \bigast_{i<j<\omega} D_i=  \bigcup_{j < \omega} A_j$, this immediately entails that $ \bigcup_{j < \omega} A_j\fleq B$.
\end{proof}

\begin{thmn}[\ref{main_th2}]
	Let $\mathbf{V}$ and $(\mathcal{K}, \preccurlyeq, \pureindep)$  be as in Context~\ref{context_second_venue}, then  $(\mathcal{K}, \preccurlyeq, \pureindep)$ is not superstable (cf.~\ref{superstable_forking}).
\end{thmn}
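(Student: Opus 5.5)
We argue by contradiction: assume $(\mathcal{K}, \preccurlyeq, \pureindep)$ satisfies Superstable Local Character (viii) from \ref{superstable_forking}, and use it to show that the $\mathrm{CP}$-witness $A \leq B$ satisfies $A \fleq B$. This contradicts clause (b) of the Construction Principle (with $C$ trivial, or more precisely with $B$ itself).

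First we fix the data. Let $A, B \in \freeinf$ be countable witnesses of $\mathrm{CP}$, with $A$ freely generated by $\{a_i : i<\omega\}$, $A_i = \langle a_j : j\le i\rangle_B$ and $A_i \fleq B$. To meet the rank bound $m_*$ of $(\star_2)$ and the membership condition $(\star_1)$, we first pad everything with a fixed finite-rank free factor. Replace $A_i$ by $A_i \ast F$ and $B$ by $B \ast F$, where $F = F_{\mathbf{V}}(k)$ with $k \geq \max(n_*, m_*)$. Each padded $A_i$ then lies in $\freefg \cap \mathcal{K}$, has rank at least $m_*$, and the relations $A_i \fleq B$ are preserved. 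The chain $(A_i)_{i<\omega}$ is a $\fleq$-chain, and $\fleq$ implies $\preccurlyeq$ by condition (3) of \ref{def:AEC_covering}. By the AEC axioms, $A = \bigcup_i A_i \preccurlyeq B$. We then strongly embed $B$ into $\mathfrak{M}$, so that all $A_i$, $A$ and $B$ are strong submodels of $\mathfrak{M}$.

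Next we apply local character to the continuous chain $(A_i)_{i \le \omega}$, with $A_\omega = A$. For each finite tuple $\bar b$ from $B$ (in fact for each element $b\in B$), (viii) yields $\alpha<\omega$ with $b \pureindep[A_\alpha] A$. We then upgrade this pointwise independence to independence of an entire finitely generated free factor. By $(\star_3)$, applied with $A_\alpha \fleq B$ and the element $b$, there is $D_b \in \freefg$ with $b \in D_b$, $A_\alpha \fleq D_b \fleq B$ and $D_b \pureindep[A_\alpha] A$. By Monotonicity we may pass to larger bases $A_\beta$ with $\beta \geq \alpha$. We enumerate $B = \{b_n : n<\omega\}$ and build inductively an increasing $\fleq$-chain $(B_i)_{i<\omega}$ of finitely generated free factors of $B$, together with indices $\alpha_i$ increasing, such that $A_{\alpha_i} \fleq B_i$, $b_i \in B_i$ and $B_i \pureindep[A_{\alpha_i}] A$. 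For the inductive step, we combine $B_i$ and $D_{b_{i+1}}$. Their independence over a common $A_\beta$ gives, via $(\star_2)$ and $(\star_4)$, a free amalgam $B_i \ast_{A_\beta} D \fleq B$, and this amalgam is again finitely generated. Local character is then applied to a tuple generating it, so that we obtain independence of the whole amalgam from $A$ over some larger $A_{\alpha_{i+1}}$; $(\star_3)$ is used again to pass back to a free factor. Property (3) of \ref{def_like_free_groups}-type coherence is not needed here, since $\fleq$ between factors of $B$ is supplied directly by $(\star_4)$.

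Finally, we reindex to the subchain $(A_{\alpha_i})_{i<\omega}$, whose union is still $A$, and apply Lemma~\ref{lemma_for_main_th} to the chains $(A_{\alpha_i})$ and $(B_i)$. The lemma gives $A = \bigcup_i A_{\alpha_i} \fleq \bigcup_i B_i = B$, which contradicts $\mathrm{CP}$(b).

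The main obstacle is the inductive step in the middle paragraph. Local character only gives independence of a single finite tuple. We must turn this into independence of a growing free factor $B_i$ while keeping both $A_{\alpha_i} \fleq B_i$ and $B_i \fleq B_{i+1}$. The first thing to try is to take the generating tuple of $B_i$ together with $b_{i+1}$ as the tuple fed to (viii), and then use $(\star_3)$ with $c$ being that tuple, coding it as a single element if necessary. This produces $B_{i+1}$ containing $B_i$ as a free factor. The delicate point is checking that $B_i \fleq B_{i+1}$ holds, and not merely $B_i \leq B_{i+1}$. For this we would use $(\star_4)$ together with $B_i \fleq B$.
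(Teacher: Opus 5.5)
The gap is the step you yourself flag as delicate: neither version of your inductive step produces $B_{i+1}$ with $B_i\fleq B_{i+1}$.

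In the first version, $(\star_4)$ needs $B_i\pureindep[A_\beta]D_{b_{i+1}}$ together with $A_\beta\fleq B_i$. What you actually have is only that $B_i$ and $D_{b_{i+1}}$ are each independent from $A$ over $A_\beta$ (after monotonicity). That says nothing about their independence from each other. Moreover, for $\beta>\alpha_i$ the algebra $A_\beta$ need not even lie inside $B_i$.

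In the second version, local character along $(A_k)_{k<\omega}$ followed by $(\star_3)$ over the base $A_{\alpha_{i+1}}$ gives some $D\supseteq B_i\cup\{b_{i+1}\}$ with $A_{\alpha_{i+1}}\fleq D\fleq B$. But $(\star_3)$ relates $D$ only to its base, not to $B_i$. (It is also stated for single elements, and tuples cannot be coded as elements in a general variety.) Deducing $B_i\fleq D$ from $B_i\leq D$, $B_i\fleq B$ and $D\fleq B$ is precisely condition (3) of Definition~\ref{def_like_free_groups}. That condition is not among $(\star_1)$--$(\star_4)$ and is not automatic: it fails, e.g., for modules over a ring with a stably free non-free projective. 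Nor does $(\star_4)$ supply it, contrary to your remark. It only says that the amalgam of two factors that are independent over a common factor is again a factor.

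The missing idea is to change the chain along which local character is applied, so that the previously built factor already sits in the base.

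\begin{itemize}
\item Start from $C_m$ with $A_{i_m}\fleq C_m\fleq C$ and $C_m\pureindep[A_{i_m}]A$. Monotonicity and $(\star_2)$ make each $A_k\ast_{A_{i_m}}C_m$ a strong submodel of $\mathfrak{M}$. So $(A_k\ast_{A_{i_m}}C_m)_{k\geq i_m}$ is a $\preccurlyeq$-chain with union $A\ast_{A_{i_m}}C_m\preccurlyeq\mathfrak{M}$.
\item Apply local character along this chain to the single new element $c_n$. This gives $i_n$ with $c_n\pureindep[A_{i_n}\ast_{A_{i_m}}C_m]A\ast_{A_{i_m}}C_m$.
\item By monotonicity, $C_m\pureindep[A_{i_m}]A_{i_n}$. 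Then $(\star_4)$ gives $A_{i_n}\ast_{A_{i_m}}C_m\fleq C$.
\item Apply $(\star_3)$ over this finitely generated base. It yields $C_n\ni c_n$ with $A_{i_n}\ast_{A_{i_m}}C_m\fleq C_n\fleq C$.
\end{itemize}

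So $C_m\fleq C_n$ comes from the conclusion of $(\star_3)$ itself, with no coherence property needed. In fact you get the stronger invariant $A_{i_n}\ast_{A_{i_m}}C_m\fleq C_n$, which is what lets the complements of the $A_{i_n}$ stack up in the union. The independence $C_n\pureindep[A_{i_n}]A$ is then recovered via $(\star_2)$ from the identity $\langle C_nA\rangle=C_n\ast_{(A_{i_n}\ast_{A_{i_m}}C_m)}(A\ast_{A_{i_m}}C_m)=C_n\ast_{A_{i_n}}A$.

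Your padding by $F_{\mathbf{V}}(k)$ is harmless; the paper instead uses monotonicity to start at a large $i_0$. Your base step and your final appeal to Lemma~\ref{lemma_for_main_th} are as in the paper.
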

\begin{proof}
	Following Context~\ref{context_second_venue} we let $(\mathcal{K},\preccurlyeq, \pureindep)$ be an $\mrm{AEC}$-covering of $(\free, \fleq)$ admitting a weak independence calculus. Towards contradiction, we assume that $(\mathcal{K},\preccurlyeq, \pureindep)$ is superstable in the sense of \ref{superstable_forking}. Since $\mathbf{V}$ satisfies $\mrm{CP}$, we can find two countable $A,B\in \freeinf$ with $A\leq B$, $A$ freely generated by $\{a_i : i <\omega  \}$ such that $\langle a_j : j\leq i \rangle_B \fleq B $ and $A\not\fleq B\ast C$ for all $C\in \freeinf$. Let $X_C=\{c_i : i<\omega  \}$ be any collection of elements from $C$ so that $\langle AX_C\rangle_X=C$ and, for every $i<\omega$ we let $A_i= \langle a_j : j\leq i \rangle_B$. Notice that, by \ref{def:good_completion+}$(\star_1)$, we have that the chain $(A_i)_{i<\omega}$ eventually belongs to $\mathcal{K}$. 
	
	\medskip 
	\noindent \underline{Claim}. There is an infinite increasing sequence  $I\coloneqq\{i_n : n<\omega \}\subseteq\omega$ such that, for all $i\in I$, there is $C_i\in \freefg$ satisfying the following conditions:
	\begin{enumerate}[(1)]
		\item $c_i\in C_i\fleq C$;
		\item $ C_i\pureindep[A_{i}] A$;
		\item $A_{i_0}\fleq C_0$ and, for all $n<\omega$, $A_{i_{n+1}}\ast_{A_{i_n}} C_n \fleq C_{n+1}$.
	\end{enumerate}
	
	\smallskip
	\noindent\emph{Proof of the claim.} We prove the claim by induction on $n<\omega$.
	
	\smallskip
	\noindent \underline{Case $n=0$}. 
	\newline We start by considering the element $c_0$. Since $(\mathcal{K}, \preccurlyeq,\pureindep)$ is a superstable forking calculus in the sense of \ref{superstable_forking}, it follows that there is some $i_0<\omega$ such that $A_{i_0}\in \mathcal{K}$ and $c_0\pureindep[A_{i_0}] A$. Notice that, using the monotonicity of $(\mathcal{K}, \preccurlyeq,\pureindep)$ (cf.~\ref{def:good_frame}), we can assume without loss of generality that $A_{i_0}$ has rank $\geq n_*,m_*$, where these two values are as in \ref{def:good_completion+}. Since $A_{i_0}$ is finitely generated, we have by \ref{def:good_completion+}$(\star_3)$ that there is some $C_{0}\in \freefg\cap \mathcal{K}$ such that $c_0\in C_{0}$, $A_{i_0}\fleq C_0\fleq C$, and $C_0\pureindep[A_{i_0}] A$, verifying our claim.
	
	\smallskip
	\noindent \underline{Case $n=m+1$}.
	\newline From the induction hypothesis, there is $C_{m}\in \freefg\cap \mathcal{K}$ with  $c_{m}\in C_{m}$, $A_{i_m}\fleq C_{m}\fleq  C$ and $C_{m}\pureindep[A_{i_m}] A$. From the fact that $A_{i_m}\fleq A,C_{m}$ and $A_{i_k}\fleq A_\ell$ whenever $i_k<\ell$, together with $C_{m}\pureindep[A_{i_m}] A$ and \ref{def:good_completion+}$(\star_2)$, it follows that the free amalgams $A_{k}\ast_{A_{i_m}} C_{m}$ and $ A\ast_{A_{i_m}} C_{m}$ are well-defined strong substructure of the monster model for all $i_m\leq k<\omega$. Since the family of structures $(A_{k}\ast_{A_{i_m}} C_{m})_{k<\omega}$ forms a strong $\preccurlyeq$-chain in $(\mathcal{K}, \preccurlyeq)$, it follows again  from \ref{superstable_forking} that there is some $i_m<i_n<\omega$ such that
	\[c_n\pureindep[A_{i_n}\ast_{A_{i_m}} C_m] A\ast_{A_{i_m}} C_m.\]
	Since $A_{i_n}\fleq C$, $C_m\fleq C$, and (by monotonicity) $C_m\ind_{A_{i_m}}A_{i_n}$, it follows from \ref{def:good_completion+}$(\star_4)$ that  $A_{i_n}\ast_{A_{i_m}} C_m\fleq C$. Because $A_{i_n}\ast_{A_{i_m}} C_m$ is clearly finitely generated, we can apply again \ref{def:good_completion+}$(\star_3)$ and obtain some $C_n\in \freefg$ with $c_{n}\in C_n$ such that $A_{i_n}\ast_{A_{i_m}} C_m \fleq C_n\fleq C$ and, moreover:
	\begin{align*}
		C_n\ind_{A_{i_n}\ast_{A_{i_m}} C_m} A\ast_{A_{i_m}} C_{m}. 
	\end{align*}
	By the definition of $C_n$, it is immediately clear that conditions (1) and (3) from the claim are satisfied, whence it remains to verify (2). Therefore, from the last display above and  \ref{def:good_completion+}$(\star_2)$, we can immediately derive that 
	\[C_n\ind^\ast_{A_{i_n}\ast_{A_{i_m}} C_m}A\ast_{A_{i_m}} C_{m}   \]
	which means that 
	\[ C_n \ast_{(A_{i_n}\ast_{A_{i_m}} C_m)} (A\ast_{A_{i_m}} C_{m})  = \langle C_n \cup  (A\ast_{A_{i_m}} C_{m}) \rangle_{\mathfrak{M}_{\mathfrak{K}}}\preccurlyeq \mathfrak{M}_{\mathfrak{K}}.  \]
	In particular, we have that $C_n\cap (A\ast_{A_{i_m}} C_{m})=A_{i_{n}}\ast_{A_{i_m}} C_{m}$. Because $C_{m}\pureindep[A_{i_m}] A$, it follows again by \ref{def:good_completion+}$(\star_2)$  that $C_n\cap A= A_{i_n}$. Then, since we also have  $A_{i_n}\fleq C_n$, it follows that the structure $C_n \ast_{A_{i_n}} A$ is well-defined, and clearly:
	\[ C_n \ast_{A_{i_n}} A = \langle C_n A \rangle_{\mathfrak{M}_{\mathfrak{K}}}\preccurlyeq \mathfrak{M}_{\mathfrak{K}}.  \]
	Therefore, by applying again \ref{def:good_completion+}$(\star_2)$, we conclude that $C_n \pureindep[A_{i_n}] A$, completing the proof of the induction step, and of the claim. \hfill $\dashv$
	
	\medskip
	\noindent Finally, consider the family of structures $(C_i)_{i<\omega}$ defined in the claim. Since $C_i\in \freeinf$ for all $i<\omega$, it follows from item (3) that $(C_i)_{i<\omega}$  is a strong $\fleq$-chain in $\freeinf$. Therefore, by applying Lemma~\ref{lemma_for_main_th}, we obtain that $A=\bigcup_{n<\omega}A_{i_n}\fleq \bigcup_{n<\omega}C_n$. However, it follows from item (1) in the claim that $\bigcup_{i < \omega}C_i=C$, which yields $A\fleq C$ and contradicts the fact that $A,C$ witness $\mrm{CP}$. This concludes the proof.
\end{proof}

\subsection{Applications}

We conclude the paper by showing that, using some results from the literature, Theorem~\ref{main_th2} yields yet another proof of the non-superstability of the first-order theory of non-abelian free groups, and, more generally, of the first-order theory of free objects of infinite rank with similar model-theoretic properties. We recall that, if $T$ is a stable first-order theory, then there is a canonical independence relation $\pureindep$ satisfying the properties from Section~\ref{sec_nonforking}, which is exactly the non-forking independence relation (cf.~\cite[Thm.~8.5.5]{tent_book}). We start by providing a characterisation of the non-forking relation in $(\mrm{Mod}(T),\preccurlyeq_{\omega, \omega})$ between free algebras $A\fleq B,C$ in $\mrm{Mod}(T)$, and under the assumption that $T$ is stable, and superstable, respectively. This in particular will allow us to verify Condition~\ref{def:good_completion+}$(\star_2)$

\begin{remark}\label{remark:kappa(T)}
	If $T$ is a stable theory in a language $L$, then there always exists a cardinal $\kappa(T)\leq |L|^+$ such that, for all $a\in \mathfrak{M}$ and $B\subseteq \mathfrak{M}$, there is a subset $A\subseteq B$ of size $<\kappa(T)$ such that $a\pureindep[A]B$. This property is called the \emph{locality property} of forking independence. In particular, if $L$ is countable, then we always have that $\kappa(T)\leq \aleph_1$, while $\kappa(T)=\aleph_0$ is equivalent to the statement that $T$ is superstable. 
\end{remark}

\begin{proposition}\label{prop:star_1}
	Let $\mathbf{V}$ be a countable variety of algebras, let $T$ be the first-order theory of the $\mathbf{V}$-free algebras of infinite rank and let $\mathfrak{K}=(\mrm{Mod}(T),\preccurlyeq_{\omega,\omega})$. Let $A,B,C\in \mathcal{F}_{\mathbf{V}}$ with $A,B,C\preccurlyeq_{\omega,\omega} \mathfrak{M}_{\mathfrak{K}}$, $A\fleq B$ and $A\fleq C$, then
	\begin{enumerate}[(1)]
		\item if $T$ is stable and $A\in \freeinf$ then $B \pureindep[A] C$ if and only if $B \astindep[A] C$.
		\item if $T$ is superstable and there is some $n_*<\omega$ such that the $\mathbf{V}$-free algebras of rank $\geq n_*$ are models of $T$,  then there is $n_*\leq m_*<\omega$ such that $B \pureindep[A] C$ if and only if $B \astindep[A] C$. 
	\end{enumerate}
\end{proposition}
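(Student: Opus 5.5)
The proof splits according to the two items, and for both directions the plan is to compare forking with the free amalgam using the uniqueness of nonforking extensions.

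For (1), assume $T$ is stable and $A\in\freeinf$. Since $A\fleq B$ and $A\fleq C$, write $B=A\ast E$ and $C=A\ast E'$. I would first construct a realisation of the free amalgam in which the two sides are forking-independent. Take $B'\equiv_A B$ with $B'\pureindep[A] C$, using existence in the stable theory $T$; here $A$ is an elementary submodel because $A\in\freeinf\preccurlyeq_{\omega,\omega}\mathfrak{M}$. In parallel, the abstract free amalgam $D=B\ast_A C=A\ast E\ast E'$ lies in $\freeinf$ and contains $B$ and $C$ as free factors, so by \ref{def_AEC_covering}(3) we get $B,C\preccurlyeq_{\omega,\omega} D$. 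By universality and homogeneity of $\mathfrak{M}$, $D$ embeds elementarily over $C$; call the image of $B$ under this embedding $B''$. Then $\langle B''C\rangle=B''\ast_AC\preccurlyeq\mathfrak{M}$, that is, $B''\astindep[A]C$.

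The key step is to show $B''\pureindep[A]C$. For this I would use that $\mathrm{tp}(B''/C)$ is determined by the free-product structure, and hence is $\mathrm{Aut}(\mathfrak{M}/A)$-invariant in the following sense. Any automorphism of $C$ fixing $A$ extends to an automorphism of $D$ fixing $B$. It therefore preserves $\mathrm{tp}(B/C)$, computed in $D\preccurlyeq\mathfrak{M}$.

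From this I would argue via definability of types. The type $\mathrm{tp}(B''/C)$ is fixed by every automorphism of $C$ over $A$, and $C=A\ast E'$ has many such automorphisms (for instance, permutations of the generators of $E'$ when $E'$ has infinite rank; one may enlarge $C$ by free factors to arrange this). The canonical base then lies in $\mathrm{dcl}^{eq}(A)$ up to the needed homogeneity, which gives nonforking over the model $A$.

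The converse direction uses uniqueness of nonforking extensions. If $B\pureindep[A]C$, then $\mathrm{tp}(B/C)=\mathrm{tp}(B''/C)$, because both are nonforking extensions of the same type over the model $A$. Hence $B\ast_AC$ is realised as $\langle BC\rangle$ and is an elementary substructure, which is exactly $B\astindep[A]C$. Conversely, if $B\astindep[A]C$, then $\langle BC\rangle\cong D$ over $C$ elementarily, so $\mathrm{tp}(B/C)=\mathrm{tp}(B''/C)$ is nonforking.

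For (2), the difficulty is that $A$ may have finite rank, so it need not be a model. By superstability, local character holds with finite sets (Remark~\ref{remark:kappa(T)}). I would choose $m_*\geq n_*$ such that, for free $A$ of rank $\geq m_*$, $A$ is a model of $T$ and also elementary in $\mathfrak{M}$ by the hypothesis. The argument for (1) then goes through verbatim with the model $A$ in place of $A\in\freeinf$: the only role of infinite rank was to make $A$ a model, and to supply enough automorphisms over $A$ of a free factor complement, which can be achieved by passing to $C\ast F$ for a large free $F$ and then using monotonicity.

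The main obstacle is the invariance step in (1), namely deducing $B''\pureindep[A]C$ from the symmetry of the free amalgam. If the canonical-base argument is too weak, I would try a different route. I would build a Morley sequence $(B_k)_k$ in $\mathrm{tp}(B/A)$ and show inductively that $\langle B_0\cdots B_k\rangle=\bigast_A B_k$ is an elementary substructure. This would identify the free amalgam with the nonforking amalgam through indiscernibility.
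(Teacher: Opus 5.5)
The gap is the key step $B''\pureindep[A] C$ in (1). Your construction of $B''$ (embedding $D=B\ast_A C$ into $\mathfrak{M}$ over $C$) is fine and agrees with the paper. So is your reduction of both directions of (1) to that single claim via stationarity over the model $A$.

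\textbf{Why your argument for the key step fails.} You only use automorphisms of $C$ fixing $A$ pointwise. Invariance of $\mathrm{tp}(B''/C)$ under $\mathrm{Aut}(C/A)$ does not put its canonical base into $\mathrm{dcl}^{\mathrm{eq}}(A)$ unless $C$ is sufficiently saturated over $A$, and that is not available for free algebras in general. A counterexample to the inference pattern: take the theory of an equivalence relation with infinitely many infinite classes, let $C\succ A$ add exactly one new class $K$, and let $b\notin C$ lie in $K$. Then $\mathrm{tp}(b/C)$ is $\mathrm{Aut}(C/A)$-invariant and $C$ has many automorphisms over $A$, yet $\mathrm{tp}(b/C)$ forks over $A$.

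In your setting, permuting generators of $E'$ only shows that each $e\in\mathrm{Cb}(\mathrm{tp}(B''/C))$ lies in $\mathrm{dcl}^{\mathrm{eq}}(A\bar e_k)$ for an $A$-indiscernible sequence $(\bar e_k)_{k<\omega}$ of disjoint tuples of generators of $E'$. That gives $e\in\mathrm{acl}^{\mathrm{eq}}(A)$ when the $\bar e_k$ are independent over $A$, but not in general; in the example above, the class $K$ is exactly such an $e$. And independence of the $\bar e_k$ over $A$ is itself an instance of the implication you are proving. The Morley-sequence fallback has the same problem: its inductive step is precisely that a nonforking copy generates a free amalgam, and you give no argument for it.

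\textbf{The missing idea is a larger symmetry group.} Write $B=A\ast\langle X_B\rangle$, let $X_{B''}$ be the image of $X_B$, and fix a basis $X_C\supseteq X_A$ of $C$. Then $B''\ast_A C=\langle X_{B''}\rangle\ast C$ is free on $X_{B''}\cup X_C$ and is elementary in $\mathfrak{M}$. Hence the identity on $X_{B''}$ together with any permutation of $X_C$ extends to an automorphism of $\mathfrak{M}$. This includes permutations moving elements of $X_A$, which do not fix $B''$. Such an automorphism stabilises $C$ and fixes $X_{B''}$, so it fixes pointwise the canonical base of $\mathrm{tp}(\bar x/C)$ for every finite $\bar x\subseteq X_{B''}$. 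Each element of that canonical base lies in $\mathrm{dcl}^{\mathrm{eq}}(S)$ for some finite $S\subseteq X_C$. Since $X_A$ is infinite, some permutation sends $S$ into $X_A$. So the canonical base lies in $\mathrm{dcl}^{\mathrm{eq}}(A)$, and $B''\pureindep[A] C$. The paper runs the same idea through local character: it takes a countable base of nonforking inside $C$ and uses these permutations to move it into $A$.

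\textbf{Consequences for (2).} The rank of $A$ is not there to make $A$ a model; that is already the standing hypothesis $A\preccurlyeq_{\omega,\omega}\mathfrak{M}$. It is there to give room to absorb the base of nonforking. Your plan for (2) therefore does not work as written: it never uses superstability, and with automorphisms fixing $A$ the rank of $A$ plays no role at all. In the paper, superstability makes the base finite, so rank at least $m_*$ suffices to absorb it for a single new generator. An induction on $|X_B|$ using transitivity then handles the general case.
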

\begin{proof}
	We first prove (1). Let $X_A=\{a_i : i<\kappa \}$ be a basis of $A$, where $\aleph_0\leq \kappa$, and let $X_C=\{a_i : i<\lambda \}$ be a basis of $C$, where we assume $\kappa\leq \lambda$. We can assume without loss of generality that $B=A\ast \langle X_B\rangle $ for  $X_B=\{ b_i : i\leq n   \}$ for some finite $n<\omega$. We show that  $B \pureindep[A] C$ if and only if $B \astindep[A] C$. 
	
	\smallskip\noindent Suppose first that $B \astindep[A] C$, then $B\ast_A C=\langle X_BX_C\rangle \preccurlyeq_{\omega,\omega} \mathfrak{M}$. Notice that, by the local character of forking independence (and the fact $\mathbf{V}$ is countable), we can find by \ref{remark:kappa(T)} a countable subset $I\subseteq \kappa$ such that 
	\begin{align*}
		X_B\pureindep[\langle a_i : i\in I\rangle] X_C.
	\end{align*}
	Since $B\ast_A C=\langle X_BX_C\rangle \preccurlyeq_{\omega,\omega} \mathfrak{M}$, every permutation $\pi$ of $\lambda$ induces an automorphism $f_\pi$ such that $f_\pi(a_i)=a_{\pi(i)}$ for all $i\in I$ and, moreover $f\restriction X_B$ is the identity. It follows that $X_B\pureindep[\langle f_\pi(a_i) : i\in I\rangle ] X_C$ for all permutations $\pi$ of $\lambda$, which clearly yields $X_B\pureindep[Z] X_C$ for all countable $Z\subseteq A$. By the finite character of forking independence, it follows that $B \pureindep[A] C$. 
	
	\smallskip\noindent The converse direction easily follows from stationarity.  Suppose we have that $B \pureindep[A] C$ and consider the structure $B\ast_A C$. Notice that, since $A,B,C\in \mathcal{F}_{\mathbf{V}}$ we clearly have that $B\ast_A C\in\mathcal{F}_{\mathbf{V}}$. Since $A\preccurlyeq_{\omega,\omega}C\preccurlyeq_{\omega,\omega}B\ast_A C$ and $A,C \preccurlyeq_{\omega,\omega}\mathfrak{M}$, it follows by the universality and homogeneity property of the monster model $\mathfrak{M}$ that we can find a subset $B'\subseteq \mathfrak{M}$ such that $B'\ast_A C\preccurlyeq_{\omega,\omega}\mathfrak{M}$ and $\type(B'/A)=\type(B/A)$. By the direction already proved we obtain that $B' \pureindep[A] C$. Since forking independence is stationary over models, it follows that $\type(B'/C)=\type(B/C)$. Since $B' \astindep[A] C$ we thus obtain that $B \astindep[A] C$.
	
	\medskip
	\noindent We consider (2). The direction from left to right follows again from stationarity. For the direction from right to left we reason as follows. Let $X_A$ be the basis of $A$, $X_C\supseteq X_A$ be a basis of $C$, and $X_B$ be such that $X_AX_B$ is a basis of $B$. We prove the claim by induction on the size of $X_B$.
	
	\smallskip
	\noindent If $X_B=\{b_0\}$ is a singleton, then since $T$ is superstable, we can reason exactly as in the proof of (1) using the fact that $\kappa(T)=\aleph_0$ (recall \ref{remark:kappa(T)}). Then the set of indices $I$ from the previous proof can be taken to be finite, so that letting $m_*\geq |I|$ the claim holds. Now, let $X_B=\{b_0,\dots, b_n,b_{n+1}\}$ and suppose $B\astindep[A]C$, i.e., $B\ast_A C =\langle ACX_B \rangle\preccurlyeq_{\omega, \omega} \mathfrak{M}$. It also follows that $A\ast \langle b_0,\dots, b_n\rangle \astindep[A]C$ and $A\ast \langle X_B\rangle \astindep[A\ast \langle b_0,\dots, b_n\rangle]C\ast \langle b_0,\dots, b_n\rangle$. By the induction hypothesis it follows that $b_0,\dots, b_n\pureindep[A]C$. Similarly, applying the induction hypothesis to the extension $A\ast \langle b_0,\dots, b_n\rangle\fleq A\ast \langle X_B\rangle$, we also obtain that $A\ast \langle X_B\rangle \pureindep[A\ast \langle b_0,\dots, b_n\rangle]C\ast \langle b_0,\dots, b_n\rangle$. By monotonicity we thus obtain that $b_0,\dots, b_n\pureindep[A]C$ and $A\ast \langle X_B\rangle \pureindep[A\ast \langle b_0,\dots, b_n\rangle]C$, so it follows by transitivity that $A\ast \langle X_B\rangle \pureindep[A]C$, completing our proof.
\end{proof}

Motivated by free groups, we introduce the following definition of {\em behaving like the theory of free groups}, and we prove that it entails the conditions from \ref{def:good_completion+}.

\medskip

\begin{defn}[\ref{def_like_free_groups}] Let $\mathbf{V}$ be a variety and let $T$ be the first-order theory of $\freeinf$. We say that $T$ {\em behaves like the theory of free groups} if we have the following:
	\begin{enumerate}[(1)]
	\item there is $n_* < \omega$ such that for every $n_* \leq m < \omega$ we have $F_{\mathbf{V}}(m) \in \mrm{Mod}(T)$;
	\item for all $A,B\in \freefg\cap\mrm{Mod}(T)$ such that $A\fleq B$ and $b\in B$, the smallest free factor $D\in \mrm{Mod}(T)$ of $B$ containing $Ab$ belongs to $\mrm{acl}(Ab)$;
	\item $A, B \in \free^{< \infty}$, $C\in \free$ and $A \leq B \leq C$, $A \fleq C$, $B \fleq C$, then $A \fleq B$;
	\item for all $A,B\in \free\cap \mrm{Mod}(T)$, $A\fleq B$ entails $A\preccurlyeq_{\omega, \omega} B$ and, if moreover $A,B\in \freefg$, then $A\preccurlyeq_{\omega, \omega} B$ entails $A\fleq B$.
\end{enumerate}
\end{defn}

\begin{lemma}\label{lem:free}
	Let $\mathbf{V}$ be a countable variety of algebras and let $T$ be the first-order theory of $\freeinf$. If $T$ behaves like the theory of free groups (cf.~\ref{def_like_free_groups}) then $(\mrm{Mod}(T), \preccurlyeq_{\omega, \omega})$ satisfies the Condition \ref{def:good_completion+}$(\star_1)$, \ref{def:good_completion+}$(\star_3)$ and \ref{def:good_completion+}$(\star_4)$.
\end{lemma}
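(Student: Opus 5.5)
The plan is to verify the three conditions one at a time, taking $\pureindep$ to be first-order non-forking in $\mathfrak{M}$. Stability is used implicitly here: without it the weak independence calculus is not available at all. Each condition will then be read off the corresponding clause of Definition~\ref{def_like_free_groups}.

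\emph{Condition $(\star_1)$.} This is literally clause (1) of Definition~\ref{def_like_free_groups}, since $\mathcal{K}=\mrm{Mod}(T)$. The same $n_*$ works.

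\emph{Condition $(\star_3)$.} Let $A\in\freefg\cap\mrm{Mod}(T)$ and $B,C\in\free\cap\mrm{Mod}(T)$ with $A\fleq C$, and let $c\in C$ satisfy $c\pureindep[A]B$. Since $c$ lies in the subalgebra generated by $A$ and finitely many basis elements of a complement, there is a finitely generated $C'\in\freefg$ with $A\fleq C'\fleq C$ and $c\in C'$. Enlarging $C'$ if necessary, we may assume its rank is at least $n_*$, so $C'\in\mrm{Mod}(T)$. Now apply clause (2) to $A\fleq C'$ and $c$, and let $D$ be the smallest free factor of $C'$ in $\mrm{Mod}(T)$ containing $Ac$. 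Then $A\fleq D$ by clause (3), applied with $A\le D\le C'$, $A\fleq C'$ and $D\fleq C'$. Moreover $D\fleq C'\fleq C$ gives $D\fleq C$. Since $D\subseteq\mrm{acl}(Ac)$, the standard fact that $c\pureindep[A]B$ implies $\mrm{acl}(Ac)\pureindep[A]B$ yields $D\pureindep[A]B$.

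\emph{Condition $(\star_4)$.} Let $A,B,C\in\freefg\cap\mrm{Mod}(T)$ with $A\fleq B\fleq D$, $A\fleq C\fleq D$ and $B\ind_A C$. First reduce to a finitely generated ambient algebra: choose $D'\in\freefg\cap\mrm{Mod}(T)$ with $B,C\fleq D'\fleq D$, which exists because $B,C$ are finitely generated free factors of $D$. By clause (4), both $B$ and $C$ are elementary substructures of $D'$ and hence of $\mathfrak{M}$. Since $A\fleq B$ and $A\fleq C$, we can form $B\ast_A C$, which is free of finite rank $\ge n_*$. By the argument of Proposition~\ref{prop:star_1}, using existence and stationarity over the model $A$, there is $B'$ with $\type(B'/A)=\type(B/A)$ such that $\langle B'C\rangle=B'\ast_AC\preccurlyeq_{\omega,\omega}\mathfrak{M}$ and $B'\pureindep[A]C$. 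Uniqueness then gives $\type(B/C)=\type(B'/C)$, so $\langle BC\rangle\cong B\ast_AC$ and $\langle BC\rangle\preccurlyeq_{\omega,\omega}\mathfrak{M}$. Coherence gives $\langle BC\rangle\preccurlyeq D'$, and the second half of clause (4), applied to these finitely generated free algebras, gives $B\ast_AC\fleq D'$. Transitivity of $\fleq$ then gives $B\ast_AC\fleq D$.

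The main obstacle is $(\star_4)$. Its key step is showing that $\langle BC\rangle$ is isomorphic to the free amalgam and is an elementary substructure of $\mathfrak{M}$. This depends on transferring the realisation $B'$ back to $B$ via stationarity, and it must be done without any rank hypothesis of the kind $(\star_2)$ supplies. If that route fails, the fallback is to use the direction of Proposition~\ref{prop:star_1}(1) that is proved via stationarity, which holds for any model $A$, to obtain $B\astindep[A]C$ directly from $B\ind_AC$.
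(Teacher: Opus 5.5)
Your treatment of $(\star_1)$ and $(\star_3)$ is fine. For $(\star_3)$ you are actually more careful than the paper: you first pass to a finitely generated free factor $C'$ of $C$, so that clause (2) of Definition~\ref{def_like_free_groups} genuinely applies and the resulting $D$ lies in $\freefg$.

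The gap is in $(\star_4)$, at the assertion that the realisation $B'$ with $B'\ast_A C\preccurlyeq_{\omega,\omega}\mathfrak{M}$ satisfies $B'\pureindep[A]C$. Existence and stationarity over the model $A$ only tell you that $\type(B/A)$ has a unique non-forking extension to $C$. They do not tell you that the free-amalgam extension is that one. That identification is exactly the implication $B'\astindep[A]C\Rightarrow B'\pureindep[A]C$. Proposition~\ref{prop:star_1} establishes it only in two cases:
\begin{enumerate}
\item when $A$ has infinite rank (part (1), via local character and permutations of a basis of $A$);
\item when $T$ is superstable and $A$ has rank at least $m_*$ (part (2)).
\end{enumerate}
In $(\star_4)$ the base $A$ is finitely generated and no superstability is assumed. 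None of clauses (1)--(4) of Definition~\ref{def_like_free_groups} says anything about how non-forking over a finite-rank free factor relates to free amalgams. Your fallback is circular. The ``stationarity'' direction of Proposition~\ref{prop:star_1}(1) is proved by producing precisely such a $B'$ and then appealing to ``the direction already proved'' to get $B'\pureindep[A]C$. So it does not hold for arbitrary models $A$ either.

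The paper does not try to avoid this step. It obtains $B\astindep[A]C$ from $B\ind_A C$ by invoking $(\star_2)$ directly. The statement of the lemma leaves this dependence implicit, but $(\star_2)$ is available wherever the lemma is used. In the corollary on free groups, superstability is assumed towards a contradiction and $(\star_2)$ comes from Proposition~\ref{prop:star_1}(2). In the proof of Theorem~\ref{main_th2}, $(\star_4)$ is only applied over some $A_{i_m}$ of rank at least $m_*$.

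Once $\langle BC\rangle = B\ast_A C\preccurlyeq_{\omega,\omega}\mathfrak{M}$ is in hand, the rest of your argument coincides with the paper's. You pass to a finitely generated $D'\fleq D$ containing $B\ast_AC$ (the paper uses $D_n=\langle d_i : i\leq n\rangle_D$), then apply Coherence, the second half of clause (4), and transitivity of $\fleq$. So the repair is to make $(\star_4)$ explicitly conditional on $(\star_2)$, with $A$ of rank at least $m_*$, rather than deriving it from stability and clauses (1)--(4) alone.
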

\begin{proof}
	Condition \ref{def:good_completion+}$(\star_1)$ is exactly \ref{def_like_free_groups}(1), thus we consider Condition \ref{def:good_completion+}$(\star_3)$ and Condition \ref{def:good_completion+}$(\star_4)$.
	
	\smallskip
	\noindent We first verify Condition \ref{def:good_completion+}$(\star_3)$. Let $A,B$ and $C$ be as in the statement of the condition, and let $c\in C$. Since $A\fleq C$, it follows from \ref{def_like_free_groups}(2) that the smallest free factor $D\fleq C$ containing $Ac$  is contained in the algebraic closure $\mrm{acl}(Ac)$, whence by the properties of non-forking it follows that $D\pureindep[A]B$. Finally, since $A\leq D$ and $D\fleq C$, it follows from \ref{def_like_free_groups}(3) that $A\fleq D$ as well.
	
	\smallskip
	\noindent  We verify Condition \ref{def:good_completion+}$(\star_4)$. Let $A,B,C$ and $D$ be as in the statement of the condition. Since $B\pureindep[A]C$, it follows by Condition~\ref{def:good_completion+}$(\star_2)$ that $B\astindep[A]C$ and therefore $B\ast_A C\preccurlyeq_{\omega, \omega} \mathfrak{M}$, whence by Coherence also $B\ast_A C\preccurlyeq_{\omega, \omega} D$. Let $D=\langle X_D\rangle$ for some infinite basis $X_D=\{d_i : i<\omega\}$ and define $D_n=\langle d_i : i\leq n \rangle_D$. Clearly, since $B\ast_A C$ is finitely generated, we have that $B\ast_A C\leq D_n$ for some $n<\omega$. Notice that by construction we have $D_n\fleq D$, which entails by \ref{def_like_free_groups}(4) that $D_n\preccurlyeq_{\omega, \omega} D$. It follows from the Coherence of $\preccurlyeq_{\omega, \omega}$ that $B\ast_A C\preccurlyeq_{\omega, \omega} D_n$. Finally, since $B\ast_A C$ and $D_n$ are finitely generated, we conclude from \ref{def_like_free_groups}(4) that $B\ast_A C \fleq D_n$.	
\end{proof}

As explained in the introduction, the following fact is well-known (and follows also from \ref{main_th1}), but it makes for an interesting non-trivial application of \ref{main_th2}. 

\begin{corollary}
	The theory $T_{\mrm{fg}}$ of non-abelian free groups is not superstable.
\end{corollary}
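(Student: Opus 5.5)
The plan is to apply Theorem~\ref{main_th2} to $\mathbf{V}$ the variety of groups and $(\mathcal{K},\preccurlyeq)=(\mrm{Mod}(T_{\mrm{fg}}),\preccurlyeq_{\omega,\omega})$, equipped with first-order non-forking independence. We argue by contradiction: assume $T_{\mrm{fg}}$ is superstable. Then $T_{\mrm{fg}}$ is in particular stable, so non-forking gives a weak independence calculus in the sense of \ref{def:good_frame} (cf.~\cite[Thm.~8.5.5]{tent_book}). Moreover, superstability of $T_{\mrm{fg}}$ means exactly that this calculus satisfies the superstable local character of \ref{superstable_forking}, since $\kappa(T)=\aleph_0$ by \ref{remark:kappa(T)}.

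Next we check the hypotheses of Context~\ref{context_second_venue}.
\begin{enumerate}[(1)]
\item The variety of groups is countable and satisfies CP. This is by Mekler \cite{mekler2}, via the witness $a_i=b_ib_{i+1}^{-2}$ used in the proof of Corollary~\ref{cor}.
\item $(\mrm{Mod}(T_{\mrm{fg}}),\preccurlyeq_{\omega,\omega})$ is an $\mrm{AEC}$-covering of $(\free,\fleq)$. For Definition~\ref{def:AEC_covering}(3) we need that $A\fleq B$ implies $A\preccurlyeq_{\omega,\omega}B$ for free groups that are models of $T_{\mrm{fg}}$. This is the Kharlampovich--Myasnikov/Sela result that free factor embeddings of non-abelian free groups are elementary.
\end{enumerate}

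The remaining task is to verify niceness with respect to free factors. We do this by checking that $T_{\mrm{fg}}$ behaves like the theory of free groups in the sense of \ref{def_like_free_groups}.
\begin{enumerate}[(1)]
\item Condition (1) holds with $n_*=2$, since all $F_m$ with $m\geq 2$ are elementarily equivalent \cite{KM06,zsela}.
\item Condition (3) is the standard fact that if $A\leq B$ are finitely generated, $A$ is a free factor of $C$ and $B\leq C$, then $A$ is a free factor of $B$. This follows from the Kurosh subgroup theorem, or directly by projecting onto $B$.
\item Condition (4): its first half is again elementarity of free factors. Its converse, that an elementary embedding between finitely generated free groups is a free factor, is Perin's theorem (elementary subgroups of free groups are free factors).
\item Condition (2) asks that the smallest free factor of $B$ containing $Ab$ lie in $\mrm{acl}(Ab)$. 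I expect this to be the main obstacle. I would try first to derive it from the existence of a unique minimal free factor containing a given finitely generated subgroup relative to $A$, together with Perin--Sklinos / Ould Houcine results on homogeneity and algebraic closure in free groups. These show that this factor has finitely many images under automorphisms fixing $Ab$, hence is algebraic.
\end{enumerate}

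With (1)--(4) in hand, Lemma~\ref{lem:free} gives $(\star_1)$, $(\star_3)$ and $(\star_4)$ of \ref{def:good_completion+}. Condition $(\star_2)$ follows from Proposition~\ref{prop:star_1}(2), which applies because we are assuming superstability and (1) holds. Note that the proof of Lemma~\ref{lem:free} for $(\star_4)$ invokes $(\star_2)$, so $(\star_2)$ must be established first.

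All hypotheses of Theorem~\ref{main_th2} are then satisfied, so the calculus is not superstable. This contradicts the assumption that $T_{\mrm{fg}}$ is superstable, which finishes the argument.
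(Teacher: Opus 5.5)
Your architecture is the same as the paper's. You assume superstability, obtain $(\star_2)$ from Proposition~\ref{prop:star_1}(2), obtain $(\star_1)$, $(\star_3)$, $(\star_4)$ from Lemma~\ref{lem:free} by checking Definition~\ref{def_like_free_groups}, and contradict Theorem~\ref{main_th2}. Your handling of conditions (1), (3), (4) matches the paper's citations: Kharlampovich--Myasnikov/Sela, Kurosh in place of \cite[Lem.~2.4]{Miasnikov}, and Sela plus Perin.

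You are right that condition~\ref{def_like_free_groups}(2) is the crux, and that is where the genuine gap is. In the proof of $(\star_3)$ in Lemma~\ref{lem:free} it is used elementwise: $D\subseteq\mrm{acl}(Ac)$ is exactly what transfers $c\pureindep[A]B$ to $D\pureindep[A]B$. Your sketch gives at most that the \emph{set} $D$ is invariant under automorphisms fixing $Ab$. That is immediate from uniqueness of the minimal free factor and needs no homogeneity results. It says nothing about individual elements of $D$ having finitely many conjugates over $Ab$.

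In fact the elementwise statement is false for free groups, so no argument can close this step. Let $A=F(a_1,a_2)$, $B=A\ast F(x,y)$ and $b=xyx^{-1}y^{-1}$.
\begin{itemize}
\item Suppose $D$ were a proper free factor of $B$ containing $A\cup\{b\}$. By Kurosh, $D=A\ast D'$ and $B=A\ast D'\ast E$ with $E\neq 1$.
\item Killing $A$ identifies $F(x,y)$ with $D'\ast E$. So $b$ would lie in a rank-one free factor $\langle p\rangle$ of $F(x,y)$, i.e.\ $b=p^k$ with $p$ primitive and $k\neq 0$. This is impossible in the abelianisation.
\item Hence the minimal free factor of $B$ containing $Ab$ is $B$ itself.
\item The automorphism of $B$ fixing $A$ and $y$ with $x\mapsto xy$ fixes $b$. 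Its powers send $x$ to the pairwise distinct elements $xy^n$.
\item Since $B\models T_{\mrm{fg}}$ sits elementarily in $\mathfrak{M}$, this gives $x\notin\mrm{acl}(Ab)$.
\end{itemize}

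The same example undercuts the paper's own justification of this step via \cite[Thm.~3.16]{Miasnikov} and \cite[p.~2526]{houcine2}. $B$ is an algebraic extension of $\langle A,b\rangle$ in the Miasnikov--Ventura--Weil sense, since $\langle A,b\rangle$ lies in no proper free factor. Yet $B\not\subseteq\mrm{acl}(Ab)$, so that notion is not model-theoretic algebraicity. Along this route, then, $(\star_3)$ would need a separate argument that does not pass through condition (2).
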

\begin{proof}
	Towards contradiction, we suppose that $T_{\mrm{fg}}$ is superstable. Then by Proposition~\ref{prop:star_1} it follows that $T_{\mrm{fg}}$ satisfies Condition \ref{def:good_completion+}$(\star_2)$.  By Lemma~\ref{lem:free} and Theorem~\ref{main_th2}, it is enough to verify that $T_{\mrm{fg}}$ satisfies the conditions from Definition~\ref{def_like_free_groups}.
	
	\smallskip \noindent To this end, we simply refer to key results from the literature. First, Condition \ref{def_like_free_groups}(1) follows from  the fundamental results from \cite{KM06,zsela}, free groups of rank $\geq 2$ belong to $\mrm{Mod}(T)$. Condition \ref{def_like_free_groups}(2) follows from \cite[Thm.~3.16]{Miasnikov} and the observation from \cite[p.~2526]{houcine2} that algebraic extensions from \cite{Miasnikov} correspond	essentially to the notion of algebraic closure restricted to quantifier-free formulas. Finally, Condition \ref{def_like_free_groups}(3) follows from \cite[Lem.~2.4]{Miasnikov} and Condition \ref{def_like_free_groups}(4) from  \cite[Thm.~4]{zsela} and \cite[Thm.~1.3]{Perin}
\end{proof}

	\begin{remark}
		We conclude the paper by mentioning another application of Theorem~\ref{main_th2}. Let $R$ be a principal ideal domain (PID) which is not left-perfect, and recall that by this latter fact the free $R$-modules satisfy the Construction Principle (cf.~also \ref{rings:RCP} and \cite[1.1, p.~193]{EM2}). We consider any $\mrm{AEC}$ $(\mathcal{K}, \leq_{\mrm{pp}})$ which contains the free $R$-modules and with the relation of pure submodule $\leq_{\mrm{pp}}$. Condition~\ref{def:good_completion+}$(\star_1)$ is clearly satisfied, and Condition ~\ref{def:good_completion+}$(\star_2)$ can be proved by reasoning similarly as we did in Proposition~\ref{prop:star_1}. Then, Condition~\ref{def:good_completion+}$(\star_3)$ and Condition~\ref{def:good_completion+}$(\star_4)$ can be verified by noticing that $(\mathcal{K}, \leq_{\mrm{pp}})$ exhibits the same properties isolated in \ref{def_like_free_groups}, but with respect to the relation $\leq_{\mrm{pp}}$ of pure submodule. 
		
		\smallskip \noindent In particular, \ref{def_like_free_groups}(2) holds by considering pure subclosures, while \ref{def_like_free_groups}(3) follows by the fact that $R$ is a PID, and thus projective $R$-modules are free. Finally, a version of \ref{def_like_free_groups}(4) holds because, whenever $A,B$ are free $R$-modules, then $A\fleq B$ entails $A\leq_{\mrm{pp}} B$ and, if moreover $A$ and $B$ are finitely generated, then $A\leq_{\mrm{pp}} B$ entails $A\fleq B$. This shows that Theorem~\ref{main_th2} provides yet another proof of Mazari-Armida's non-superstability result from \cite{armida3}, for the restricted setting of modules over non left-perfect PID.
	\end{remark}


\begin{thebibliography}{10}

	\bibitem{categoricity}
	J.~T. Baldwin
	\newblock {\em Categoricity}.
	\newblock Vol. 50. American Mathematical Society, Providence, 2009.
	
	\bibitem{Baldwin}
	J.~T. Baldwin, P. C. Eklof  and J. Trlifaj.
	\newblock {\em $N^\bot$ as an abstract elementary class}.
	\newblock Ann. Pure Appl. Logic {\bf 149.1-3} (2007), 25-39.
	
	\bibitem{Baldwin_Shelah}
	J. T. Baldwin and S. Shelah.
	\newblock {\em Examples of non-locality}.
	\newblock The Journal of Symbolic Logic, {\bf 73} (2008), no. 3, 765-782.
	
	\bibitem{Baudisch}
	A. Baudisch
	\newblock {\em On superstable groups}.
	\newblock J. Lond. Math. Soc. {\bf s2-42} (1990), no. 3, 452-464.
	
	\bibitem{boneyLmodule}
	W. Boney.
	\newblock {\em A Module-theoretic Introduction to Abstract Elementary Classes}.
	\newblock arXiv preprint arXiv:2503.105281.
	
	\bibitem{boney}
	W. Boney and S. Unger.
	\newblock {\em Large cardinal axioms from tameness in $\mrm{AECs}$}.
	\newblock Proc. Am. Math. Soc. {\bf 145}, (2017), No. 10, 4517--4532.
		
	\bibitem{boney_vasey}
	W. Boney and S. Vasey.
	\newblock {\em Tameness and frames revisited}.
	\newblock J. Symb. Log. {\bf 82} (2017), no. 3, 995-1021.
	
	\bibitem{burris}
	S. Burris and P. H. Sankappanavar.
	\newblock {\em A course in universal algebra}.
	\newblock Springer, New York, 1981. 
		
	\bibitem{carolillo_paolini}
	D. Carolillo and G. Paolini
	\newblock {\em The construction principle and non homogeneity of uncountable relatively free groups}.
	\newblock Arch. Math. Log. {\bf 64}, (2025), no. 7, 1181--1195.	
	
	\bibitem{Cohn}
	P. M. Cohn.
	\newblock {\em Rings of zero-divisors}.
	\newblock Proc. Am. Math. Soc. {\bf 9}, (2017), No. 6, 909-914.
		
	\bibitem{dahmani}
	F. Dahmani, V. Guirardel and D. Osin. 
	\newblock {\em 	Hyperbolically embedded subgroups and rotating families in groups acting on hyperbolic spaces}.
	\newblock Mem. Amer. Math. Soc. \textbf{245} (2017), no. 1156.

	\bibitem{EM}
	P. C. Eklof and A. H. Mekler.
	\newblock {\em Categoricity results for $L_{\infty, \kappa}$}. 
	\newblock Ann. Pure Appl. Logic {\bf 37.1} (1988), 81-99.
	
	\bibitem{EM2}
	P. C. Eklof and A. H. Mekler.
	\newblock {\em Almost free modules: Set-theoretic methods}.
	\newblock Elsevier, Amsterdam: North-Holland, 2002. 
	
		\bibitem{grossberg}
	R. Grossberg and S. Vasey,
	\newblock {\em 	Equivalent definitions of superstability in tame abstract elementary classes}.
	\newblock J. Symb. Log. \textbf{82} (2017), no. 4,  1387--1408.
	
	\bibitem{grossberg2}
	R. Grossberg and M. VanDieren,
	\newblock {\em 	Galois-stability for tame abstract elementary classes}.
	\newblock J. Math. Log. \textbf{6} (2016), no. 1,  25--48.
	
	
	\bibitem{houcine}
	A. Ould Houcine.
	\newblock {\em 	On superstable groups with residual properties}.
	\newblock Math. Log. Q. \textbf{53} (2007),
	no. 01, 19-26.

	\bibitem{houcine2}
	A. Ould Houcine, D. Vallino.
	\newblock {\em 	Algebraic and definable closure in free groups}.
	\newblock Ann. Inst. Fourier.  \textbf{66} (2016),
	no. 6, 2525-2563.


	\bibitem{HPQ}
	T. Hyttinen, G. Paolini and D. E. Quadrellaro.
	\newblock {\em A New Construction Principle}. 
	\newblock arXiv preprint arXiv:2505.10155.


	\bibitem{jarden}
	A. Jarden and S. Shelah.
	\newblock {\em Non-Forking Frames in Abstract Elementary Classes}.
	\newblock Ann. Pure Appl. Logic, 164:135-191, 2013.


	\bibitem{KM06}
	O. Kharlampovich and A. Myasnikov. 
	\newblock {\em Elementary theory of free non-abelian groups}.
	\newblock J. Algebra {\bf 302} (2006), no. 2, 451–552.

	\bibitem{armida1}
	M. Mazari-Armida.
	\newblock {\em Some stable non-elementary classes of modules}.
	\newblock J. Symb. Log. {\bf 88} (2023), no. 1, 93-117.
	
	\bibitem{armida2}
	M. Mazari-Armida.
	\newblock {\em Characterizing categoricity in several classes of modules}.
	\newblock J. Algebra {\bf 617} (2023), no. 1, 382-401.
	
	\bibitem{armida3}
	M. Mazari-Armida.
	\newblock {\em On superstability in the class of flat modules and perfect rings}.
	\newblock Proc. Am. Math. Soc. {\bf 149} (2021), no. 6, 2639--2654.
	
	\bibitem{armida4}
	M. Mazari-Armida.
	\newblock {\em Superstability, Noetherian rings and pure-semisimple rings}.
	\newblock  Ann. Pure Appl. Logic, {\bf 172} (2021), no. 3. 
	
	\bibitem{Armida_ros}
	M. Mazari-Armida and J. Rosick\'y.
	\newblock {\em Relative injective modules, superstability and noetherian categories}.
	\newblock J. Math. Log. \emph{to appear}.
	
	\bibitem{mekler}
	A. H. Mekler.
	\newblock {\em How to construct almost free groups}. 
	\newblock Can. J. Math. {\bf 32} (1980), no. 5, 1206-1228.
	
	\bibitem{mekler2}
	A. H. Mekler.
	\newblock {\em Almost-free groups in varieties}. 
	\newblock J. Algebra {\bf 145} (1992), no. 1, 128-142.
	
	\bibitem{MSh}
	A. H. Mekler and S. Shelah.
	\newblock {\em $L_{\infty, \omega}$-free algebras}. 
	\newblock Algebra Univers.  {\bf 26} (1989), 351-366.
	
	\bibitem{MSh2}
	A. H. Mekler and S. Shelah.
	\newblock {\em The consistency strength of “every stationary set reflects''}. 
	\newblock Isr. J. Math.  {\bf 67} (1989), no.~3, 353-366.
	
	\bibitem{MSh3}
	A. H. Mekler and S. Shelah.
	\newblock {\em Almost free algebras}. 
	\newblock Isr. J. Math.  {\bf 89} (1995), 237-259.

	
	\bibitem{Miasnikov}
	A. Miasnikov, E. Ventura and P. Weyl.
	\newblock {\em Algebraic extensions in free groups}. 
	\newblock Geometric Group Theory: Geneva and Barcelona Conferences, pp. 225-253, Birkhäuser, Basel.
				
				
	\bibitem{MPS22}
	B. M\"uhlherr, G. Paolini, and S. Shelah. 
	\newblock {\em First-order aspects of Coxeter groups}. 
	\newblock J. Algebra {\bf 595} (2022), 297–346.


	\bibitem{Perin}
	C. Perin. 
	\newblock {\em Elementary embeddings in torsion-free hyperbolic groups}. 
	\newblock Ann. Sci. Éc. Norm. Supé {\bf 44} (2011), no. 4, 631-681


	\bibitem{Poizat}
	B. Poizat.
	\newblock Groupes stables, avec types generiques reguliers
	\newblock J. Symb. Log. {\bf 48} (1983), no. 2, 339-355.
	
	\bibitem{zsela}
	Z. Sela.
	\newblock {\em Diophantine geometry over groups VI: The elementary theory of a free group}. 
	\newblock Geometric \& Functional Analysis {\bf 16} (2006), no. 3, 707-730.
	
	\bibitem{Sh}
	S. Shelah.
	\newblock {\em Classification theory: and the number of non-isomorphic models}.
	\newblock Studies in Logic and the Foundations of Mathematics, Vol. 92,  North-Holland Publishing Company, Amsterdam, 1978. 


	\bibitem{Sh2}
	S. Shelah.
	\newblock {\em Classification theory for abstract elementary classes. Volume 1}.
	\newblock Studies in Logic: Mathematical logic and foundations, College Publications, London, 2009.


	\bibitem{SaTr}
	J. Šaroch and J. Trlifaj.
	\newblock {\em Deconstructible abstract elementary classes of modules and categoricity}. 
	\newblock  Bull. Lond. Math. Soc. 56.12 (2024): 3854-3866.
		
	\bibitem{tent_book}
	K. Tent and M. Ziegler.
	\newblock {\em A course in model theory}.
	\newblock Cambridge University Press, Cambridge, 2012.
	
	\bibitem{Trlifaj}
	J. Trlifaj.
	\newblock {\em Abstract elementary classes induced by tilting and cotilting modules have finite character}.
	\newblock Proc. Am. Math. Soc. {\bf 137} (2009), no. 3, 1127-1133.

	
\end{thebibliography}
\end{document}